\providecommand{\keywords}[1]{\textbf{\textit{Keywords.}} #1}
\providecommand{\AMSclass}[1]{\textbf{\textit{AMS classification.}} #1}
\title{Hopf algebraic structures on hypergraphs and multi-complexes}
\date{}
\author{Lo\"ic Foissy}
\affil{\small{Univ. Littoral Côte d'Opale, UR 2597
LMPA, Laboratoire de Mathématiques Pures et Appliquées Joseph Liouville
F-62100 Calais, France}.\\ Email: \texttt{foissy@univ-littoral.fr}}
\newcommand{\tun}{\begin{tikzpicture}[line cap=round,line join=round,>=triangle 45,x=0.5cm,y=0.5cm]
\clip(-0.2,-0.1) rectangle (0.2,0.2);
\begin{scriptsize}
\draw [fill=black] (0.,0.) circle (1pt);
\end{scriptsize}
\end{tikzpicture}}
\newcommand{\tdeux}{\begin{tikzpicture}[line cap=round,line join=round,>=triangle 45,x=0.5cm,y=0.5cm]
\clip(-.2,-.1) rectangle (0.2,0.7);
\draw [line width=.5pt] (0.,0.5)-- (0.,0.);
\begin{scriptsize}
\draw [fill=black] (0.,0.) circle (1pt);
\draw [fill=black] (0.,0.5) circle (1pt);
\end{scriptsize}
\end{tikzpicture}}
\theoremstyle{plain}
\newtheorem{theo}{Theorem}[section]
\newtheorem{lemma}[theo]{Lemma}
\newtheorem{cor}[theo]{Corollary}
\newtheorem{prop}[theo]{Proposition}
\newtheorem{defi}[theo]{Definition}
\theoremstyle{remark}
\newtheorem{remark}{Remark}[section]
\newtheorem{notation}{Notations}[section]
\newtheorem{example}{Example}[section]
\newcommand{\K}{\mathbb{K}}
\newcommand{\N}{\mathbb{N}}
\newcommand{\Z}{\mathbb{Z}}
\newcommand{\calG}{\mathscr{G}}
\newcommand{\bfG}{\mathbf{G}}
\newcommand{\id}{\mathrm{Id}}
\newcommand{\bfP}{\mathbf{P}}
\newcommand{\bfQ}{\mathbf{Q}}
\newcommand{\eq}{\mathcal{E}}
\newcommand{\cl}{\mathrm{cl}}
\newcommand{\sym}{\mathfrak{S}}
\newcommand{\Char}{\mathrm{Char}}
\newcommand{\tdelta}{\tilde{\Delta}}
\newcommand{\vect}{\mathrm{Vect}}
\renewcommand{\ker}{\mathrm{Ker}}
\newcommand{\prim}{\mathrm{Prim}}
\newcommand{\cc}{\mathrm{cc}}
\newcommand{\calF}{\mathcal{F}}
\newcommand{\Ehr}{\mathrm{Ehr}}
\newcommand{\calH}{\mathcal{H}}
\newcommand{\bfH}{\mathbf{H}}
\newcommand{\coinv}{\mathrm{coInv}}
\newcommand{\calA}{\mathcal{A}}
\newcommand{\calB}{\mathcal{B}}
\newcommand{\calP}{\mathcal{P}}
\newcommand{\calN}{\mathcal{N}}
\newcommand{\supp}{\mathrm{supp}}
\newcommand{\bfMC}{\mathbf{MC}}
\newcommand{\calMC}{\mathcal{MC}}
\begin{document}

\maketitle

\begin{abstract}
Using the formalism of species and twisted objects, we introduce two structures of cointeracting bialgebras on hypergraphs,
induced by two notions of induced sub-hypergraphs. We study the associated unique morphisms of cointeracting bialgebras
from hypergraphs to the polynomial algebra in one indeterminate: in the first case, this gives the chromatic polynomial of a graph attached to the considered hypergraph.
In the second case, we obtained Helgason's notion of chromatic polynomial of a hypergraph.
We obtain Hopf-algebraic proves of results about the values of this chromatic polynomial in $-1$ or
about its coefficients, with the help of the action of a monoid of characters. 
This allows to give multiplicity-free formulas for the antipodes of these objects, using various notions of
acyclic orientations of hypergraphs. 

Mixing the two notions of induced sub-hypergraphs, we obtain a third Hopf algebra, firstly described by Aguiar and Ardila.
We obtain negative results on the existence of a second coproduct making it a cointeracting bialgebra. 
Anyway, it is still possible to obtain a polynomial invariant from this structure, which is the chomatic polynomial
described by Aval, Kharagbossian and Tanasa. 

We finally study Iovanov and Jaiung's  Hopf algebra of multi-complexes, making it a cointeracting bialgebra
which has for quotient one of the preceding cointeracting bialgebras of hypergraphs.
\end{abstract}

\keywords{double bialgebra; hypergraphs; chromatic polynomial; acyclic orientations; multi-complexes}\\

\AMSclass{16T05 16T30 05C15 05C25}
\tableofcontents

\section*{Introduction}

Hypergraphs (a name due to Claude Berge in the sixties) are generalisations of graphs, 
where edges can contain an arbitrary number of vertices. A lot of classical notions on graphs can be extended to hypergraphs:
sub-hypergraphs, colourings, orientations, and so on, see for example \cite{Berge89,Bretto2013,Butjas2015,Helgason74,Tomescu98,Voloshin2009}.
We are here interested in Hopf-algebraic aspects of hypergraphs, with applications to colouring and orientations in the spirit of the results obtained in \cite{Foissy36} for graphs.\\

We firstly construct four graded and connected Hopf algebras of hypergraphs, all based on the space $\calF[\bfH]$ generated by
the isoclasses of hypergraphs. For convenience, we choose to work in the framework
of species \cite{Joyal1981,Joyal1986}, instead of "classical" Hopf algebras, which are obtained by application of the bosonic
Fock functor defined by Aguiar and Mahajan \cite{Aguiar2010}. Note that coloured versions of these Hopf algebras can also be obtained by the application of
coloured Fock functors \cite{Foissy41}. For all these Hopf algebras, the product is the disjoint union of hypergraphs.
The coproducts are based on two different notions of induced sub-hypergraphs: if $G$ is a hypergraph and 
$I$ is a subset of the set of vertices of $G$, the edges  of the induced sub-hypergraph $G_{\mid_\subset I}$ are the edges of $G$ included in $I$,
 whereas the edges of the induced sub-hypergraph $G_{\mid_\cap I}$ are the intersections of 
edges of $G$ with $I$. This allows to define four coproducts: for $(\leftthreetimes,\rightthreetimes)\in \{\subset,\cap\}^2$,
the coproduct $\Delta^{(\leftthreetimes,\rightthreetimes)}$ is given on any hypergraph $G$ by
\[\Delta^{(\leftthreetimes,\rightthreetimes)}(G)=\sum_{I\subset V(G)} G_{\mid_\leftthreetimes I}\otimes
G_{\mid_\rightthreetimes V(G)\setminus I},\]
where $V(G)$ is the set of vertices of $G$. For example, denoting by $T_n$ the hypergraph with $n$ vertices and a unique 
edge containing all its vertices,
\begin{align*}
\Delta^{(\subset,\subset)}(T_n)&=T_n\otimes 1+1\otimes T_n+\sum_{k=1}^{n-1}\binom{n}{k} T_1^k\otimes T_1^{n-k},\\
\Delta^{(\cap,\cap)}(T_n)&=T_n\otimes 1+1\otimes T_n+\sum_{k=1}^{n-1}\binom{n}{k} T_k\otimes T_{n-k},\\
\Delta^{(\cap,\subset)}(T_n)&=T_n\otimes 1+1\otimes T_n+\sum_{k=1}^{n-1}\binom{n}{k} T_k\otimes T_1^{n-k},\\
\Delta^{(\subset,\cap)}(T_n)&=T_n\otimes 1+1\otimes T_n+\sum_{k=1}^{n-1}\binom{n}{k} T_1^k\otimes T_{n-k}.
\end{align*}
The two coproducts $\Delta^{(\subset,\subset)}$ and $\Delta^{(\cap,\cap)}$ are cocommutative,
whereas $\Delta^{(\cap,\subset)}$ and $\Delta^{(\subset,\cap)}$ are opposite one from the other. 
All these Hopf algebras contain the Hopf algebra of graphs of \cite{Foissy36}. 
Up to a quotient, $\Delta^{(\subset,\subset)}$ and $\Delta^{(\subset,\cap)}$ appear in the recent paper \cite{Ebrahimi-Fard2022},
under the notations $\Delta$ and $\Delta'$. 
The coproduct $\Delta^{(\subset,\cap)}$ is introduced in \cite{Aguiar2017} and studied in \cite{Aval2019}.

We then define two other coproducts $\delta^{(\subset)}$ and $\delta^{(\cap)}$ of contractions and extractions.
This is done with the formalism of contraction-extraction coproducts exposed in \cite{Foissy41}.
If $G$ is a hypergraph and $\sim$ is an equivalence on $V(G)$, we denote by $G/\sim$ the hypergraph
which set of vertices is $V(G)/\sim$ and which edges are the nontrivial $\pi_\sim(e)$ for $e$ edge of $G$, 
where $\pi_\sim:V(G)\longrightarrow V(G)/\sim$ is the canonical surjection.
For $\leftthreetimes \in \{\subset,\cap\}$, we denote by $G\mid_\leftthreetimes \sim$ the disjoint union of
induced subgraphs $G_{\mid_\leftthreetimes C}$ with $C\in V(G)/\sim$. We finally shall write that $\sim\in \eq_\leftthreetimes[G]$
if each $G_{\mid_\leftthreetimes C}$ is a connected hypergraph. We then can define
\[\delta^{(\leftthreetimes)}(G)=\sum_{\sim\in \eq_\leftthreetimes[G]} G/\sim\otimes G\mid_\leftthreetimes \sim.\]
For example, if $n\geq 2$,
\begin{align*}
\delta^{(\subset)}(T_n)&=T_n\otimes T_1^n+T_1 \otimes T_n,\\
\delta^{(\cap)}(T_n)&=\sum_{n=1k_1+\ldots+nk_n} \frac{n!}{1!^{k_1}\ldots n!^{k_n} k_1!\ldots k_n!}
T_{k_1+\ldots+k_n}\otimes T_1^{k_1}\ldots T_n^{k_n}.
\end{align*}
We then obtain a double bialgebra $(\calF[\bfH],m,\Delta^{(\leftthreetimes,\leftthreetimes)},\delta^{(\leftthreetimes)})$,
that is to say:
\begin{itemize}
\item $(\calF[\bfH],m,\delta^{(\leftthreetimes)})$ is a bialgebra.
\item $(\calF[\bfH],m,\Delta^{(\leftthreetimes,\leftthreetimes)})$ is a bialgebra in the category of right comodules 
over the bialgebra $(\calF[\bfH],m,\delta^{(\leftthreetimes)})$, with the coaction $\delta^{(\leftthreetimes)}$. 
\end{itemize}
In particular, this implies the compatibility
\[(\Delta^{(\leftthreetimes,\leftthreetimes)}\otimes \id)\circ \delta^{(\leftthreetimes)}
=m_{1,3,24}\circ (\delta^{(\leftthreetimes)}\otimes \delta^{(\leftthreetimes)})\circ \Delta^{(\leftthreetimes,\leftthreetimes)},\]
where
\[m_{1,3,24}:\left\{\begin{array}{rcl}
\calF[\bfH]^{\otimes 4}&\longrightarrow&\calF[\bfH]^{\otimes 3}\\
a_1\otimes a_2\otimes a_3\otimes a_4&\longmapsto&a_1\otimes a_3\otimes a_2a_4.
\end{array}\right.\]
The coproduct $\delta^{(\subset)}$ is different from the coproduct $\delta$ of \cite{Ebrahimi-Fard2022}, the difference coming from
a different gestion of the contractions, seen as equivalences on the set of vertices here, and seen as contractions of edges in \cite{Ebrahimi-Fard2022}.
We did not find a convenient second coproduct for $\Delta^{(\subset,\cap)}$, but we have negative results about it (Proposition \ref{prop2.6} and Corollary \ref{cor2.7}).\\

These results have interesting consequences. Let us fix  $\leftthreetimes \in \{\subset,\cap\}$. A polynomial invariant of $\calF[\bfH]$ is any Hopf algebra morphism from 
$(\calF[\bfH],m,\Delta^{(\leftthreetimes,\leftthreetimes)})$ to $(\K[X],m,\Delta)$, where $\Delta$ is the coproduct defined by
\[\Delta(X)=X\otimes 1+1\otimes X.\]
The following results have been proved in \cite{Foissy36,Foissy37,Foissy40}:
\begin{enumerate}
\item There exists a unique polynomial invariant $P_\leftthreetimes$, that is to say a map from $\calF[\bfH]$ to $\K[X]$,
 which is also compatible with both bialgebraic structures, the second coproduct of $\K[X]$ being defined by
 $\delta(X)=X\otimes X$. 
\item Moreover, any polynomial invariant can be obtained from $P_\leftthreetimes$ by an action $\leftsquigarrow$ 
of the monoid $\Char(\calF[\bfH])$ of characters of the bialgebra $(\calF[\bfH],m,\delta^{(\leftthreetimes)})$. Denoting by
$\calP_\leftthreetimes$ the set of polynomial invariants of $(\calF[\bfH],m,\Delta^{(\leftthreetimes,\leftthreetimes)})$,
the following maps are two bijections, inverse one from the other:
\begin{align*}
&\left\{\begin{array}{rcl}
\Char(\calF[\bfH])&\longrightarrow&\calP_\leftthreetimes\\
\lambda&\longmapsto&P_\leftthreetimes \leftsquigarrow_\leftthreetimes \lambda=(P_\leftthreetimes \otimes \lambda)\circ \delta^{(\leftthreetimes)},
\end{array}\right. \\
&\left\{\begin{array}{rcl}
\calP_\leftthreetimes&\longrightarrow&\Char(\calF[\bfH])\\
\phi&\longmapsto&\left\{\begin{array}{rcl}
\calF[\bfH]&\longrightarrow&\K\\
G&\longmapsto&\phi(G)(1).
\end{array}\right.\end{array}\right.
\end{align*} 
\item The antipode of $(\calF[\bfH],m,\Delta^{(\leftthreetimes,\leftthreetimes)})$, denoted by $S_\leftthreetimes$,
is given by
\[S_\leftthreetimes=({P_\leftthreetimes}_{\mid X=-1}\otimes \id)\circ \delta^{(\leftthreetimes)}.\]
\end{enumerate}
We prove in Proposition \ref{prop2.1} that, if $\leftthreetimes=\subset$, for any hypergraph $G$, $P_\subset(G)$ is a polynomial such that for any $N\geq 0$,
$P_\subset(G)(N)$ is the number of $N$-colourings of $G$, that is to say maps $f:V(G)\longrightarrow\{1,\ldots,N\}$,
such that $f$ is not constant on any non trivial edge of $G$; if $\leftthreetimes=\cap$, for any hypergraph $G$, $P_\cap(G)$ is a polynomial such that for any $N\geq 0$,
$P_\cap(G)(N)$ is the number of $N$-colourings of $G$ 
such that $f$ is injective on any edge of $G$. 
The polynomial $P_\cap(G)$ is in fact the chromatic polynomial of the graph $\Gamma(G)$ obtained by replacing any hyperdge of $G$ 
by a complete graph with the same set of vertices.
The polynomial $P_\subset(G)$ is generally not the chromatic polynomial of a graph.
It seems that its first appearance can be found in \cite{Helgason74}, see also \cite{Borowiecki2000,Dohmen95,Tomescu98,ZhangDong2017}.
It is denoted by $\chi_{E,V}$ in \cite{Ebrahimi-Fard2022}. This method cannot be applied to $(\calF[\bfH],m,\Delta^{(\cap,\subset)})$ by lack of the second coproduct.
Anyway, it is still possible to define a chromatic polynomial invariant $P_{\cap,\subset}$, which plays the role of the $P_\leftthreetimes$
in the sense that for any hypergraph $G$,
\[P_{\cap,\subset}(G)(1)=P_\subset(G)(1)=P_\cap(G)(1)=\begin{cases}
1\mbox{ if $G$ has no non trivial edge},\\
0\mbox{ otherwise}.
\end{cases}\]
We prove in Proposition \ref{prop2.1} that this invariant counts the number of colourings $f$ such that on any edge of $G$, 
the maximum of $f$ is obtained exactly one time:
this is the chromatic polynomial of \cite{Aval2020,Aval2019}.  \\ 

In order to find the antipode, we need to consider values of $P_\leftthreetimes$ at $-1$. 
As for graphs (Stanley's theorem), this is related to acyclic orientations. Here, an acyclic orientation
is a partial quasi-order on the vertices, satisfying certain conditions, see Definition \ref{defi2.8}. We then obtain interpretations of $P_\subset(G)(-1)$ and $P_\cap(G)(-1)$ 
in terms of these orientations (Theorem \ref{theo2.10}), and this is used to give explicit formulas for the antipodes 
of $(\calF[\bfH],m,\Delta^{(\cap,\cap)})$ and $(\calF[\bfH],m,\Delta^{(\subset,\subset)})$, see Corollary \ref{cor2.13}.
A combinatorial interpretation of $P_{\cap,\subset}(G)(-1)$ is also given in Theorem \ref{theo2.10}
and the antipode of $(\calF[\bfH],m,\Delta^{(\cap,\subset)})$ is described, with the help of Takeuchi's formula, in proposition \ref{prop2.15}. 

Using the inverse of a particular character, we give a new proof of a formula on the coefficients of the chromatic polynomial 
$P_\subset(G)$, which can be found in \cite{Tomescu98,ZhangDong2017}, see Proposition \ref{prop2.19}.
We also give some results on decorated versions of $\calF[\bfH]$, where the space of decorations is taken into a commutative and cocommutative bialgebra.
This allows to replace $\K[X]$ by a quasishuffle algebra, and the unique double bialgebra morphism replacing the chromatic polynomials are described in Propositions \ref{prop2.20}
and \ref{prop2.21}. They are also based on colourings of graphs. \\

The last section of this text is devoted to multi-complexes. These objects, introduced in \cite{Iovanov2022}, generalize graphs,
multigraphs, hypergraphs, $\Delta$-complexes, and simplicial complexes. We prove that the bialgebraic structure
of \cite{Iovanov2022} can be extended to a double bialgebra structure, and that $(\calF[\bfH],m,\Delta^{(\subset,\subset)}, \delta^{(\subset)})$ is a quotient of this structure. Consequently, the unique polynomial invariant of multi-complexes
compatible with both coproducts factorizes through the chromatic polynomial $P_\subset$ of the underlying hypergraphs,
which allows to give formulas for the antipode and the eulerian projector for mutli-complexes.\\

\textbf{Acknowledgements}. 
The author acknowledges support from the grant ANR-20-CE40-0007
\emph{Combinatoire Algébrique, Résurgence, Probabilités Libres et Opérades}.\\

\begin{notation} \begin{enumerate}
\item We denote by $\K$ a commutative field of characteristic zero. Any vector space in this field will be taken over $\K$.
\item For any $N\in \N$, we denote by $[N]$ the set $\{1,\ldots,N\}$. In particular, $[0]=\emptyset$.
\item If $(C,\Delta)$ is a (coassociative but not necessarily counitary) coalgebra, 
we denote by $\Delta^{(n)}$ the $n$-th iterated coproduct of $C$:
$\Delta^{(1)}=\Delta$ and if $n\geq 2$,
\[\Delta^{(n)}=\left(\Delta \otimes \id^{\otimes (n-1)}\right)\circ \Delta^{(n-1)}:C\longrightarrow C^{\otimes (n+1)}.\]
\item If $(B,m,\Delta)$ is a bialgebra of unit $1_B$ and of counit $\varepsilon_B$, let us denote by $B_+=\ker(\varepsilon_B)$ 
its augmentation ideal. We define a coproduct on $B_+$ by
\begin{align*}
&\forall x\in B_+,&\tdelta(x)=\Delta(x)-x\otimes 1_B-1_B\otimes x.
\end{align*}
Then $(B_+,\tdelta)$ is a coassociative (not necessarily counitary) coalgebra. 
\item Let $\bfP$ be a species. For any finite set $X$, the vector space associated to $X$ by $\bfP$ is denoted by $\bfP[X]$.
For any bijection $\sigma:X\longrightarrow Y$  between two finite sets, the linear map associated to $\sigma$ by $\bfP$
is denoted by $\bfP[\sigma]:\bfP[X]\longrightarrow \bfP[Y]$. The Cauchy tensor product of species is denoted by $\otimes$:
if $\bfP$ and $\bfQ$ are two species, for any finite set $X$,
\[\bfP\otimes \bfQ[X]=\bigoplus_{X=Y\sqcup Z} \bfP[Y]\otimes \bfQ[Z].\]
If $\sigma:X\longrightarrow Y$  is a bijection between two finite sets, then 
\[\bfP\otimes \bfQ[\sigma]=\bigoplus_{X=Y\sqcup Z} \bfP[\sigma_{\mid Y}]\otimes \bfQ[\sigma_{\mid Z}].\]
A twisted algebra (resp. coalgebra, bialgebra) is an algebra (resp. coalgebra, bialgebra) in the symmetric monoidal category
of species with the Cauchy tensor product. We refer to \cite{Foissy39,Foissy41} for details and notations on algebras,
coalgebras and bialgebras in the category of species.
\item Let $V$ be a vector space. The $V$-coloured Fock functor $\calF_V$, defined in \cite[Definition 3.2]{Foissy41},
 sends any species $\bfP$ to
\begin{align*}
\calF_V[\bfP]&=\bigoplus_{n=0}^\infty \coinv(V^{\otimes n}\otimes \bfP[n])\\
&=\bigoplus_{n=0}^\infty \frac{V^{\otimes n}\otimes \bfP[n]}
{\vect(v_1\ldots v_n \otimes \bfP[\sigma](p)-v_{\sigma(1)}\ldots v_{\sigma(n)}\otimes p\mid
\sigma\in \sym_n,\: p\in \bfP[n],\:v_1,\ldots,v_n\in V)}\\
&=V^{\otimes n}\otimes_{\sym_n} \bfP[n].
\end{align*}
When $V=\K$, we obtain the bosonic Fock functor of \cite{Aguiar2010}:
\[\calF[\bfP]=\bigoplus_{n=0}^\infty \coinv(\bfP[n])
=\bigoplus_{n=0}^\infty \frac{\bfP[n]}{\vect(\bfP[\sigma](p)-p\mid \sigma\in \sym_n,\: p\in \bfP[n])}.\]
\end{enumerate}\end{notation}

\section{Twisted bialgebras of hypergraphs}

\subsection{Definitions}

\begin{defi}\label{defi1.1}
A hypergraph is a family $G=(V(G),E(G))$, where $V(G)$ is a finite set, called the set of vertices of $G$,
and $E(G)$ is a subset of $\mathcal{P}(V(G))$, called the set of edges of $G$.  
For the sake of simplicity, for any hypergraph $G$ we shall consider in this article, we shall assume that:
\begin{itemize}
\item $\emptyset \in E(G)$.
\item For any $x\in V(G)$, $\{x\}\in E(G)$.
\end{itemize}
If $G$ is a hypergraph, we shall denote the set of its nontrivial edges by
\[E^+(G)=\{e\in E(G)\mid |e|\geqslant 2\}.\]
Under our assumption,
\[E(G)=E^+(G)\sqcup\{\emptyset\}\sqcup \{\{x\},\: x\in V(G)\}.\]
If $I$  is a finite set, we shall denote by $\calH[I]$ the set of hypergraphs $G$ such that $V(G)=I$. 
This defines a  set species $\calH$. The linearization of this set species is denoted by $\bfH$: for any finite set $I$,
\[\bfH[I]=\vect(\calH[I]).\]
\end{defi}

\begin{remark} \label{rk1.1} \begin{enumerate}
\item A (simple) graph is a hypergraph $G$ such that for any $e\in E^+(G)$, $|e|=2$. This defines 
a set subspecies of $\calH$ denoted by $\calG_s$ and a subspecies of $\bfH$ denoted by $\bfG_s$. 
This species of graphs and its bialgebraic structures are studied in \cite{Foissy40,Foissy41,Foissy43}. 
\item  If $I$ is a finite set of cardinality $n$, then
\[|\calH[I]|=2^{\displaystyle \sum_{k=2}^n \binom{n}{k}}=2^{2^n-n-1}.\]
This is the de Bruijn's sequence, entry A016031 in the OEIS \cite{Sloane}. 
\[\begin{array}{|c||c|c|c|c|c|c|}
\hline |I|&1&2&3&4&5&6\\
\hline \hline |\calH[I]|&
1&2&16&2048&67108864&144115188075855872\\
\hline\end{array}\]
The following is the number $h_n$ of isoclasses of hypergraphs according to the number of vertices $n$:
\[\begin{array}{|c||c|c|c|c|c|c|}
\hline n&1&2&3&4&5&6\\
\hline \hline h_n&1&2&8&180&612032&200253854316544\\
\hline\end{array}\]
This is sequence A317794 of the OEIS \cite{Sloane}. 
\end{enumerate}\end{remark}

\subsection{Twisted bialgebras of hypergraphs}

Let us now define four twisted bialgebra structures on $\bfH$, with the help of two different notions of induced sub-hypergraphs.

\begin{notation}
\begin{enumerate}
\item Let $G\in \calH[X]$ and $I\subseteq X$. 
\begin{enumerate}
\item $G_{\mid_\subset I}$ is the hypergraph such that 
\begin{align*}
V(G_{\mid_\subset I})&=I,&E(G_{\mid_\subset I})&=\{e\in E(G)\mid e\subset I\}.
\end{align*}
\item $G_{\mid_\cap I}$ is the hypergraph such that 
\begin{align*}
V(G_{\mid_\cap I})&=I,&E(G_{\mid_\cap I})&=\{e\cap I\mid e\in E(G)\}.
\end{align*}
\end{enumerate}
Thanks to the conditions we imposed on hypergraphs, both $G_{\mid_\subset I}$ and $G_{\mid_\cap I}$ belong to $\calH[I]$.
\item Let $X$ and $Y$ be two disjoint sets, $G\in \calH[X]$ and $G'\in \calH[Y]$. 
Then $GG'$ is the hypergraph such that
\begin{align*}
V(GG')&=X\sqcup Y,&E(GG')&=E(G)\sqcup E(G').
\end{align*}
This defines an element of $\calH[X\sqcup Y]$. 
\end{enumerate}
\end{notation}

\begin{lemma} \label{lemmecombi}
Let $\leftthreetimes,\rightthreetimes\in \{\cap,\subset\}$. 
\begin{enumerate}
\item Let $G$ be a hypergraph and $X\subseteq Y\subseteq G$. Then
\[(G_{\mid_\leftthreetimes Y})_{\mid_\leftthreetimes X}=G_{\mid_\leftthreetimes X}.\]
\item Let $G$ be a hypergraph and $I,J,K\subseteq V(G)$ such that $V(G)=I\sqcup J\sqcup K$. Then
\[(G_{\mid_\leftthreetimes J\sqcup K})_{\mid_\rightthreetimes J}=(G_{\mid_\rightthreetimes I\sqcup J})_{\mid_\leftthreetimes J}.\]
\end{enumerate}
\end{lemma}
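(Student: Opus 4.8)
The plan is to verify both identities directly from the definitions of the two restriction operations, treating the four combinations of $(\leftthreetimes,\rightthreetimes)\in\{\subset,\cap\}^2$. Both statements concern hypergraphs with the same vertex set on each side (part 1 has vertex set $X$, part 2 has vertex set $J$), so the only thing to check is that the edge sets agree. I would organise the argument around the following elementary observations about subsets: for edges $e\subseteq V(G)$ and nested sets $X\subseteq Y$, one has $e\subseteq X \iff (e\subseteq Y \text{ and } e\subseteq X)$ and, for the $\cap$ case, $(e\cap Y)\cap X = e\cap X$ when $X\subseteq Y$. These two facts are the combinatorial heart of part 1.

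For part 1, I would split into the two cases $\leftthreetimes=\subset$ and $\leftthreetimes=\cap$. When $\leftthreetimes=\subset$: $E((G_{\mid_\subset Y})_{\mid_\subset X})=\{e\in E(G_{\mid_\subset Y})\mid e\subseteq X\}=\{e\in E(G)\mid e\subseteq Y,\ e\subseteq X\}=\{e\in E(G)\mid e\subseteq X\}$, using $X\subseteq Y$. When $\leftthreetimes=\cap$: $E((G_{\mid_\cap Y})_{\mid_\cap X})=\{f\cap X\mid f\in E(G_{\mid_\cap Y})\}=\{(e\cap Y)\cap X\mid e\in E(G)\}=\{e\cap X\mid e\in E(G)\}$, again using $X\subseteq Y$ so that $(e\cap Y)\cap X=e\cap X$. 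In both cases the resulting edge set is exactly $E(G_{\mid_\leftthreetimes X})$.

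For part 2, with $V(G)=I\sqcup J\sqcup K$, I would compute $E\bigl((G_{\mid_\leftthreetimes J\sqcup K})_{\mid_\rightthreetimes J}\bigr)$ and $E\bigl((G_{\mid_\rightthreetimes I\sqcup J})_{\mid_\leftthreetimes J}\bigr)$ and show both equal a single symmetric description depending on $(\leftthreetimes,\rightthreetimes)$. The cleanest route is a case analysis on the unordered pair $\{\leftthreetimes,\rightthreetimes\}$: if both are $\subset$ this reduces to part 1 (with $X=J\subseteq J\sqcup K$ and $X=J\subseteq I\sqcup J$ respectively), and similarly if both are $\cap$; the only genuinely new case is $\{\leftthreetimes,\rightthreetimes\}=\{\subset,\cap\}$, where one side is $(G_{\mid_\subset J\sqcup K})_{\mid_\cap J}$ with edge set $\{e\cap J\mid e\in E(G),\ e\subseteq J\sqcup K\}$, and the other is $(G_{\mid_\cap I\sqcup J})_{\mid_\subset J}$ with edge set $\{e\cap(I\sqcup J)\mid e\in E(G),\ e\cap(I\sqcup J)\subseteq J\}$. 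One checks these coincide: $e\cap(I\sqcup J)\subseteq J$ forces $e\cap I=\emptyset$, i.e. $e\subseteq J\sqcup K$, and then $e\cap(I\sqcup J)=e\cap J$; conversely if $e\subseteq J\sqcup K$ then $e\cap J=e\cap(I\sqcup J)\subseteq J$. Both orderings of the pair occur on the two sides, so handling $\{\subset,\cap\}$ once (reading the equality in both directions) suffices.

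The main obstacle, if any, is purely bookkeeping: being careful that in the composite restrictions the inner restriction changes the ambient vertex set (from $V(G)$ to $J\sqcup K$ or $I\sqcup J$), so that when one applies $G_{\mid_\cap J}$ in the second step the edges being intersected with $J$ are already-intersected edges $e\cap Y$, not the original edges $e$. Once this is tracked correctly, every case collapses to one of the two subset identities stated above. I would also note at the start that there is nothing to prove about vertex sets, since both sides of each identity are hypergraphs on $X$ (resp. $J$) by construction, and about the imposed conditions ($\emptyset$ and singletons being edges), which are preserved by both restriction operations and hence need no separate verification.
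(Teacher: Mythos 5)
Your proof is correct and follows essentially the same route as the paper: compare edge sets directly, reduce the case $\leftthreetimes=\rightthreetimes$ to part 1, and settle the mixed case by identifying both edge sets with $\{e\cap J\mid e\in E(G),\ e\cap I=\emptyset\}$. One small precision: the remaining mixed case $(\leftthreetimes,\rightthreetimes)=(\cap,\subset)$ is not literally the same equality read backwards, since it involves the composites $(G_{\mid_\cap J\sqcup K})_{\mid_\subset J}$ and $(G_{\mid_\subset I\sqcup J})_{\mid_\cap J}$ rather than the two you computed; it is obtained from your case by exchanging $I$ and $K$ (which is exactly how the paper dispatches it), and your computation does carry over verbatim under that swap, yielding the common edge set $\{e\cap J\mid e\in E(G),\ e\cap K=\emptyset\}$.
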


\begin{proof}
1. We first consider the case $\leftthreetimes=\cap$. Then
\begin{align*}
E((G_{\mid_\cap Y})_{\mid_\cap X})&=\{e\cap X\mid e\in E(G_{\mid_\cap Y})\}\\
&=\{e\cap Y\cap X\mid e\in E(G)\}\\
&=\{e\cap X\mid e\in E(G)\}\\
&=E(G_{\mid_\cap X}).
\end{align*}
So $(G_{\mid_\cap Y})_{\mid_\cap X}=G_{\mid_\cap X}$. Let us the consider the case $\leftthreetimes=\subset$.
\begin{align*}
E((G_{\mid_\subset Y})_{\mid_\subset X})&=\{e \in E(G_{\mid_\subset Y})\mid e\subset X\}\\
&=\{e \in E(G)\mid e\subset X,\: e\subset Y\}\\
&=\{e \in E(G)\mid e\subset X\}\\
&=E(G_{\mid_\subset X}).
\end{align*}
So $(G_{\mid_\subset Y})_{\mid_\subset X}=G_{\mid_\subset X}$.\\

2. If $\leftthreetimes=\rightthreetimes$, both $(G_{\mid_\leftthreetimes J\sqcup K})_{\mid_\rightthreetimes J}$ and $(G_{\mid_\rightthreetimes I\sqcup J})_{\mid_\leftthreetimes J}$
are equal to $G_{\mid_\leftthreetimes J}$ by the first point. Let us now consider the case $\leftthreetimes=\cap$ and $\rightthreetimes=\subset$. 
\begin{align*}
E((G_{\mid_\cap J\sqcup K})_{\mid_\subset J})&=\{e\in E(G_{\mid_\cap J\sqcup K})\mid e\subseteq J\}\\
&=\{e\cap (J\sqcup K)\mid e\in E(G),\: e\cap (J\sqcup K)\subseteq J\}\\
&=\{e\cap J\mid e\in E(G),\: e\cap K=\emptyset\}.\\
E((G_{\mid_\subset I\sqcup J})_{\mid_\cap J})&=\{e\cap J\mid e\in E(G_{\mid_\subset I\sqcup J})\}\\
&=\{e\cap J\mid e\in E(G),\: e\subseteq I\sqcup J\}\\
&=\{e\cap J\mid e\in E(G),\: e\cap K=\emptyset\}.
\end{align*}
Therefore, $(G_{\mid_\cap J\sqcup K})_{\mid_\subset J}=(G_{\mid_\subset I\sqcup J})_{\mid_\cap J}$. 
By symmetry of $I$ and $K$, this also gives the proof for  $\leftthreetimes=\subset$ and $\rightthreetimes=\cap$. 
\end{proof}

\begin{prop}\label{prop1.2}
Let $\leftthreetimes,\rightthreetimes\in \{\cap,\subset\}$. 
We define a twisted bialgebra structure $(\bfH,m,\Delta^{(\leftthreetimes,\rightthreetimes)})$ on $\bfH$  by the following:
\begin{itemize}
\item For any $G\in \calH[X]$, for any $G'\in \calH[Y]$, $m_{X,Y}(G\otimes G')=GG'$.
\item For any $G\in \calH[I\sqcup J]$,
$\Delta^{(\leftthreetimes,\rightthreetimes)}_{I,J}(G)=G_{\mid_\leftthreetimes I}\otimes G_{\mid_\rightthreetimes J}$.
\end{itemize}
The coopposite coproduct of $\Delta^{(\leftthreetimes,\rightthreetimes)}$ is $\Delta^{(\rightthreetimes,\leftthreetimes)}$. 
Consequently, $\Delta^{(\subset,\subset)}$ and $\Delta^{(\cap,\cap)}$ are cocommutative.
\end{prop}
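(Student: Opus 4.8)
The plan is to verify the bialgebra axioms for $(\bfH, m, \Delta^{(\leftthreetimes,\rightthreetimes)})$ directly, relying on Lemma \ref{lemmecombi} for the coassociativity and compatibility parts. First I would check that $m$ makes $\bfH$ a twisted commutative associative algebra: associativity is immediate since $E(GG') = E(G)\sqcup E(G')$ and disjoint union of sets is associative, and the unit is the empty hypergraph on the empty set (the unique element of $\calH[\emptyset]$). Commutativity holds after applying the symmetry of the Cauchy tensor product. Similarly, the counit of $\Delta^{(\leftthreetimes,\rightthreetimes)}$ is the map sending the empty hypergraph to $1$ and everything else to $0$; counitality follows because $G_{\mid_\leftthreetimes V(G)} = G$ and $G_{\mid_\leftthreetimes \emptyset}$ is the empty hypergraph, both of which are special cases of the first point of Lemma \ref{lemmecombi} together with the definitions.

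Next I would establish coassociativity. Unravelling the species-level definition, $(\Delta^{(\leftthreetimes,\rightthreetimes)} \otimes \id) \circ \Delta^{(\leftthreetimes,\rightthreetimes)}$ applied to $G \in \calH[I\sqcup J\sqcup K]$ gives $(G_{\mid_\leftthreetimes I\sqcup J})_{\mid_\leftthreetimes I} \otimes (G_{\mid_\leftthreetimes I\sqcup J})_{\mid_\rightthreetimes J} \otimes G_{\mid_\rightthreetimes K}$, while $(\id \otimes \Delta^{(\leftthreetimes,\rightthreetimes)}) \circ \Delta^{(\leftthreetimes,\rightthreetimes)}$ gives $G_{\mid_\leftthreetimes I} \otimes (G_{\mid_\rightthreetimes J\sqcup K})_{\mid_\leftthreetimes J} \otimes (G_{\mid_\rightthreetimes J\sqcup K})_{\mid_\rightthreetimes K}$. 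The first and third tensorands match by the first point of Lemma \ref{lemmecombi} (with $\leftthreetimes$ resp. $\rightthreetimes$), and the crucial middle tensorand equality $(G_{\mid_\leftthreetimes I\sqcup J})_{\mid_\rightthreetimes J} = (G_{\mid_\rightthreetimes J\sqcup K})_{\mid_\leftthreetimes J}$ is precisely the second point of Lemma \ref{lemmecombi}. For the bialgebra compatibility between $m$ and $\Delta^{(\leftthreetimes,\rightthreetimes)}$, given $G\in\calH[X]$, $G'\in\calH[Y]$ and a decomposition $X\sqcup Y = I \sqcup J$, one needs $(GG')_{\mid_\leftthreetimes I} = G_{\mid_\leftthreetimes I\cap X}\, G'_{\mid_\leftthreetimes I\cap Y}$ and likewise for $\rightthreetimes$ and $J$; this is straightforward from $E(GG') = E(G)\sqcup E(G')$ since every edge of $GG'$ lies entirely in $X$ or entirely in $Y$, so both the "$\subset$" and "$\cap$" operations distribute over the disjoint union componentwise. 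Summing over all ways to split $I$ (and $J$) across $X$ and $Y$ reproduces the Cauchy-product formula, giving the required identity $\Delta^{(\leftthreetimes,\rightthreetimes)} \circ m = (m\otimes m)\circ \tau_{23} \circ (\Delta^{(\leftthreetimes,\rightthreetimes)}\otimes\Delta^{(\leftthreetimes,\rightthreetimes)})$.

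For the second assertion, the coopposite of $\Delta^{(\leftthreetimes,\rightthreetimes)}$ is $\tau \circ \Delta^{(\leftthreetimes,\rightthreetimes)}$ where $\tau$ is the braiding of the Cauchy tensor product; applied to $G\in\calH[I\sqcup J]$ this yields $G_{\mid_\rightthreetimes J}\otimes G_{\mid_\leftthreetimes I}$, which, after relabelling the roles of $I$ and $J$, is exactly the defining formula of $\Delta^{(\rightthreetimes,\leftthreetimes)}$. Taking $\leftthreetimes = \rightthreetimes$ then immediately shows $\Delta^{(\subset,\subset)}$ and $\Delta^{(\cap,\cap)}$ are cocommutative.

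I expect the main obstacle to be purely bookkeeping: keeping the species-theoretic formalism honest, in particular making sure that the sums over set-decompositions in the Cauchy tensor product and the restrictions $I\cap X$, $I\cap Y$ are handled consistently, and that the braiding is inserted in the correct place for both the compatibility axiom and the coopposite computation. All the genuine combinatorial content — that iterated restrictions collapse and that the two mixed restrictions commute — has already been isolated in Lemma \ref{lemmecombi}, so no new idea is needed beyond carefully tracking indices.
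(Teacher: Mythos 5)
Your proposal is correct and follows essentially the same route as the paper: associativity and unit for $m$, multiplicativity of $\Delta^{(\leftthreetimes,\rightthreetimes)}$ from the fact that every edge of $GG'$ lies entirely in one factor, coassociativity reduced to the two items of Lemma \ref{lemmecombi}, and the counit concentrated on the empty hypergraph. The only addition is that you spell out the coopposite/cocommutativity computation, which the paper states without explicit proof, and that is indeed the immediate relabelling argument you give.
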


\begin{proof}
The product $m$ is obviously associative and its unit is the empty hypergraph. \\

Let  $I,J,I',J'$ be finite sets such that $I'\sqcup J'=I\sqcup J$, and let $G\in \calH[I']$ and $G'\in \calH[J']$.
As the edges of $GG'$ are included in $I$ or in $J$,
\begin{align*}
\Delta_{I,J}^{(\leftthreetimes,\rightthreetimes)}\circ m_{I',J'}(G\otimes G')
&=(GG')_{\mid_\leftthreetimes I}\otimes (GG')_{\mid T J}\\
&=G_{\mid_\leftthreetimes I\cap I'} G'_{\mid_\leftthreetimes I\cap J'}\otimes G_{\mid_\rightthreetimes J\cap I'} 
G'_{\mid_\rightthreetimes J\cap J'}\\
&=(m_{I\cap I',I'\cap J'}\otimes m_{J\cap I', J\cap J'})\circ
(\id_{\bfH[I\cap I']}\otimes c_{\bfH[J\cap I'],\bfH[I\cap J']}\otimes \id_{\bfH[J\cap J']})\\
&\circ(\Delta^{(\leftthreetimes,\rightthreetimes)}_{I\cap I',J\cap I'}\otimes 
\Delta^{(\leftthreetimes,\rightthreetimes)}_{I\cap J',J\cap J'})(G\otimes G'),
\end{align*}
so $\Delta^{(\leftthreetimes,\rightthreetimes)}$ is an algebra morphism.\\

Let us now prove the coassociativity of $\Delta^{(\leftthreetimes,\rightthreetimes)}$. If  $G\in \calH[I\sqcup J\sqcup K]$, by Lemma \ref{lemmecombi}, first item,
\begin{align*}
(\Delta^{(\leftthreetimes,\rightthreetimes)}_{I,J}\otimes \id)\circ \Delta^{(\leftthreetimes,\rightthreetimes)}_{I\sqcup J,K}(G)
&=(G_{\mid_\leftthreetimes I\sqcup J})_{\mid_\leftthreetimes I}\otimes (G_{\mid_\leftthreetimes I\sqcup J})_{\mid_\rightthreetimes J}
\otimes G_{\mid_\rightthreetimes K}\\
&=G_{\mid_\leftthreetimes  I}\otimes (G_{\mid_\leftthreetimes I\sqcup J})_{\mid_\rightthreetimes J} \otimes G_{\mid_\rightthreetimes K},\\
(\id \otimes \Delta^{(\leftthreetimes,\rightthreetimes)}_{J,K})\circ \Delta^{(\leftthreetimes,\rightthreetimes)}_{I,J\sqcup K}(G)
&=G_{\mid_\leftthreetimes  I}\otimes (G_{\mid_\rightthreetimes J\sqcup K})_{\mid_\leftthreetimes J}\otimes (G_{\mid_\rightthreetimes J\sqcup K})_{\mid_\rightthreetimes K}\\
&=G_{\mid_\leftthreetimes  I}\otimes (G_{\mid_\rightthreetimes J\sqcup K})_{\mid_\leftthreetimes J}\otimes G_{\mid_\rightthreetimes K}.
\end{align*}
By Lemma \ref{lemmecombi}, second item, $(G_{\mid_\leftthreetimes I\sqcup J})_{\mid_\rightthreetimes J}=(G_{\mid_\rightthreetimes J\sqcup K})_{\mid_\leftthreetimes J}$,
so $\Delta^{(\leftthreetimes,\rightthreetimes)}$ is coassociative. These four coproducts share the same counit, defined by $\varepsilon(\emptyset)=1$. 
\end{proof}

\begin{example}\label{ex1.1}
For any finite set $X$ with at least two elements, let us denote by $T_X$ the hypergraph which vertices set is $X$,
with $X$ as a unique nontrivial edge. For any finite nonempty disjoint sets $I$ and $J$,
\begin{align*}
\Delta^{(\subset,\subset)}_{I,J}(T_{I\sqcup J})&=\prod_{x\in I} T_{\{x\}}\otimes \prod_{y\in J} T_{\{y\}},&
\Delta^{(\cap,\cap)}_{I,J}(T_{I\sqcup J})&=T_I\otimes T_J,\\
\Delta^{(\cap,\subset)}_{I,J}(T_{I\sqcup J})&=T_I\otimes \prod_{y\in J} T_{\{y\}},&
\Delta^{(\subset,\cap)}_{I,J}(T_{I\sqcup J})&=\prod_{x\in I} T_{\{x\}}\otimes T_J.
\end{align*}
\end{example}

\begin{remark} 
\begin{enumerate}
\item As seen in Remark \ref{rk1.1}, graphs are hypergraphs, so $\bfH$ contains a subspecies of graphs, 
which is a twisted subbialgebra. Moreover, if $G$ is a graph,
\[\Delta^{(\subset,\subset)}(G)=\Delta^{(\cap,\subset)}(G)
=\Delta^{(\subset,\cap)}(G)=\Delta^{(\cap,\cap)}(G).\]
We recover the twisted bialgebra of graphs of \cite{Foissy40,Foissy41,Foissy43}.
\item Let $I_1,\ldots,I_n$ be disjoint sets. For any hypergraph $G\in \calH[I_1\sqcup \ldots \sqcup I_n]$,
\begin{align*}
\Delta^{(\leftthreetimes,\leftthreetimes)}_{I_1,\ldots,I_n}(G)&=G_{\mid_\leftthreetimes I_1}
\otimes \ldots \otimes G_{\mid_\leftthreetimes I_n},
\end{align*}
whereas
\begin{align*}
\Delta^{(\subset,\cap)}_{I_1,\ldots,I_n}(G)&=G_{\mid^{(1)} I_1}\otimes \ldots \otimes G_{\mid^{(n)} I_n},
\end{align*}
where for any $p\in [n]$, 
\begin{align*}
V(G_{\mid^{(p)} I_p})&=I_p,&E(G_{\mid^{(p)} I_p})=&\{e\cap I_p\mid e\in E(G),\: e\subseteq I_1\sqcup \ldots \sqcup I_p\}.
\end{align*}
In other words, the nonempty edges of $G_{\mid^{(p)} I_p}$ are the sets $e\cap I_p$, where $e$ runs among the edges of $G$
such that 
\[\max\{i\in [n]\mid e\cap I_i\neq \emptyset\}=p.\]
\end{enumerate}\end{remark}

\begin{notation}
In the following, we shall simply write $\Delta^{(\leftthreetimes)}$ for $\Delta^{(\leftthreetimes,\leftthreetimes)}$ for $\leftthreetimes\in \{\subset,\cap\}$.
\end{notation}

\subsection{Contraction-extraction coproducts}

In order to define double bialgebras of graphs, we shall use here the formalism of contraction-extraction coproducts of \cite{Foissy41}.
We introduce for this contractions of hypergraphs, with connectedness constraints.

\begin{defi}
Let $G$ be a hypergraph. A path in $G$ is a sequence $(x_0,\ldots,x_k)$ of vertices of $G$
such that for any $i\in [k]$, there exists an edge $e\in E(G)$ containing both $x_{i-1}$ and $x_i$. 
We shall say that $G$ is connected if for any $x,y\in V(G)$, there exists a path in $G$ from $X$ to $Y$.
Any hypergraph $G$ can be uniquely  written as the product of connected hypergraphs, called the connected components of $G$.
\end{defi}

\begin{notation}  
We use the notations of \cite[Notations 2.1]{Foissy41} for the equivalence relations. 
For any finite set $X$, $\eq[X]$ is the set of equivalence  relations on $X$. It is partially ordered by the refinement order: 
if $\sim,\sim'\in \eq[X]$, then 
\[\sim\leq \sim'\Longleftrightarrow (\forall x,y\in X,\: x\sim' y\Longrightarrow x\sim y).\]
If $\sim'\in \eq[X]$, then $\{\sim\in \eq[X]\mid \sim \leq \sim'\}$ and  $\eq[X/\sim']$ are in bijection, via the map
sending $\sim$ to $\overline{\sim}$ defined by
\[\overline{x}\overline{\sim}\overline{y}\Longleftrightarrow x\sim y.\]
We identify in this way  $\{\sim\in \eq[X]\mid \sim \leq \sim'\}$ and  $\eq[X/\sim']$.
\end{notation}

\begin{defi}
Let $G\in \calH[X]$, $\sim\in \eq[X]$ and let $\leftthreetimes \in \{\subset,\cap\}$.
\begin{itemize}
\item We define the hypergraph $G/\sim\in \calH[X/\sim]$ by
\begin{align*}
V(G/\sim)&=X/\sim,&
E(G/\sim)&=\{\pi_\sim(e)\mid e\in E(G)\},
\end{align*}
where $\pi_\sim:X\longrightarrow X/\sim$ is the canonical surjection. By the conditions we imposed in Definition \ref{defi1.1} on hypergraphs,
this is indeed a hypergraph. 
\item We define the hypergraph $G\mid_\leftthreetimes \sim\in \calH[X]$ by
\[G\mid_\leftthreetimes \sim=\prod_{C\in X/\sim} G_{\mid_\leftthreetimes C}.\]
\item We shall say that $\sim\in \eq_\leftthreetimes[G]$ if for any class $C$ of $\sim$, $G_{\mid_\leftthreetimes C}$ is connected. 
\end{itemize}\end{defi}

\begin{remark} For any hypergraph $G$ and $\sim\in \eq[V(G)]$, $V(G\mid_\cap \sim)=V(G\mid_\subset \sim)=V(G)$, and
\begin{align*}
E(G\mid_\cap \sim)&=\{e\cap C\mid e\in E(G),\: C\in V(G)/\sim\},\\
E(G\mid_\subset \sim)&=\{e\in E(G)\mid |\pi_\sim(e)|\leq 1\}.
\end{align*}
\end{remark}

By definition, if $\sim \in \eq_\leftthreetimes[G]$, the connected components of $G_{\mid_\leftthreetimes C}$ 
are the classes of $\sim$. 

\begin{theo} \label{theo1.4}
Let $\leftthreetimes \in \{\cap,\subset\}$. 
For any hypergraph $G\in \calH[X]$ and for any $\sim\in \eq[X]$, we put
\[\delta_\sim^{(\leftthreetimes)}(G)=\begin{cases}
G/\sim\otimes \:G\mid_\leftthreetimes \sim\mbox{ if }\sim\in \eq_\leftthreetimes[G],\\
0\mbox{ otherwise}.
\end{cases}\]
This defines a contraction-extraction coproduct on $\bfH$ in  the sense of \cite{Foissy41}, 
compatible with $m$ and $\Delta^{(\leftthreetimes)}$. 
\end{theo}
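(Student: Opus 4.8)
The statement requires checking three things: that $\delta^{(\leftthreetimes)}$ is coassociative (as a non-counitary coproduct, or with the appropriate counit), that it is a contraction-extraction coproduct in the sense of \cite{Foissy41} (so, multiplicative with respect to $m$ in the right sense and equipped with a counit), and that it is compatible with $\Delta^{(\leftthreetimes)}$, the compatibility being the cointeraction identity displayed in the introduction. The plan is to verify each of these in turn, the combinatorial heart being a ``double quotient'' bijection analogous to Lemma~\ref{lemmecombi} but for equivalence relations instead of subsets.

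\emph{Coassociativity.} I would first show that for $G\in\calH[X]$ and $\sim\,\leq\,\sim'$ in $\eq[X]$ (so $\sim$ induces $\overline\sim$ on $X/\sim'$), one has $(G/\sim')/\overline\sim = G/\sim$, and that $\sim'\in\eq_\leftthreetimes[G]$ together with $\overline{\sim}\in\eq_\leftthreetimes[G/\sim']$ is equivalent to $\sim\in\eq_\leftthreetimes[G]$; moreover in that case $G\mid_\leftthreetimes\sim$ decomposes compatibly, i.e.\ the connected components of $G\mid_\leftthreetimes\sim$ refine those of $G\mid_\leftthreetimes\sim'$ so that $(G\mid_\leftthreetimes\sim')\mid_\leftthreetimes\overline{\sim} = G\mid_\leftthreetimes\sim$ in the obvious sense. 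Once these identities on contractions and restrictions are in place, both iterates $(\delta^{(\leftthreetimes)}\otimes\id)\circ\delta^{(\leftthreetimes)}$ and $(\id\otimes\delta^{(\leftthreetimes)})\circ\delta^{(\leftthreetimes)}$ applied to $G$ equal $\sum_{\sim\leq\sim'} G/\sim'\otimes (G/\sim')\!\mid_\leftthreetimes\!\overline{\sim}\,\otimes\, G\mid_\leftthreetimes\sim$, summed over the pairs with $\sim'\in\eq_\leftthreetimes[G]$ and $\overline\sim\in\eq_\leftthreetimes[G/\sim']$; the case $\leftthreetimes=\cap$ needs the observation that $(G_{\mid_\cap C})_{\mid_\cap C'} = G_{\mid_\cap C'}$ for $C'\subseteq C$, already implicit in Lemma~\ref{lemmecombi}(1).

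\emph{Compatibility with $m$ and counit, and the cointeraction.} Multiplicativity is routine: if $G=G_1G_2$ is a disjoint union then $\eq_\leftthreetimes[G]$ is the product of the $\eq_\leftthreetimes[G_i]$ (an equivalence class meeting both $V(G_1)$ and $V(G_2)$ can never yield a connected restriction, so each class lies in a single $V(G_i)$), and contraction and restriction respect this decomposition. For the counit one uses the discrete equivalence $\sim_0$ (all classes singletons), which always lies in $\eq_\leftthreetimes[G]$, giving $(\varepsilon\otimes\id)\circ\delta^{(\leftthreetimes)}(G)=G$, while $(\id\otimes\varepsilon')\circ\delta^{(\leftthreetimes)}(G)=G/\!\!\sim_1$ for the coarse equivalence when $G$ is connected — these are exactly the counit axioms of a contraction-extraction coproduct in \cite{Foissy41}. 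For the compatibility with $\Delta^{(\leftthreetimes)}$ I would invoke the general formalism: I need to check that $\delta^{(\leftthreetimes)}$ is a coaction making $(\bfH,m,\Delta^{(\leftthreetimes)})$ a bialgebra in the category of right comodules, i.e.\ verify $(\Delta^{(\leftthreetimes)}\otimes\id)\circ\delta^{(\leftthreetimes)} = m_{1,3,24}\circ(\delta^{(\leftthreetimes)}\otimes\delta^{(\leftthreetimes)})\circ\Delta^{(\leftthreetimes)}$. Concretely: decompose $V(G)=I\sqcup J$; on the left we first pick $\sim\in\eq_\leftthreetimes[G]$, contract, then split $V(G/\!\sim)$ into the images of $I$ and $J$ — which forces $\sim$ to not identify any vertex of $I$ with a vertex of $J$, so $\sim$ restricts to $\sim_I\in\eq[I]$, $\sim_J\in\eq[J]$; on the right we first split into $G_{\mid_\leftthreetimes I}$ and $G_{\mid_\leftthreetimes J}$, then choose $\sim_I\in\eq_\leftthreetimes[G_{\mid_\leftthreetimes I}]$ and $\sim_J\in\eq_\leftthreetimes[G_{\mid_\leftthreetimes J}]$. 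The matching relies on the restriction-commutation of Lemma~\ref{lemmecombi} together with the fact that a class $C$ of $\sim$ contained in $I$ satisfies $G_{\mid_\leftthreetimes C}=(G_{\mid_\leftthreetimes I})_{\mid_\leftthreetimes C}$, and that connectedness of $G_{\mid_\leftthreetimes C}$ in $G$ is the same as in $G_{\mid_\leftthreetimes I}$ (this is where the definition of $\eq_\leftthreetimes$ interacts well with the two induction notions).

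\emph{Main obstacle.} The only delicate point is the interaction of ``$\sim\in\eq_\leftthreetimes[G]$'' with contraction, needed for coassociativity: one must be sure that, given $\sim'\in\eq_\leftthreetimes[G]$, the condition $\overline{\sim}\in\eq_\leftthreetimes[G/\sim']$ on the contracted hypergraph is genuinely equivalent to the finer statement we need, namely $\sim\in\eq_\leftthreetimes[G]$ with the extra data that the classes of $\sim'$ are unions of connected components of $G\mid_\leftthreetimes\sim$. For $\leftthreetimes=\subset$ one has to be careful that an edge $e$ of $G$ contributes an edge to $G/\sim'$ exactly when $\pi_{\sim'}(e)$ is nontrivial, and to track which edges survive further contraction; for $\leftthreetimes=\cap$ the subtlety is that edges of $G_{\mid_\cap C}$ are intersections $e\cap C$, so one must check that contracting and then intersecting gives the same hypergraph as intersecting and then contracting. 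Both reduce to Lemma~\ref{lemmecombi}-type manipulations on the edge sets, done separately in the two cases; once those edge-set identities are nailed down, everything else is bookkeeping, and invoking \cite[]{Foissy41} packages the verification that the resulting structure deserves the name ``contraction-extraction coproduct''.
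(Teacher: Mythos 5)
Your overall architecture is exactly the paper's: prove an analogue of Lemma \ref{lemmecombi} for nested equivalences (how contraction and the two restrictions interact), then verify coassociativity, multiplicativity, the counit and the cointeraction with $\Delta^{(\leftthreetimes)}$ in the componentwise formalism of \cite{Foissy41}; your treatment of multiplicativity and of the cointeraction is essentially the paper's. But two steps are wrong as written. First, the equivalence you announce for coassociativity, ``$\sim'\in\eq_\leftthreetimes[G]$ and $\overline{\sim}\in\eq_\leftthreetimes[G/\sim']$ if and only if $\sim\in\eq_\leftthreetimes[G]$'', fails in the backward direction: with the paper's convention $\sim\leq\sim'$ means $\sim'$ is the finer relation, and for $G$ the path on $\{x,y,z\}$ with edges $\{x,y\},\{y,z\}$, $\sim$ the coarse equivalence and $\sim'$ the one with classes $\{x,z\},\{y\}$, one has $\sim\in\eq_\leftthreetimes[G]$ but $\sim'\notin\eq_\leftthreetimes[G]$, so the left-hand side fails. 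The correct statement (Lemma \ref{lem1.5}, fourth item, in the paper) is that the pair of conditions is equivalent to $\sim\in\eq_\leftthreetimes[G]$ \emph{and} $\sim'\in\eq_\leftthreetimes[G\mid_\leftthreetimes\sim]$; without the second condition your indexing set for the common value of the two iterates is too large and the identification of the two supports breaks down. Your later reformulation (``classes of $\sim'$ are unions of connected components of $G\mid_\leftthreetimes\sim$'') is also not the right condition under the refinement convention you adopted. Relatedly, the identity needed for the middle tensor factor is $(G/\sim')\mid_\leftthreetimes\overline{\sim}=(G\mid_\leftthreetimes\sim)/\sim'$ (contraction commutes with restriction); the identity you write, $(G\mid_\leftthreetimes\sim')\mid_\leftthreetimes\overline{\sim}=G\mid_\leftthreetimes\sim$, does not typecheck ($\overline{\sim}$ lives on $X/\sim'$) and is not what the computation requires, although your ``main obstacle'' paragraph does describe the right commutation informally; it must be stated and proved separately for $\subset$ and $\cap$.

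Second, the counit verification is garbled. You never specify the counit (it is $\epsilon_\delta(G)=1$ if $E^+(G)=\emptyset$, $0$ otherwise), and you attach the extreme equivalences to the wrong sides: for the all-singletons equivalence one has $G/\sim=G$ and $G\mid_\leftthreetimes\sim$ edgeless, so it is the surviving term of $(\id\otimes\epsilon_\delta)\circ\delta^{(\leftthreetimes)}$, whereas the equivalence whose classes are the connected components of $G$ has $G\mid_\leftthreetimes\sim=G$ and an edgeless quotient, so it handles $(\epsilon_\delta\otimes\id)\circ\delta^{(\leftthreetimes)}$; there is no restriction to connected $G$, and the coarse equivalence plays no role. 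One must also argue that every other $\sim\in\eq_\leftthreetimes[G]$ is killed by $\epsilon_\delta$ on the relevant factor (any class, respectively any component of the quotient, not reduced to a vertex carries a nontrivial edge by connectedness), a point your plan omits. With these two repairs the plan coincides with the paper's proof.
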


Let us start the proof of this theorem  with a combinatorial lemma.

\begin{lemma}\label{lem1.5}
Let $G\in \bf[X]$ and $\leftthreetimes \in \{\subset,\cap\}$.
\begin{enumerate}
\item If  $\sim\leqslant \sim'\in \eq[X]$, then the hypergraphs $(G/\sim')/\overline{\sim}$ and $G/\sim$ are equal.
\item Let $\sim \in \eq[X]$. Then 
\begin{align*}
\sim \in \eq_\leftthreetimes[G]&\Longleftrightarrow\mbox{the connected components of 
$G\mid_\leftthreetimes \sim$ are the classes of $\sim$}.
\end{align*}
\item Let $\sim\in \eq_\leftthreetimes[G]$. The connected components of $G/\sim$ are the images by $\pi_\sim$ 
of the connected components of $G$. 
\item Let $\sim\leqslant \sim'\in \eq[X]$.
Then 
\begin{align*}
\sim'\in \eq_\leftthreetimes[G]\mbox{ and }\overline{\sim}\in \eq_\leftthreetimes[G/\sim']
&\Longleftrightarrow \sim \in \eq_\leftthreetimes[G] \mbox{ and }\sim'\in \eq_\leftthreetimes[G\mid_\leftthreetimes \sim].
\end{align*}
If this holds, $(G/\sim')\mid_\leftthreetimes \overline{\sim} =(G\mid_\leftthreetimes\sim)/\sim'$. 
\end{enumerate}
\end{lemma}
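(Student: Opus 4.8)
The plan is to handle the four assertions in the order stated, since they grow in difficulty: the first two are essentially definition-chasing, the third is the technical core, and the fourth will be reduced to the third.

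\emph{Parts 1 and 2.} For Part 1, both $(G/\sim')/\overline{\sim}$ and $G/\sim$ carry the canonical identification of their vertex set with $X/\sim$, so it suffices to compare edge sets; there one has $E(G/\sim)=\{\pi_\sim(e)\mid e\in E(G)\}$ and $E((G/\sim')/\overline{\sim})=\{\pi_{\overline{\sim}}(\pi_{\sim'}(e))\mid e\in E(G)\}$, and $\pi_{\overline{\sim}}\circ\pi_{\sim'}=\pi_\sim$ under the identification of $\eq[X/\sim']$ with $\{\approx\in\eq[X]\mid \approx\le\sim'\}$, which settles Part 1. For Part 2, I would use that $G\mid_\leftthreetimes\sim=\prod_{C\in X/\sim}G_{\mid_\leftthreetimes C}$, that the connected components of a disjoint product of hypergraphs are the disjoint union of the connected components of the factors, and that $V(G_{\mid_\leftthreetimes C})=C$: the classes of $\sim$ are then exactly the connected components of $G\mid_\leftthreetimes\sim$ if and only if each factor $G_{\mid_\leftthreetimes C}$ is connected, i.e.\ if and only if $\sim\in\eq_\leftthreetimes[G]$.

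\emph{Part 3.} First I would record a \emph{lifting principle}: if $x\sim y$, then $x$ and $y$ lie in the same connected component of $G$. Indeed, $x$ and $y$ lie in a common class $D$, $G_{\mid_\leftthreetimes D}$ is connected because $\sim\in\eq_\leftthreetimes[G]$, and any path of $G_{\mid_\leftthreetimes D}$ is a path of $G$ --- tautologically when $\leftthreetimes=\subset$, and because an edge $e\cap D$ of $G_{\mid_\cap D}$ is contained in the edge $e$ of $G$ when $\leftthreetimes=\cap$. In particular all $\pi_\sim$-preimages of a given vertex of $G/\sim$ lie in one connected component of $G$, so there is a well-defined map $\phi$ from $V(G/\sim)$ to the set of connected components of $G$, sending $\overline{x}$ to the component of any preimage of $\overline{x}$. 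The core step is then to show that two vertices $\overline{x},\overline{y}$ of $G/\sim$ are in the same connected component of $G/\sim$ if and only if $\phi(\overline{x})=\phi(\overline{y})$. The ``if'' direction is immediate, since $\pi_\sim$ carries a path of $G$ to a path of $G/\sim$. For the ``only if'' direction --- the place where I expect the bookkeeping to be heaviest --- I would take a path $\overline{x}_0,\dots,\overline{x}_k$ of $G/\sim$ with $\overline{x}_{i-1},\overline{x}_i\in\pi_\sim(e_i)$ for edges $e_i\in E(G)$, pick preimages $u_i,v_i\in e_i$ of $\overline{x}_{i-1},\overline{x}_i$, and observe that $u_i$ and $v_i$ share the edge $e_i$ while $v_i\sim u_{i+1}$, so that $v_i$ and $u_{i+1}$ share a connected component of $G$ by the lifting principle; chaining these steps places $u_1$ and $v_k$ in one connected component, whence $\phi(\overline{x}_0)=\phi(\overline{x}_k)$. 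Since the fibre of $\phi$ over a connected component $C$ of $G$ is exactly $\pi_\sim(C)$, this shows that the connected components of $G/\sim$ are precisely the images $\pi_\sim(C)$, which is Part 3.

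\emph{Part 4.} Fix a class $C$ of $\sim$. Because $\sim\le\sim'$, every class $D$ of $\sim'$ meeting $C$ is contained in $C$, so Lemma \ref{lemmecombi}(1) gives $(G\mid_\leftthreetimes\sim)_{\mid_\leftthreetimes D}=(G_{\mid_\leftthreetimes C})_{\mid_\leftthreetimes D}=G_{\mid_\leftthreetimes D}$; running over all $D$ this yields $\sim'\in\eq_\leftthreetimes[G\mid_\leftthreetimes\sim]\Longleftrightarrow\sim'\in\eq_\leftthreetimes[G]$, a condition that appears on both sides of the claimed equivalence. It then remains, for each class $C$ of $\sim$, to compare the connectedness of $G_{\mid_\leftthreetimes C}$ with that of $(G/\sim')_{\mid_\leftthreetimes\pi_{\sim'}(C)}$ (the latter being the restriction of $G/\sim'$ to the class of $\overline{\sim}$ attached to $C$). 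A direct comparison of edge sets, using that $C$ is a union of classes of $\sim'$, gives the identity $(G/\sim')_{\mid_\leftthreetimes\pi_{\sim'}(C)}=(G_{\mid_\leftthreetimes C})/\sim'_C$, where $\sim'_C$ denotes the restriction of $\sim'$ to $C$; moreover $\sim'\in\eq_\leftthreetimes[G]$ forces $\sim'_C\in\eq_\leftthreetimes[G_{\mid_\leftthreetimes C}]$, so applying Part 3 to $G_{\mid_\leftthreetimes C}$ shows $(G_{\mid_\leftthreetimes C})/\sim'_C$ is connected if and only if $G_{\mid_\leftthreetimes C}$ is. Combining these observations gives the equivalence of Part 4. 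Finally, $(G/\sim')\mid_\leftthreetimes\overline{\sim}=\prod_C(G/\sim')_{\mid_\leftthreetimes\pi_{\sim'}(C)}=\prod_C(G_{\mid_\leftthreetimes C})/\sim'_C$, while $(G\mid_\leftthreetimes\sim)/\sim'=\bigl(\prod_C G_{\mid_\leftthreetimes C}\bigr)/\sim'$; since $\sim'$ never identifies vertices belonging to two distinct factors $G_{\mid_\leftthreetimes C}$, the contraction distributes over the disjoint product and the two expressions agree, giving the last identity.
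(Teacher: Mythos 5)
Your Parts 1--3 are correct and essentially the paper's own argument: Part 1 is the same edge-set computation, Part 2 the same observation about the product $\prod_C G_{\mid_\leftthreetimes C}$, and your ``lifting principle'' plus the map $\phi$ in Part 3 is a repackaging of the paper's argument that classes of an admissible equivalence lie inside connected components (its $\sim_G\leq\sim$) together with the fact that quotients of connected hypergraphs are connected. Part 4 is where you take a genuinely different route. The paper proves the two implications directly, the delicate one being a path in $(G/\sim')\mid_\leftthreetimes\overline{\sim}$ lifted step by step to a path in $G$ using connectivity of the $\sim'$-classes, and it then proves the identity $(G/\sim')\mid_\leftthreetimes\overline{\sim}=(G\mid_\leftthreetimes\sim)/\sim'$ separately by a global comparison of edge sets (including the computation $\pi_{\sim'}(e\cap C)=\pi_{\sim'}(e)\cap\pi_{\sim'}(C)$ in the $\cap$ case). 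You instead first observe the unconditional equivalence $\sim'\in\eq_\leftthreetimes[G\mid_\leftthreetimes\sim]\Longleftrightarrow\sim'\in\eq_\leftthreetimes[G]$, which makes the two sides share a hypothesis, then localize to each class $C$ of $\sim$, prove the unconditional identity $(G/\sim')_{\mid_\leftthreetimes\pi_{\sim'}(C)}=(G_{\mid_\leftthreetimes C})/\sim'_C$, and invoke Part 3 for $G_{\mid_\leftthreetimes C}$ to transfer connectivity; the final identity of the lemma then falls out by distributing the contraction over the product. This buys a shorter Part 4 with no second path-lifting argument, and it yields the displayed identity unconditionally rather than only under the equivalence hypotheses. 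Two points deserve explicit mention in a full write-up: your class-by-class edge-set identity hides, in the $\cap$ case, exactly the saturation computation $\pi_{\sim'}(e)\cap\pi_{\sim'}(C)=\pi_{\sim'}(e\cap C)$ that the paper spells out; and the backward direction of ``$(G_{\mid_\leftthreetimes C})/\sim'_C$ connected iff $G_{\mid_\leftthreetimes C}$ connected'' needs that distinct components have disjoint images, which follows from your lifting principle but is not literally contained in the statement of Part 3. With those details written down, the proof is complete and correct.
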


\begin{proof} 1. Firstly, $V((G/\sim')/\overline{\sim})=V(G)/\sim=V(G/\sim)$ and secondly, 
\begin{align*}
E((G/\sim')/\overline{\sim})&=\{\pi_{\overline{\sim}}\circ \pi_{\sim'}(e)\mid e\in E(G)\}
=\{\pi_\sim(e)\mid e\in E(G)\}=E(G/\sim).
\end{align*}
Hence, $(G/\sim')/\overline{\sim}=G/\sim$. \\

2. Immediate consequence of  the definition of $\eq_\leftthreetimes[G]$, as $\pi_\sim=\pi_{\overline{\sim}}\circ \pi_{\sim'}$. \\

3. By definition of the connectivity, if $H$ is a connected hypergraph and $\sim\in \eq[V(H)]$, then $H/\sim$ is connected.
Consequently, if $H$ is a connected component of $G$, $\pi_\sim(H)$ is connected, so is included in a connected component
of $G/\sim$: we proved that the connected components of $G/\sim$ are union of images by $\pi_\sim$ of connected components
of $G$. 

Let us consider the equivalence $\sim_G$ defined on $V(G)$ by
\[x\sim_G y\mbox{ if there exists a path in $G$ from $x$ to $y$}.\]
By definition, the classes of $G$ are the connected components of $G$, and $\sim_G\in \eq_\leftthreetimes[G]$. 
As $\sim \in \eq_\leftthreetimes[G]$, its classes are connected, so are included in a single connected component of $G$:
$\sim_G\leq \sim$. Therefore, if $x$ and $y$ are in two different connected components of $G$,
then $\pi_\sim(x)\neq \pi_\sim(y)$ and there is no edge containing these two elements in $G/\sim$:
any connected component of $G/\sim$ is included in a single $\pi_\sim(H)$, where $H$ is a connected component of $G$. \\

4. $\Longrightarrow$. Let $C$ be a class of $\sim'$. As $\sim\leqslant \sim'$,
$(G\mid_\leftthreetimes \sim)_{\mid_\leftthreetimes C}=G_{\mid_\leftthreetimes C}$, so is connected  
as $\sim' \in \eq_\leftthreetimes[G]$. Therefore, $\sim'\in \eq_\leftthreetimes[G\mid_\leftthreetimes \sim]$. 

Let $C$ be a class of $\sim$ and let $x,y\in C$. As $\overline{\sim}\in \eq_\leftthreetimes[G/\sim']$,
there exists a path from $\pi_{\sim'}(x)$ to $\pi_{\sim'}(y)$ in $(G/\sim')\mid_\leftthreetimes \overline{\sim}$.
We denote this path by $(\pi_{\sim'}(x_0),\ldots,\pi_{\sim'}(x_k))$. Note that all the elements $\pi_{\sim'}(x_p)$
are $\overline{\sim}$-equivalent, so all the elements $x_p$ are $\sim$-equivalent.
By definition of $G/\sim'$, we can assume that for any $p$, there exists $y_p$ such that $x_p\sim' y_p$, 
and with an edge in $G$ containing both $y_p$ and $x_{p+1}$. As $\sim'\in \eq_\leftthreetimes[G]$,
for any $p$ there exists a path from $x_p$ to $y_p$, with all the vertices being $\sim'$-equivalent, so also $\sim$-equivalent
as $\sim\leqslant \sim'$. Hence, there exists in $G$ a path from $x$ to $y$ with all vertices being $\sim$-equivalent:
$C$ is connected, which proves that $\sim \in \eq_\leftthreetimes[G]$. \\

$\Longleftarrow$. Let $G\in \bf[X]$ and $\sim\leqslant \sim'\in \eq[X]$.
Let us prove that  if  $\sim \in \eq_\leftthreetimes[G]$ and $\sim'\in \eq_\leftthreetimes[G\mid_\leftthreetimes \sim]$,
then $\sim'\in \eq_\leftthreetimes[G]$ and $\overline{\sim}\in \eq_\leftthreetimes[G/\sim']$.

Let $C$ be a class of $\sim'$. Then it is connected in $G\mid_\leftthreetimes \sim$, so also in $G$: 
we proved that $\sim'\in \eq_\leftthreetimes[G]$.
Let $\pi_{\sim'}(C)$ be a class of $\overline{\sim}$: as $\sim\leqslant \sim'$, we can assume that $C$ is a class of $\sim$. 
As $\sim \in \eq_\leftthreetimes[G]$, $C$ is connected. By the third item of Lemma \ref{lem1.5},
 $\pi_{\sim'}(C)$ is connected in $G/\sim'$, so $\overline{\sim}\in \eq_\leftthreetimes[G/\sim']$. 

Let us now prove the equality $(G/\sim')\mid_\subset \overline{\sim}=(G\mid_\subset \sim)/\sim'$. As $\sim\leqslant \sim'$,
\begin{align*}
E((G/\sim')\mid_\subset \overline{\sim})&=\{\pi_{\sim'}(e)\mid e\in E(G),\mbox{ all the elements of $\pi_{\sim'}(e)$ 
are $\overline{\sim}$-equivalent}\}\\
&=\{\pi_{\sim'}(e)\mid e\in E(G),\mbox{ all the elements of $e$ 
are $\sim$-equivalent}\}\\
&=E((G\mid_\subset \sim)/\sim').
\end{align*}
Let us finally prove the equality $(G/\sim')\mid_\cap \overline{\sim}=(G\mid_\cap \sim)/\sim'$.
\begin{align*}
E((G/\sim')\mid_\cap \overline{\sim})&=\{\pi_{\sim'}(e)\cap C\mid e\in E(G),\: C\in V(G/\sim')/\overline{\sim}\}\\
&=\{\pi_{\sim'}(e)\cap \pi_{\sim'}(C)\mid e\in E(G),\: C\in V(G)/\sim\},\\
E((G\mid_\cap \sim)/\sim')&=\{\pi_{\sim'}(e\cap C)\mid e\in E(G),\: C\in V(G)/\sim\}.
\end{align*}
Let $e\in E(G)$ and $C\in V(G)/\sim$. Obviously, $\pi_{\sim'}(e\cap C)\subseteq \pi_{\sim'}(e)\cap \pi_{\sim'}(C)$.
Let $\overline{y}\in \pi_{\sim'}(e)\cap \pi_{\sim'}(C)$. There exists $y'\in e$ and $y''\in C$
such that $y\sim' y'\sim' y''$. As $\sim \leqslant \sim'$, $y\sim y'\sim y''$, so $y'\in C$ and $\overline{y}=\overline{y'}
\in  \pi_{\sim'}(e\cap C)$. We proved that $\pi_{\sim'}(e\cap C)=\pi_{\sim'}(e)\cap \pi_{\sim'}(C)$,
which implies that $(G/\sim')\mid_\cap \overline{\sim}=(G\mid_\cap \sim)/\sim'$.
\end{proof}

\begin{proof} (Theorem \ref{theo1.4}). 
Let us first prove the coassociativity of $\delta^{(\leftthreetimes)}$, see \cite[Definition 2.2, third item]{Foissy41}.
Let $G\in \bf[X]$ and $\sim\leqslant \sim'\in \eq[X]$. Then, by Lemma \ref{lem1.5}, first item,
\begin{align*}
(\delta_\sim^{(\leftthreetimes)}\otimes \id)\circ \delta_{\sim'}^{(\leftthreetimes)}(G)
&=\begin{cases}
(G/\sim')/\overline{\sim}\otimes (G/\sim')\mid_\leftthreetimes\overline{\sim}\otimes G\mid_\leftthreetimes \sim'
\mbox{ if $\sim'\in \eq_\leftthreetimes[G]$ and $\overline{\sim}\in \eq_\leftthreetimes[G/\sim']$}\\
0\mbox{ otherwise}
\end{cases}\\
&=\begin{cases}
G/\sim\otimes (G/\sim')\mid_\leftthreetimes\overline{\sim}\otimes G\mid_\leftthreetimes \sim'
\mbox{ if $\sim'\in \eq_\leftthreetimes[G]$ and $\overline{\sim}\in \eq_\leftthreetimes[G/\sim']$}\\
0\mbox{ otherwise}
\end{cases}\\
(\id \otimes \delta_{\sim'}^{(\leftthreetimes)})\circ \delta_\sim^{(\leftthreetimes)}(G)
&=\begin{cases}
G/\sim \otimes (G\mid_\leftthreetimes \sim)/\sim' \otimes (G\mid_\leftthreetimes \sim)\mid_\leftthreetimes \sim'
\mbox{ if $\sim \in \eq_\leftthreetimes[G]$ and $\sim'\in \eq_\leftthreetimes[G\mid_\leftthreetimes \sim]$},\\
 0\mbox{ otherwise}
\end{cases}\\
&=\begin{cases}
G/\sim \otimes (G\mid_\leftthreetimes \sim)/\sim' \otimes G\mid_\leftthreetimes \sim'
\mbox{ if $\sim \in \eq_\leftthreetimes[G]$ and $\sim'\in \eq_\leftthreetimes[G\mid_\leftthreetimes \sim]$},\\
 0\mbox{ otherwise}.
\end{cases}
\end{align*}
By Lemma \ref{lem1.5},  fourth item,
\[(\delta_\sim^{(\leftthreetimes)}\otimes \id)\circ \delta_{\sim'}^{(\leftthreetimes)}(G)
=(\id \otimes \delta_{\sim'}^{(\leftthreetimes)})\circ \delta_\sim^{(\leftthreetimes)}(G).\]

Let us now prove the multiplicativity of $\delta^{(\leftthreetimes)}$, see \cite[Proposition 2.4]{Foissy41}.
Let $G\in \bfH[X]$ and $G'\in \bfH[Y]$, and $\sim \in \eq[X\sqcup Y]$.
If $\sim\neq \sim_X\sqcup \sim_Y$, because of the connectivity condition, $\sim\notin \eq_\leftthreetimes[GG']$,
so $\delta^{(\leftthreetimes)}_{\sim}(GG')=0$. Otherwise, $\sim\in \eq_\leftthreetimes[GG']$ 
if, and only if,  $\sim_X\in \eq_\leftthreetimes[G]$
and $\sim_Y\in \eq_\leftthreetimes[G']$, and, if this holds:
\begin{align*}
(GG')/\sim&=(G/\sim_X)(G'/\sim_Y),&
(GG')\mid_\leftthreetimes \sim&=(G\mid_\leftthreetimes \sim_X)(G'\mid_\leftthreetimes \sim_Y).
\end{align*}
This implies that $\delta^{(\leftthreetimes)}_\sim(GG')=\delta^{(\leftthreetimes)}_{\sim_X}(G)
\delta^{(\leftthreetimes)}_{\sim_Y}(G')$. \\

Let us prove the compatibility  of $\delta^{(\leftthreetimes)}$ with $\Delta^{(\leftthreetimes)}$, see \cite[Proposition 2.5]{Foissy41}. 
Let $G\in \calH[X\sqcup Y]$, $\sim_X\in \eq[X]$ and $\sim_Y\in \eq[Y]$. We put $\sim=\sim_X\sqcup \sim_Y\in \eq[X\sqcup Y]$. 
\begin{align*}
&(\Delta^{(\leftthreetimes)}_{X/\sim_X,Y/\sim_Y}\otimes \id)\circ \delta_\sim^{(\leftthreetimes)}(G)\\
&=\begin{cases}
(G/\sim)_{\mid_\leftthreetimes X/\sim_X}\otimes (G/\sim)_{\mid_\leftthreetimes Y/\sim_Y}
\otimes G\mid_\leftthreetimes\sim \mbox{ if }\sim\in \eq_\leftthreetimes[G],\\
0\mbox{ otherwise}
\end{cases}\\
&=\begin{cases}
(G_{\mid_\leftthreetimes X})/\sim_X \otimes (G_{\mid_\leftthreetimes Y})/\sim_Y\otimes 
(G\mid_\leftthreetimes \sim)_{\mid_\leftthreetimes X}
(G\mid_\leftthreetimes \sim)_{\mid_\leftthreetimes Y}\mbox{ if $\sim_X\in \eq_\leftthreetimes[G_{\mid_\leftthreetimes X}]$ 
and  $\sim_Y\in \eq_\leftthreetimes[G_{\mid_\leftthreetimes Y}]$}\\
0\mbox{ otherwise}
\end{cases}\\
&=m_{1,3,24}\circ (\delta_{\sim_X}^{(\leftthreetimes)}\otimes \delta_{\sim_Y}^{(\leftthreetimes)})
\circ \Delta_{X,Y}^{(\leftthreetimes)}(G).
\end{align*}

Let us finally prove that $\delta^{(\leftthreetimes)}$ has a counit, see \cite[Definition 2.2, fourth item]{Foissy41}.
For any hypergraph $G\in \calH[X]$, we put
\[\epsilon_\delta[X](G)=\begin{cases}
1\mbox{ if }E^+(G)=\emptyset,\\
0\mbox{ otherwise}.
\end{cases}\]
If $G\in \calH[X]$, let us denote by $\sim_0$ the equivalence on $X$ which classes are the connected components of $G$.
By definition, $\sim_0\in \eq_\leftthreetimes[G]$, $G\mid \sim_0=G$ and $G/\sim_0$ is a hypergraph with no nontrivial edge.
Moreover, if $\sim\in \eq_\leftthreetimes[G]$ is different from $\sim_0$, by the second item of Lemma \ref{lem1.5},
 at least one of the connected component of $G/\sim$ is not reduced to a vertex, so has a nontrivial edge: 
 $\epsilon_\delta[X/\sim](G/\sim)=0$. Hence,
\[(\epsilon_\delta\otimes \id)\circ \delta^{(\leftthreetimes)}(G)=G\mid \sim_0+0=G.\]
Let $\sim_1$ be the equality of $X$. Then $\sim_1\in \eq_\leftthreetimes[G]$, $G\mid \sim_1$ is a hypergraph 
with no nontrivial edge and $G/\sim_1=G$. Moreover, if $\sim\in \eq_\leftthreetimes[G]$ is different from $\sim_1$, 
at least one of its class is not reduced to a vertex, so, as it is connected, has a non trivial edge: $\epsilon_\delta[X](G\mid \sim)=0$.
 Hence,
\[(\id \otimes \epsilon_\delta)\circ \delta^{(\leftthreetimes)}(G)=G/\sim_1+0=G.\]
So $\epsilon_\delta$ is the counit of $\delta$. \end{proof}

\begin{example}
With the notations of Example \ref{ex1.1},
\begin{align*}
\delta^{(\subset)}(T_{\{x\}})&=T_{\{x\}}\otimes T_{\{x\}},\\
\delta^{(\cap)}(T_{\{x\}})&=T_{\{x\}}\otimes T_{\{x\}},\\
\delta^{(\subset)}(T_{\{x,y\}})&=T_{\{x,y\}}\otimes T_{\{x\}}T_{\{y\}}+T_{\{\{x,y\}\}}\otimes T_{\{x,y\}},\\
\delta^{(\cap)}(T_{\{x,y\}})&=T_{\{x,y\}}\otimes T_{\{x\}}T_{\{y\}}+T_{\{\{x,y\}\}}\otimes T_{\{x,y\}},\\
\delta^{(\subset)}(T_{\{x,y,z\}})&=T_{\{x,y,z\}}\otimes T_{\{x\}}T_{\{y\}}T_{\{z\}}+
T_{\{\{x,y,z\}\}}\otimes T_{\{x,y,z\}},\\
\delta^{(\cap)}(T_{\{x,y,z\}})&=T_{\{x,y,z\}}\otimes T_{\{x\}}T_{\{y\}}T_{\{z\}}+
T_{\{\{x,y,z\}\}}\otimes T_{\{x,y,z\}}\\
&+T_{\{\{x,y\},\{z\}\}}\otimes T_{\{x,y\}}T_{\{z\}}+
T_{\{\{x,y\},\{z\}\}}\otimes T_{\{x,z\}}T_{\{y\}}+
T_{\{\{x,y\},\{z\}\}}\otimes T_{\{y,z\}}T_{\{x\}}.
\end{align*}
\end{example}

Consequently, if $V$ is a vector space, we obtain four bialgebra structures on $\calF_V[\bfH]$. As a vector space,
they are generated by isomorphism classes of linearly $V$-decorated hypergraphs, that is to say pairs $(H,d_H)$,
where $H$ is a hypergraph and $d_H:V(G)\longrightarrow V$ is a map, with relations such that these decorations
are linear in any vertex. The product is given by disjoint union. The coproducts are given on any $V$-decorated hypergraph $G$ by
\[\Delta^{(\leftthreetimes,\rightthreetimes)}(G)=\sum_{I\subseteq V(G)} G_{\mid_\leftthreetimes I}
\otimes G_{\mid_\rightthreetimes V(G)\setminus I},\]
where $(\leftthreetimes,\rightthreetimes) \in \{\subset,\cap\}^2$.
Moreover, if $(V,\cdot,\delta_V)$ is a  not necessarily unitary, commutative and cocommutative bialgebra,
we obtain two double bialgebras $(\calF_V[\calH],m,\Delta^{(\leftthreetimes)},\delta^{(\leftthreetimes)})$,
with $\leftthreetimes\in \{\subset,\cap\}$. The coproduct $\delta^{(\leftthreetimes)}$ is defined on any $V$-decorated
hypergraph $G$ by
\[\delta^{(\leftthreetimes)}(G)=\sum_{\sim \in \eq_\leftthreetimes[G]} G/\sim \otimes G\mid_\leftthreetimes \sim,\]
where the vertices of $G/\sim \otimes G\mid_\leftthreetimes \sim$ are decorated in the following way:
denoting by $d_G(x)$ the decoration of the vertex $x\in V(G)$, any vertex $\cl_\sim(x)$ of $G/\sim$
is decorated by the products of elements $d_G(y)'$, where $y\in \cl_\sim(x)$, whereas the vertex $x\in V(G\mid \sim)=V(G)$
is decorated by $d_G(x)''$, and everything being extended by multilinearity of each decoration. 
The counit $\epsilon_\delta$ is given on any mixed graph $G$ by
\[\epsilon_\delta(G)=\begin{cases}
\displaystyle \prod_{x\in V(G)} \epsilon_V\circ d_G(x) \mbox{ if }E^+(G)=\emptyset,\\
0\mbox{ otherwise}.
\end{cases}\]
This construction is functorial in $V$.

In the particular case where $V=\K$, we obtain the bosonic Fock functor $\calF[\bfH]$. As a vector space, a basis is given 
by isomorphisms classes of hypergraphs. It is given four bialgebra structures 
$(\calF[\bfH],m,\Delta^{(\leftthreetimes,\rightthreetimes)})$
and two double bialgebra structures $(\calF[\calH],m,\Delta^{(\leftthreetimes)},\delta^{(\leftthreetimes)})$,
with $\leftthreetimes,\rightthreetimes \in \{\subset,\cap\}$.

\begin{example}\label{ex1.3}
For example, if $T_n$ is the hypergraph with $n$ vertices and a unique nontrivial edge $e$ containing all vertices,
we obtain, for $n\geq 2$, 
\begin{align*}
\Delta^{(\subset,\subset)}(T_n)&=T_n \otimes 1+1\otimes T_n+\sum_{k=1}^{n-1} \binom{n}{k} T_1^k\otimes T_1^{n-k},\\
\Delta^{(\cap,\cap)}(T_n)&=T_n \otimes 1+1\otimes T_n+\sum_{k=1}^{n-1} \binom{n}{k} T_k\otimes T_{n-k},\\
\Delta^{(\cap,\subset)}(T_n)&=T_n \otimes 1+1\otimes T_n+\sum_{k=1}^{n-1} \binom{n}{k} T_k\otimes T_1^{n-k},\\
\Delta^{(\subset,\cap)}(T_n)&=T_n \otimes 1+1\otimes T_n+\sum_{k=1}^{n-1} \binom{n}{k} T_1^k\otimes T_{n-k};\\ 
\delta^{(\subset)}(T_n)&=T_n\otimes T_1^n+T_1 \otimes T_n,\\
\delta^{(\cap)}(T_n)&=\sum_{n=1k_1+\ldots+nk_n} \frac{n!}{1!^{k_1}\ldots n!^{k_n} k_1!\ldots k_n!}
T_{k_1+\ldots+k_n}\otimes T_1^{k_1}\ldots T_n^{k_n}.
\end{align*}\end{example}

\begin{remark}
In \cite{Ebrahimi-Fard2022}, twelve coproducts on hypergraphs are introduced. 
The hypergraphs considered there are more general than ours, as the conditions we impose on 
edges of cardinality $\leq 1$ is not required. Let us denote by $\bfH'$ the set of hypergraphs of \cite{Ebrahimi-Fard2022} and by $\calH'$ the space generated by the isoclasses.
We define a map $\theta$ from $\bfH'$ to $\calF[\bfH]$ by sending any $G\in \calH'$ to:
\begin{itemize}
\item 0 if $G$ has an empty edge or an edge of cardinality 1.
\item The unique hypergraph $\theta(G)\in \calH$ such that $E^+(\theta(G))=E^+(G)$
(that is to say we add the empty set and all the singletons as edges). 
\end{itemize}
It is then not difficult to prove that $\theta$ is a bialgebra morphism from $(\calH',m,\Delta)$ to $(\calF[\bfH],m,\Delta^{(\subset,\subset)})$, 
and from $(\calH',m,\Delta')$ to $(\calF[\bfH],m,\Delta^{(\subset,\cap)})$.
The other coproducts $\Delta^d$, $\Delta^c$ dans $\Delta^{cd}$ of \cite{Ebrahimi-Fard2022}, using duality and complementation, 
do not fit well with our context, because of the restrictions we impose on hypergraphs. 
The coproduct $\delta$ of \cite{Ebrahimi-Fard2022} is not $\delta^{(\subset)}$, as shown by \cite[Example 4.3]{Ebrahimi-Fard2022}.
\end{remark}

\begin{remark}
We assume that for any hypergraph $G$, $\emptyset$ and the singletons $\{v\}$, with $v\in V(G)$, belong to $E(G)$.
We can relax this hypothesis by only assuming that $\emptyset \in E(G)$. The objects we obtain in this way will be called general hypergraphs.
General hypergraphs are identified with hypergraphs decorated by the set $\{0,1\}$: for any general hypergraph $G$,
decorate its vertex $v\in V(G)$ by $1$ if $\{v\}\in E(G)$ and by $0$ otherwise. Therefore, choosing any two-dimensional commutative and 
cocommutative  bialgebra
with a basis $(e_0,e_1)$ gives rise to two double bialgebra structures on generalized hypergraph. 
For example, choosing the product and coproducts defined by
\begin{align*}
e_0\cdot e_0&=e_0,&e_0\cdot e_1&=e_1,&\delta_V(e_0)&=e_0\otimes e_0,\\
e_1\cdot e_0&=e_1&e_1\cdot e_1&=e_1,&\delta_V(e_1)&=e_1\otimes e_1,
\end{align*} 
we obtain coproducts $\Delta^{(\subset)}$ and $\Delta^{(\cap)}$ given by induction of sub-hypergraphs,
cointeracting with coproducts $\delta^{(\subset)}$ and $\Delta^{(\cap)}$ of contractions and extractions.
For the contraction part, the vertex obtained by the identification of a subset $X$ of $V(G)$ is part of an edge of cardinality 1
if, and only if, at least one of the element of $X$ is part of an edge of $G$ of cardinality 1. 
\end{remark}

\begin{prop}
Let $V$ be a (non necessarily unitary) commutative and cocommutative bialgebra. 
For any linearly $V$-decorated hypergraph $(G,d_G)$. The following map is a double bialgebra morphism:
\[\Theta_V:\left\{\begin{array}{rcl}
\calF_V[\bfH]&\longrightarrow&\calF[\bfH]\\
(G,d_G)&\longmapsto&\displaystyle \left(\prod_{x\in V(G)} \epsilon_V\circ d_G(x)\right) G.
\end{array}\right.\]
\end{prop}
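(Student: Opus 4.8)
The plan is to obtain $\Theta_V$ as a special case of the functoriality of the assignment $V\mapsto\calF_V[\bfH]$ recorded just above, applied to the counit $\epsilon_V\colon V\to\K$. First I would check that $\epsilon_V$ is a morphism of (not necessarily unitary) commutative and cocommutative bialgebras from $V$ to $\K$, where $\K$ carries its trivial bialgebra structure: product the multiplication of $\K$, coproduct $1\mapsto 1\otimes 1$, counit $\id_\K$. That $\epsilon_V$ is an algebra morphism is part of the bialgebra axioms for $V$ (the statement that the product $m_V$ is a morphism of coalgebras contains $\epsilon_V\circ m_V=m_\K\circ(\epsilon_V\otimes\epsilon_V)$), and that $\epsilon_V$ is a coalgebra morphism amounts to $m_\K\circ(\epsilon_V\otimes\epsilon_V)\circ\delta_V=\epsilon_V$, which follows from the counit axiom $(\epsilon_V\otimes\id_V)\circ\delta_V=\id_V$ by post-composing with $\epsilon_V$. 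By functoriality of the construction in $V$, $\epsilon_V$ then induces a double bialgebra morphism $\calF_{\epsilon_V}\colon\calF_V[\bfH]\to\calF_\K[\bfH]=\calF[\bfH]$.

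It remains to identify $\calF_{\epsilon_V}$ with $\Theta_V$. On a linearly $V$-decorated hypergraph $(G,d_G)$, the morphism $\calF_{\epsilon_V}$ replaces each vertex decoration $d_G(x)\in V$ by $\epsilon_V(d_G(x))\in\K$, producing the $\K$-decorated hypergraph $(G,\epsilon_V\circ d_G)$; since the decorations are linear in each vertex, this equals $\bigl(\prod_{x\in V(G)}\epsilon_V(d_G(x))\bigr)$ times the hypergraph $G$ with trivial decorations, which is exactly $\Theta_V(G,d_G)$ inside $\calF[\bfH]$. This proves the proposition.

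If one prefers to avoid invoking the functoriality statement, the compatibilities can be checked directly. Compatibility with $m$ is clear, since the scalar $\prod_{x}\epsilon_V(d_G(x))$ is multiplicative under disjoint union; compatibility with $\Delta^{(\leftthreetimes,\rightthreetimes)}$ is clear too, since that scalar factors along any bipartition $V(G)=I\sqcup(V(G)\setminus I)$, the decorations of $G_{\mid_\leftthreetimes I}$ and $G_{\mid_\rightthreetimes V(G)\setminus I}$ being the corresponding restrictions; and compatibility with the units and counits is immediate from the formulas for the counits $\varepsilon$ and $\epsilon_\delta$ on $\calF_V[\bfH]$ and $\calF[\bfH]$. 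The only point requiring care is compatibility with $\delta^{(\leftthreetimes)}$: writing $\delta_V(d_G(y))=\sum d_G(y)'\otimes d_G(y)''$ for each $y\in V(G)$ and applying $\Theta_V\otimes\Theta_V$ to the formula for $\delta^{(\leftthreetimes)}(G,d_G)$, the scalar produced by the $G/\sim$ factor is $\prod_{C\in V(G)/\sim}\epsilon_V\bigl(\prod_{y\in C}d_G(y)'\bigr)$, which collapses to $\prod_{y\in V(G)}\epsilon_V(d_G(y)')$ by multiplicativity of $\epsilon_V$, hence no longer depends on $\sim$; the $G\mid_\leftthreetimes\sim$ factor contributes $\prod_{y\in V(G)}\epsilon_V(d_G(y)'')$, and summing over the comultiplications and using $\sum\epsilon_V(a')\epsilon_V(a'')=\epsilon_V(a)$ vertex by vertex, the overall scalar becomes $\prod_{y\in V(G)}\epsilon_V(d_G(y))$, which pulls out of the sum to yield $\delta^{(\leftthreetimes)}\bigl(\Theta_V(G,d_G)\bigr)$.

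The step I expect to be the real (if modest) obstacle is precisely this collapse: one must keep track of the fact that in $\delta^{(\leftthreetimes)}$ the decoration of a contracted vertex of $G/\sim$ is the \emph{product}, over the vertices of its class, of the first legs of $\delta_V$ applied to the original decorations, and it is exactly the multiplicativity of $\epsilon_V$ that turns this class-indexed product into an unconditional product over $V(G)$, so that the scalar factor ceases to depend on $\sim$ and can be extracted. On the functorial route, the only thing worth double-checking is that the general functoriality statement is indeed formulated for morphisms of possibly non-unitary bialgebras and for the (non-graded) contraction–extraction coproduct $\delta^{(\leftthreetimes)}$, with $\K$ an admissible target.
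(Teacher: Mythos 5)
Your proposal is correct and follows exactly the paper's argument: the counit $\epsilon_V$ is a morphism of (not necessarily unitary) commutative and cocommutative bialgebras, and by functoriality of the construction $V\mapsto\calF_V[\bfH]$ it induces the double bialgebra morphism $\Theta_V$. The extra direct verification you sketch is not needed but is consistent with this.
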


\begin{proof}
The counit $\epsilon_V:V\longrightarrow \K$ is a bialgebra morphism. By functoriality, $\Theta_V$ is a double bialgebra morphism. 
\end{proof}

\section{Polynomial invariants}

\subsection{Chromatic polynomials}

From \cite[Theorem 3.9]{Foissy40}, if  $\leftthreetimes \in \{\cap,\subset\}$, there exists a unique morphism 
$P_\leftthreetimes$ of double bialgebras  from  $(\calF_V[\bfH],m,\Delta^{(\leftthreetimes)},\delta^{(\leftthreetimes)})$ 
to the double bialgebra $(\K[X],m,\Delta,\delta)$, with
\begin{align*}
\Delta(X)&=X\otimes 1+1\otimes X,&\delta(X)&=X\otimes X. 
\end{align*}
Let us determine $P_\leftthreetimes$. Let $G\in \calH[X]$ be a nonempty hypergraph. Then, still from \cite[Theorem 3.9]{Foissy40},
\[P_\leftthreetimes(G)=\sum_{k=0}^\infty \left(\epsilon_\delta^{\otimes (k-1)}\circ \left(\tdelta^{(\leftthreetimes)}\right)^{(k-1)}(G)\right) H_k(X),\]
where $H_k$ is the $k$-th Hilbert polynomial:
\[H_k(X)=\frac{X(X-1)\ldots (X-k+1)}{k!}.\]

\begin{prop}\label{prop2.1}
Let $\leftthreetimes \in \{\cap,\subset\}$. 
The unique double bialgebra morphism $P_\leftthreetimes$ from $(\calF[\bfH],m,$ $\Delta^{(\leftthreetimes)},
\delta^{(\leftthreetimes)})$ to $ (\K[X],m,\Delta,\delta)$ sends any hypergraph $G$ to a polynomial $P_\leftthreetimes(G)$ 
such that, for any $N\in \N_{>0}$:
\begin{itemize}
\item $P_\cap(G)(N)$ is the number of maps $f:V(G)\longrightarrow [N]$ such that if $x$ and $y$ are two
distinct elements of an edge $e\in E(G)$, then $f(x)\neq f(y)$.
\item $P_\subset(G)(N)$  is the number of maps $f:V(G)\longrightarrow [N]$ such that for any nontrivial edge $e\in E(G)$,
$f$ takes at least two different values on $e$. 
\end{itemize}
\end{prop}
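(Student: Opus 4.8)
The plan is to use the explicit formula for $P_\leftthreetimes$ in terms of the iterated reduced coproduct, but it is cleaner to argue by a \emph{universality/uniqueness} principle: I would show directly that the counting functions $N\mapsto \chi_\cap(G)(N)$ and $N\mapsto \chi_\subset(G)(N)$ (defined by the colouring conditions in the statement) are polynomial in $N$, organize them into a species morphism $\chi_\leftthreetimes:\calF[\bfH]\to\K[X]$, and check that each $\chi_\leftthreetimes$ is a morphism of double bialgebras to $(\K[X],m,\Delta,\delta)$; then uniqueness from \cite[Theorem 3.9]{Foissy40} forces $\chi_\leftthreetimes=P_\leftthreetimes$.

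First I would show polynomiality and compatibility with $\Delta^{(\leftthreetimes)}$. For a fixed $N$, a colouring is a map $f:V(G)\to[N]$; splitting $V(G)=f^{-1}(\text{low block})\sqcup\ldots$ is not quite the right move — instead, the standard trick: a valid $N$-colouring of $GG'$ is a pair of valid $N$-colourings, which gives multiplicativity $\chi_\leftthreetimes(GG')=\chi_\leftthreetimes(G)\chi_\leftthreetimes(G')$; and for the coproduct, one decomposes an $N$-colouring of $G$ with $N=M+P$ colours $\{1,\dots,M\}\sqcup\{M+1,\dots,M+P\}$ according to $I=f^{-1}(\{1,\dots,M\})$, so that $f$ restricted to $I$ must be a valid colouring of $G_{\mid_\leftthreetimes I}$ and $f$ restricted to $V(G)\setminus I$ a valid colouring of $G_{\mid_\rightthreetimes V(G)\setminus I}$ — \emph{provided} the colouring conditions localise correctly. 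This is where the two cases genuinely differ: for $\leftthreetimes=\cap$ the condition ``injective on every edge'' restricted to a subset $I$ becomes ``injective on every $e\cap I$'', which is exactly the edge set of $G_{\mid_\cap I}$; for $\leftthreetimes=\subset$ the condition ``nonconstant on every nontrivial edge'' splits correctly because an edge $e$ with $e\subseteq I$ must be handled inside $I$, an edge with $e\subseteq V(G)\setminus I$ inside the complement, and an edge meeting both is automatically nonconstant since the two halves get colours from disjoint palettes. These two observations show $\chi_\leftthreetimes(G)(M+P)=\sum_{I}\chi_\leftthreetimes(G_{\mid_\leftthreetimes I})(M)\,\chi_\leftthreetimes(G_{\mid_\rightthreetimes V(G)\setminus I})(P)$, which (being a polynomial identity in two variables once polynomiality is known) is exactly compatibility with $\Delta$. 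Polynomiality itself follows by a Möbius/inclusion–exclusion argument over the lattice of equivalence relations, or — more in the spirit of the paper — from the recursion the coproduct formula above already provides together with the fact that the counting function vanishes appropriately at small $N$; the explicit formula $P_\leftthreetimes(G)=\sum_k(\varepsilon_\delta^{\otimes(k-1)}\circ(\tilde\delta^{(\leftthreetimes)})^{(k-1)}(G))H_k(X)$ then identifies the polynomial once we check the degree-one (generator) case.

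Next I would check compatibility with the second coproduct $\delta^{(\leftthreetimes)}$, i.e. $\delta\circ\chi_\leftthreetimes=(\chi_\leftthreetimes\otimes\chi_\leftthreetimes)\circ\delta^{(\leftthreetimes)}$ where $\delta(X)=X\otimes X$; equivalently, evaluating at $(M,P)$, $\chi_\leftthreetimes(G)(MP)=\sum_{\sim\in\eq_\leftthreetimes[G]}\chi_\leftthreetimes(G/\!\sim)(M)\,\chi_\leftthreetimes(G\mid_\leftthreetimes\sim)(P)$. To see this, I would factor an $[M]\times[P]$-colouring of $G$ (think of colours as pairs $(a,b)$) through its first coordinate: let $\sim$ be the equivalence identifying vertices in the same connected component of the preimage of each first-coordinate fibre intersected appropriately; the first coordinate induces a colouring of $G/\!\sim$, the second coordinate a colouring of $G\mid_\leftthreetimes\sim$, and the condition $\sim\in\eq_\leftthreetimes[G]$ is exactly what makes this bijective. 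The one subtle point is to verify that validity of the $(a,b)$-colouring of $G$ is equivalent to validity of the $a$-colouring of $G/\!\sim$ together with validity of the $b$-colouring of $G\mid_\leftthreetimes\sim$; this uses the same localisation of the edge conditions as before, now applied to contractions and to the two notions of induced sub-hypergraph.

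Finally, since $\K[X]$ is freely generated and $(\calF[\bfH],m,\Delta^{(\leftthreetimes)},\delta^{(\leftthreetimes)})$ is a connected graded (in the species sense, after applying $\calF$) double bialgebra, \cite[Theorem 3.9]{Foissy40} gives \emph{uniqueness} of the double bialgebra morphism to $(\K[X],m,\Delta,\delta)$; having exhibited $\chi_\leftthreetimes$ as such a morphism, we conclude $\chi_\leftthreetimes=P_\leftthreetimes$, which is the claim. I expect the main obstacle to be the bookkeeping in the $\delta^{(\leftthreetimes)}$-compatibility: correctly identifying, for a two-coordinate colouring, the equivalence $\sim$ and proving it lies in $\eq_\leftthreetimes[G]$, and then matching the validity conditions on $G/\!\sim$ and on each connected piece $G_{\mid_\leftthreetimes C}$ — in particular handling the $\cap$ case where contracted edges are images $\pi_\sim(e)$ and the condition ``injective on $\pi_\sim(e)$'' must be reconciled with ``$\sim$ has connected classes in $G_{\mid_\cap C}$''. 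The $\Delta^{(\leftthreetimes)}$-part and polynomiality are comparatively routine.
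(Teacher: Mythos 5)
Your plan is correct, but it takes a genuinely different route from the paper. The paper's proof is a short direct computation: it takes the explicit formula $P_\leftthreetimes(G)=\sum_k\bigl(\epsilon_\delta^{\otimes k}\circ(\tdelta^{(\leftthreetimes)})^{(k-1)}(G)\bigr)H_k(X)$ supplied by \cite[Theorem 3.9]{Foissy40}, rewrites the iterated reduced coproduct as a sum over ordered set partitions $V(G)=I_1\sqcup\ldots\sqcup I_k$ into nonempty blocks with $E^+(G_{\mid_\leftthreetimes I_p})=\emptyset$, i.e.\ over surjective colourings $f:V(G)\to[k]$ whose fibres carry no nontrivial $\leftthreetimes$-induced edge, and then reads off the two colouring interpretations (for $\cap$: each fibre meets each edge at most once; for $\subset$: no fibre contains a nontrivial edge). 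You instead define the counting invariants $\chi_\leftthreetimes$ directly, verify they are double bialgebra morphisms (palette-splitting for $\Delta^{(\leftthreetimes)}$, the pair-colouring factorisation through $\sim$ for $\delta^{(\leftthreetimes)}$), and invoke uniqueness; the bijection you flag as the main obstacle does go through in both cases, including the $\cap$ case, because injectivity of the first coordinate on $\pi_\sim(e)$ is exactly what prevents two distinct classes in the same first-coordinate fibre from being joined by an edge of $G_{\mid_\cap f_1^{-1}(a)}$, so the classes of $\sim$ are recovered as the connected components of these fibres. What the paper's route buys is brevity, since existence, uniqueness and the explicit formula are already packaged in the cited theorem; what your route buys is a self-contained combinatorial verification, at the cost of the $\delta^{(\leftthreetimes)}$-compatibility bookkeeping. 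Note you could shorten your route substantially: the unique double bialgebra morphism is also characterised as the unique bialgebra morphism $\phi:(\calF[\bfH],m,\Delta^{(\leftthreetimes)})\to(\K[X],m,\Delta)$ with $\phi(G)(1)=\epsilon_\delta(G)$ (this is how the paper argues in Proposition \ref{prop2.6}), and for your $\chi_\leftthreetimes$ the evaluation at $N=1$ is immediate, so the whole second-coproduct verification can be dispensed with. Two small slips to fix in a write-up: in the coproduct identity you wrote $G_{\mid_\rightthreetimes V(G)\setminus I}$, but here both tensor factors use the same symbol $\leftthreetimes$ since $\Delta^{(\leftthreetimes)}=\Delta^{(\leftthreetimes,\leftthreetimes)}$; and polynomiality is most cleanly obtained by writing $\chi_\leftthreetimes(G)(N)=\sum_k s_k\binom{N}{k}$ with $s_k$ the number of valid surjective $k$-colourings, rather than by M\"obius inversion.
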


\begin{proof}
We obtain that
\begin{align*}
P\leftthreetimes(G)&=\sum_{k=1}^\infty \sum_{\substack{V(G)=I_1\sqcup \ldots \sqcup I_k\\I_1,\ldots,I_k\neq \emptyset}}
 \epsilon'_\delta(G_{\mid_\leftthreetimes I_1})\ldots \epsilon'_\delta
(G_{\mid_\leftthreetimes I_k})H_k(X)\\
&=\sum_{k=1}^\infty \sum_{\substack{f:V(G)\longrightarrow [k]\\
\mbox{\scriptsize $f$ surjective}\\\forall i\in [k], \: E^+(G_{\mid_\leftthreetimes f^{-1}(i)})=\emptyset}} H_k(X).
\end{align*}
Hence, for any $N\in \N_{>0}$, $P_\leftthreetimes(G)(N)$ is the number of maps $f:V(G)\longrightarrow [N]$
such that for any $i\in [N]$, $E^+(G_{\mid_\leftthreetimes f^{-1}(i)})=\emptyset$. 

If $S=\cap$, this is equivalent to the fact that $f^{-1}(i)$ contains at most one vertex of any $e\in E^+(G)$,
which gives the interpretation of the proposition. If $S=\subset$, this is equivalent to the fact that any $f^{-1}(i)$
does not contain any $e\in E^+(G)$, which gives the interpretation of the proposition.
\end{proof}

\begin{remark}
If $G$ is a graph, both $P_\cap(G)$ and $P_\subset(G)$ are equal to the chromatic polynomial of $G$. 
\end{remark}

Even without a coproduct $\delta$ making $(\calF_V[\bfH],m,\Delta^{(\subset,\cap)})$ a double bialgebra
(see Proposition \ref{prop2.6}),  we can define a Hopf algebra morphism, recovering the chromatic polynomial of \cite{Aval2020,Aval2019}:

\begin{prop}
For any hypergraph $G$, there exists a polynomial $P_{\subset,\cap}(G)$ such that for any $N\in \N_{>0}$,
$P_{\subset,\cap}(G)(N)$ is the number of maps $f:V(G)\longrightarrow [N]$
such that for any $e \in E^+(G)$, $\max\{f(x)\mid x\in e\}$ is obtained in exactly one element of $e$. 
Then $P_{\subset,\cap}:(\calF[\bfH],m,\Delta^{(\subset,\cap)})\longrightarrow (\K[X],m,\Delta)$ is a Hopf algebra morphism. 

\end{prop}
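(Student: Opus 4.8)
The plan is to construct $P_{\subset,\cap}$ explicitly, verify it is a polynomial interpolating the stated count, and then check the bialgebra-morphism axioms one at a time directly from the colouring description. For a hypergraph $G$ and $N\in\N$, let $c_G(N)$ denote the number of maps $f\colon V(G)\to[N]$ such that for every $e\in E^+(G)$ the value $\max\{f(x)\mid x\in e\}$ is attained at exactly one vertex of $e$. Classifying such maps by their image $S\subseteq[N]$, and observing that the order isomorphism $S\cong[\,|S|\,]$ is monotone and therefore preserves, for each edge, the set of vertices attaining the maximum, one obtains
\[c_G(N)=\sum_{k=0}^{|V(G)|}s_G(k)\binom{N}{k},\]
where $s_G(k)$ is the number of \emph{surjective} such maps $f\colon V(G)\to[k]$. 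Hence $P_{\subset,\cap}(G):=\sum_{k=0}^{|V(G)|}s_G(k)H_k(X)$ is a polynomial with $P_{\subset,\cap}(G)(N)=c_G(N)$ for every $N\in\N$, and extending $\K$-linearly along the basis of isoclasses gives a map $P_{\subset,\cap}\colon\calF[\bfH]\to\K[X]$; it is the only such map, since a polynomial is determined by its values on $\N_{>0}$.

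Next I would dispose of the easy compatibilities. Units: the empty hypergraph has the empty map as its unique colouring, which satisfies the condition vacuously, so $P_{\subset,\cap}(\emptyset)=1$. Counits: $P_{\subset,\cap}(G)(0)=\sum_k s_G(k)H_k(0)=s_G(0)$ because $H_k(0)=0$ for $k\geq 1$, and $s_G(0)$ equals $1$ if $V(G)=\emptyset$ and $0$ otherwise, matching $\varepsilon(G)$. Multiplicativity: for disjoint $G,G'$ one has $V(GG')=V(G)\sqcup V(G')$ and $E^+(GG')=E^+(G)\sqcup E^+(G')$, so a colouring of $GG'$ is a pair of colourings and $c_{GG'}(N)=c_G(N)c_{G'}(N)$ for all $N$; since two polynomials agreeing on $\N_{>0}$ are equal, $P_{\subset,\cap}(GG')=P_{\subset,\cap}(G)P_{\subset,\cap}(G')$.

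The crucial step is comultiplicativity, i.e.\ the identity $P_{\subset,\cap}(G)(X+Y)=\sum_{I\subseteq V(G)}P_{\subset,\cap}(G_{\mid_\subset I})(X)\,P_{\subset,\cap}(G_{\mid_\cap V(G)\setminus I})(Y)$ in $\K[X,Y]$ (using that, after applying the Fock functor, $\Delta^{(\subset,\cap)}(G)=\sum_{I\subseteq V(G)}G_{\mid_\subset I}\otimes G_{\mid_\cap V(G)\setminus I}$). As both sides are polynomials in two variables, it is enough to prove $c_G(M+N)=\sum_{I\subseteq V(G)}c_{G_{\mid_\subset I}}(M)\,c_{G_{\mid_\cap V(G)\setminus I}}(N)$ for all $M,N\in\N_{>0}$. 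For this I would use the bijection sending $f\colon V(G)\to[M+N]$ to $(I,f_1,f_2)$ with $I=f^{-1}([M])$, $J=V(G)\setminus I$, $f_1=f|_I\colon I\to[M]$, and $f_2\colon J\to[N]$, $f_2(v)=f(v)-M$, the inverse reassembling $f$ from the value of $f_1$ on $I$ and of $f_2+M$ on $J$. The heart of the argument is that $f$ satisfies the max-condition for $G$ if and only if $f_1$ satisfies it for $G_{\mid_\subset I}$ and $f_2$ satisfies it for $G_{\mid_\cap J}$, which I would check edge by edge on $e\in E^+(G)$: if $e\subseteq I$ then $e\in E^+(G_{\mid_\subset I})$ and $f|_e=f_1|_e$, so the conditions coincide; if $e\not\subseteq I$ then $e\cap J\neq\emptyset$ and every vertex of $e\cap J$ has a strictly larger $f$-value than every vertex of $e\cap I$, so the maximisers of $f|_e$ are precisely the maximisers of $f_2|_{e\cap J}$, whence the condition on $e$ is exactly the condition on the edge $e\cap J$ of $G_{\mid_\cap J}$ (automatically satisfied when $|e\cap J|=1$). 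Conversely every $e'\in E^+(G_{\mid_\subset I})$ is such an edge $e\subseteq I$, and every $e''\in E^+(G_{\mid_\cap J})$ is of the form $e\cap J$ with $e\in E^+(G)$ and $e\not\subseteq I$; so the families of constraints on the two sides match, and summing over $f$ yields the identity.

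Combining the four points, $P_{\subset,\cap}$ is a bialgebra morphism from $(\calF[\bfH],m,\Delta^{(\subset,\cap)})$ to $(\K[X],m,\Delta)$. Both are connected graded bialgebras — $\calF[\bfH]$ graded by number of vertices, with $\Delta^{(\subset,\cap)}$ graded since $|I|+|V(G)\setminus I|=|V(G)|$, and $\K[X]$ graded by degree — hence both are Hopf algebras, and a bialgebra morphism between Hopf algebras automatically commutes with the antipodes; thus $P_{\subset,\cap}$ is a Hopf algebra morphism. I expect the edge-by-edge verification in the comultiplicativity step to be the only genuinely delicate point: it is precisely there that the asymmetric choice $(\subset,\cap)$ and the ``maximum attained exactly once'' colouring condition must be matched, the $\subset$-side absorbing the edges that stay below the cut and the $\cap$-side recording the traces $e\cap J$ of the edges crossing it.
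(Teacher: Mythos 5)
Your proof is correct, and it takes a genuinely different route from the paper's. The paper never checks the bialgebra axioms by hand: it notes that $\epsilon_\delta$ is a character of $(\calF[\bfH],m,\Delta^{(\subset,\cap)})$ and invokes the general construction of \cite{Foissy40} (the same one used just before for $P_0$), which attaches to any character a bialgebra morphism to $(\K[X],m,\Delta)$ by pairing $H_k(X)$ with the character applied to the iterated reduced coproduct; the morphism property is therefore automatic, and the only remaining work is combinatorial, namely identifying the resulting sum over ordered set partitions $V(G)=I_1\sqcup\ldots\sqcup I_k$ with $E^+(G_{\mid^{(p)} I_p})=\emptyset$ for all $p$ with the ``maximum attained exactly once'' colouring count --- which is exactly your expansion $\sum_k s_G(k)H_k(X)$. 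You go the other way: you define $P_{\subset,\cap}$ from the colouring counts and prove comultiplicativity directly, via the cut-at-level-$M$ bijection $f\mapsto(I,f_1,f_2)$; the edge-by-edge check (edges contained in $I$ on the $\subset$ side, traces $e\cap J$ of crossing edges, whose maximum is forced into $J$, on the $\cap$ side) is precisely where the asymmetry of $\Delta^{(\subset,\cap)}$ enters, and you carry it out correctly, including the degenerate case $|e\cap J|=1$. Conceptually, your bijection is an explicit combinatorial proof that evaluation at $N$ is the $N$-th convolution power of $\epsilon_\delta$, that is, of $\epsilon_\delta^{\star(M+N)}=\epsilon_\delta^{\star M}\star\epsilon_\delta^{\star N}$, which the paper gets for free from the general theory. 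Your approach buys an elementary, self-contained, purely bijective argument; the paper's buys brevity and uniformity with the constructions of $P_\subset$, $P_\cap$ and $P_0$. (Your closing observation that a bialgebra morphism between these graded connected bialgebras automatically respects antipodes is standard and fine.)
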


\begin{proof}
The map $\epsilon_\delta$ is a character of $\calF_V[\bfH]$. Hence, we obtain a bialgebra map
\[P_{\subset,\cap}(G)=\sum_{k=1}^\infty \sum_{\substack{V(G)=I_1\sqcup \ldots \sqcup I_k\\I_1,\ldots,I_k\neq \emptyset}}
\epsilon_\delta(G_{\mid^{(1)} I_1})\ldots \epsilon_\delta(G_{\mid^{(k)} I_k}) H_k(X). \]
In other words,
\begin{align*}
P_{\subset,\cap}(G)&=\sum_{k=1}^\infty\sum_{\substack{f:V(G)\longrightarrow [k]\\
\mbox{\scriptsize $f$ surjective}\\\forall i\in [k], \: E^+(G_{\mid^{(i)} f^{-1}(i)})=\emptyset}} H_k(X).
\end{align*}
By construction, for any $N\in \N_{>0}$, $P_{\subset,\cap}(G)(N)$ is the number of maps $f:V(G)\longrightarrow [N]$
such that for any $i\in [N]$, $E^+(G_{\mid^{(i)} f^{-1}(i)})=\emptyset$. 
this is equivalent to the fact that for any edge $e$, $f^{-1}(\max(f_{\mid e}))\cap e$ does not contain 
any nontrivial  edge of $G$, which means that it is reduced to a single vertex. This gives the interpretation of the proposition.
\end{proof}

\begin{example}\label{ex2.1}
Let us use the notations of Example \ref{ex1.3}. If $n\geq 2$,
\begin{align*}
P_\cap(T_n)&=X(X-1)\ldots (X-n+1),&
P_\subset(T_n)&=X^n-X.
\end{align*}
Here are examples of $P_{\subset,\cap}(T_n)$:
\begin{align*}
P_{\subset,\cap}(T_1)&=X\\
&=H_1(X),\\
P_{\subset,\cap}(T_2)&=X(X-1)\\
&=2H_2(X),\\
P_{\subset,\cap}(T_3)&=\dfrac{X(X-1)(2X-1)}{2}\\
&=3H_2(X)+6H_3(X),\\
P_{\subset,\cap}(T_4)&= X^2(X - 1)^2\\
&=4H_2(X)+24H_3(X)+24H_4(X),\\
P_{\subset,\cap}(T_5)&=\dfrac{X(X-1)(2X-1)(3X^2-3X-1)}{6}\\
&=5H_2(X)+70H_3(X)+180H_4(X)+120H_5(X),\\
P_{\subset,\cap}(T_6)&=\dfrac{X^2(X-1)^2(2X^2-2X-1)}{2}\\
&=6H_2(X)+180H_3(X)+900H_4(X)+1440H_5(X)+720H_6(X),\\
P_{\subset,\cap}(T_7)&=\dfrac{X(X-1)(2X-1)(3X^4 -6 X^3 +3X+1)}{6}\\
&=7H_2(X)+434H_3(X)+3780H_4(X)+10920H_5(X)+12600H_6(X)+5040H_7(X).
\end{align*}
The coefficients of $H_k(X)$ in $P_{\subset,\cap}(T_n)$ are given by Entry A282507 of the OEIS \cite{Sloane}.
\end{example}

\subsection{Homogeneous polynomial invariants}

In all this paragraph, we fix $\leftthreetimes \in \{\subset,\cap\}$. 

\begin{prop}
The following map is a bialgebra map from $(\calF[\bfH],m,\Delta^{(\leftthreetimes)})$ to $(\K[X],m,\Delta)$:
\[P_0:\left\{\begin{array}{rcl}
\calF[\bfH]&\longrightarrow&\K[X]\\
G&\longmapsto&X^{|V(G)|}.
\end{array}\right.\]
\end{prop}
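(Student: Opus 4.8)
The plan is to verify directly that $P_0(G) = X^{|V(G)|}$ is both multiplicative and compatible with the coproducts on each side. This is a purely computational check with no real obstacle; the only mildly delicate point is to notice that the combinatorial structure of the coproduct $\Delta^{(\leftthreetimes)}$ is irrelevant — only the vertex set bookkeeping matters — which is exactly why $P_0$ works for both choices of $\leftthreetimes$.

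\begin{proof}
First, $P_0$ is an algebra morphism: for $G\in\calH[X]$ and $G'\in\calH[Y]$ with $X\cap Y=\emptyset$, we have $V(GG')=X\sqcup Y$, so
\[P_0(GG')=X^{|X\sqcup Y|}=X^{|X|+|Y|}=X^{|X|}\,X^{|Y|}=P_0(G)P_0(G'),\]
and $P_0(\emptyset)=X^0=1$, so $P_0$ sends the unit to the unit. Passing to the Fock functor $\calF[\bfH]$, this says $P_0$ is multiplicative for the disjoint union product.

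Second, $P_0$ intertwines the coproducts. Let $G\in\calH[X]$. Since for any $I\subseteq X$ both $G_{\mid_\leftthreetimes I}$ and $G_{\mid_\leftthreetimes X\setminus I}$ have vertex sets $I$ and $X\setminus I$ respectively, we compute in $\K[X]^{\otimes 2}$:
\begin{align*}
(P_0\otimes P_0)\circ \Delta^{(\leftthreetimes)}(G)
&=\sum_{I\subseteq X}P_0\bigl(G_{\mid_\leftthreetimes I}\bigr)\otimes P_0\bigl(G_{\mid_\leftthreetimes X\setminus I}\bigr)\\
&=\sum_{I\subseteq X}X^{|I|}\otimes X^{|X|-|I|}\\
&=\sum_{k=0}^{|X|}\binom{|X|}{k}X^k\otimes X^{|X|-k}\\
&=(X\otimes 1+1\otimes X)^{|X|}\\
&=\Delta\bigl(X^{|X|}\bigr)=\Delta\circ P_0(G).
\end{align*}
Finally, $P_0$ respects the counits: the counit $\varepsilon$ of $\bfH$ vanishes on every nonempty hypergraph and sends $\emptyset$ to $1$, while the counit of $\K[X]$ sends $X$ to $0$; since $P_0(G)(0)=0$ whenever $V(G)\neq\emptyset$ and $P_0(\emptyset)=1$, the two agree. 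Hence $P_0$ is a bialgebra morphism from $(\calF[\bfH],m,\Delta^{(\leftthreetimes)})$ to $(\K[X],m,\Delta)$.
\end{proof}

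Note that the argument uses nothing about which edges survive the restriction operations — only that $G_{\mid_\leftthreetimes I}\in\calH[I]$, which was recorded right after the definition of the restrictions — so the same proof works verbatim for $\leftthreetimes=\subset$ and $\leftthreetimes=\cap$, and indeed for the mixed coproducts $\Delta^{(\leftthreetimes,\rightthreetimes)}$ as well.
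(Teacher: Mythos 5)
Your proof is correct, but it takes a different route from the paper. You verify the bialgebra axioms directly: multiplicativity is immediate from $|V(GG')|=|V(G)|+|V(G')|$, and compatibility with the coproducts follows from the binomial identity $\sum_{I\subseteq V(G)}X^{|I|}\otimes X^{|V(G)\setminus I|}=(X\otimes 1+1\otimes X)^{|V(G)|}=\Delta(X^{|V(G)|})$, plus the counit check; as you note, only the vertex-set bookkeeping of $G_{\mid_\leftthreetimes I}$ matters, which is why the argument works uniformly in $\leftthreetimes$ and even for the mixed coproducts $\Delta^{(\leftthreetimes,\rightthreetimes)}$ (the paper records this last point only as a remark after the proposition). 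The paper instead invokes the general machinery of homogeneous polynomial invariants from \cite{Foissy40} (Propositions 3.10 and 5.2): it starts from the linear form $\mu\in\calF[\bfH]_1^*$ with $\mu(\tun)=1$, which automatically produces a bialgebra morphism $P_0=\sum_k \mu^{\otimes k}\circ(\Delta^{(\leftthreetimes)})^{(k-1)}\,X^k/k!$, and then computes that only the decompositions of $V(G)$ into singletons contribute, giving $P_0(G)=X^{|V(G)|}$. So in the paper the morphism property is obtained for free from the construction and the work lies in identifying the formula, whereas you start from the formula and check the morphism property by hand. Your argument is more elementary and self-contained; the paper's approach has the advantage of exhibiting $P_0$ as the homogeneous invariant attached to $\mu$ within the framework that the rest of the section uses (e.g.\ for the identity $P_0=P_\leftthreetimes\leftsquigarrow_\leftthreetimes\lambda_0$). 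One cosmetic point: in your first display the letter $X$ denotes both the vertex set of $G$ and the polynomial indeterminate; this is harmless but worth disambiguating.
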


\begin{proof}
With the help of \cite[Propositions 3.10 and 5.2]{Foissy40}, let us define a homogeneous morphism 
$P_0:\calF[\bfH]\longrightarrow \K[X]$ with the help of the element $\mu\in \calF[\bfH]_1^*$ defined by
\[\mu(\tun)=1,\] 
where $\tun$ is the unique hypergraph with one vertex. For any nonempty hypergraph $G$ with $n$ vertices,
\begin{align*}
P_0(G)&=\sum_{k=1}^\infty \mu^k\circ \left(\Delta^{(\leftthreetimes)}\right)^{(k-1)}(G) \frac{X^k}{k!}\\
&=\sum_{k=1}^\infty\sum_{V(G)=I_1\sqcup \ldots \sqcup I_k} \mu(G_{\mid_\leftthreetimes I_1})\ldots  
\mu(G_{\mid_\leftthreetimes I_k})\frac{X^k}{k!}\\
&=\sum_{k=1}^\infty\sum_{\substack{V(G)=I_1\sqcup \ldots \sqcup I_k\\ |I_1|=\ldots=|I_k|=1}}\frac{X^k}{k!}\\
&=n!\frac{X^n}{n!}+0\\
&=X^n. \qedhere
\end{align*}
\end{proof}

\begin{remark}
The map $P_0$ is also a Hopf algebra morphism from $(\calF[\bfH],m,\Delta^{(\subset,\cap)})$ and from 
$(\calF[\bfH],m,\Delta^{(\cap,\subset)})$ to $(\K[X],m,\Delta)$.
\end{remark}

We denote by $\leftsquigarrow_\leftthreetimes$  the action of the monoid $\Char(\calF[\bfH])$ of characters of 
$(\calF[\bfH],m,\delta^{(\leftthreetimes)})$ on
the set of Hopf algebra morphisms from $(\calF[\bfH],m\Delta^{(\leftthreetimes)})$ to $(\K[X],m,\Delta)$
induced by $\delta^{(\leftthreetimes)}$, as defined in \cite{Foissy40}: for any $\lambda \in \Char(\calF[\bfH])$,
for any Hopf algebra morphism $\phi:(\calF[\bfH],m,\Delta^{(\leftthreetimes)})\longrightarrow (\K[X],m,\Delta)$,
\begin{align*}
\phi\leftsquigarrow_\leftthreetimes \lambda&=(\phi\otimes \lambda)\circ \delta^{(\leftthreetimes)}.
\end{align*}
Let $\lambda_0$ be the character $\epsilon_\delta\circ P_0$ of $\calF[\bfH]$: for any hypergraph $H$,
\[\lambda_0(H)=P_0(H)(1)=1.\]
By  \cite[Corollary 3.11]{Foissy40}, 
\[P_0=P_\leftthreetimes\leftsquigarrow_\leftthreetimes\lambda_0.\]
In order to "reverse" this formula, let us study the inverses of characters of $\calF[\bfH]$. 

\begin{prop} \label{prop2.4}
We denote by $\star_\leftthreetimes$ the convolution induced by $\delta^{(\leftthreetimes)}$ on the set of characters
of $\calF[\bfH]$. Let $\zeta$ be a character of $\calF[\bfH]$.
\begin{enumerate}
\item Then $\zeta$ is invertible for $\star$ if, and only if, $\zeta(\tun)\neq 0$.
\item If $\zeta(\tun)=\pm 1$ and for any hypergraph $G$, $\zeta(G)\in \Z$,
then for any hypergraph $G$, $\zeta^{\star_\leftthreetimes-1}(G)\in \Z$.
\end{enumerate} 
\end{prop}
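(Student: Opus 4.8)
The plan is to study the convolution algebra of characters for the coproduct $\delta^{(\leftthreetimes)}$ and to exploit the connected filtration/grading it carries. First I would recall that $\delta^{(\leftthreetimes)}$ is a bialgebra coproduct on $\calF[\bfH]$ whose counit is $\epsilon_\delta$, and that the unit of the convolution monoid $(\Char(\calF[\bfH]),\star_\leftthreetimes)$ is $\epsilon_\delta$ itself. Since $\epsilon_\delta(G)=1$ exactly when $E^+(G)=\emptyset$, i.e.\ when $G=\tun^{|V(G)|}$, the key point is to filter hypergraphs by the "number of nontrivial edges'' (or, more robustly, to note that the one-vertex hypergraph $\tun$ is grouplike for $\delta^{(\leftthreetimes)}$: $\delta^{(\leftthreetimes)}(\tun)=\tun\otimes\tun$). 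For the first statement, I would decompose $\tdelta^{(\leftthreetimes)}$ accordingly and run the standard geometric-series argument.

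For part (1): if $\zeta(\tun)=0$, then $\zeta(\tun)\neq\epsilon_\delta(\tun)=1$, so $\zeta$ cannot be invertible (a convolution inverse $\eta$ would need $\zeta(\tun)\eta(\tun)=1$ after evaluating $\eta\star_\leftthreetimes\zeta$ on $\tun$, using $\delta^{(\leftthreetimes)}(\tun)=\tun\otimes\tun$). Conversely, suppose $\zeta(\tun)=c\neq 0$. I would first reduce to the case $c=1$ by scaling: replace $\zeta$ by the character $G\mapsto c^{-|V(G)|}\zeta(G)$, which is still a character (the grading by number of vertices is an algebra grading and $\delta^{(\leftthreetimes)}$ respects it suitably) and has value $1$ on $\tun$. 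Then write, for a hypergraph $G$ with $n$ vertices, $\zeta=\epsilon_\delta+\bar\zeta$ where $\bar\zeta$ vanishes on $\tun$ and on $1$; the convolution inverse is formally $\sum_{k\ge 0}(-1)^k\bar\zeta^{\star k}$, and I must argue this sum is finite on each $G$. This finiteness is exactly the point where I would use that in every term $\delta^{(\leftthreetimes),(k-1)}(G)=\sum G/\!\sim_1\otimes\cdots$ appearing, iterating the coproduct strictly increases the "depth'' only finitely often: more precisely, after at most $n-1$ applications, all tensor factors are products of $\tun$'s, and $\bar\zeta$ kills those; hence only $k\le n$ contribute. This gives existence of $\zeta^{\star_\leftthreetimes-1}$.

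For part (2): assume $\zeta(\tun)=\pm 1$ and $\zeta$ is $\Z$-valued. If $\zeta(\tun)=1$, the above formula $\zeta^{\star_\leftthreetimes-1}=\sum_{k\ge 0}(-1)^k\bar\zeta^{\star k}$ expresses $\zeta^{\star_\leftthreetimes-1}(G)$ as a finite $\Z$-linear combination of products of values $\zeta(\cdot)\in\Z$ (since each $\bar\zeta^{\star k}(G)$ is, by definition of the convolution, a sum over the iterated coproduct of products of integers $\zeta(G/\!\sim)$), hence it is an integer. If $\zeta(\tun)=-1$, I would pass to $\zeta'=\zeta\star_\leftthreetimes(\text{something})$ or more simply argue that $\eta:=$ "$\zeta$ composed with the sign $(-1)^{|V(\cdot)|}$'' has $\eta(\tun)=1$, is $\Z$-valued, and $\eta^{\star_\leftthreetimes-1}$ is $\Z$-valued by the previous case; since twisting by $(-1)^{|V(\cdot)|}$ is an involutive automorphism of the convolution monoid (because $\delta^{(\leftthreetimes)}$ preserves total vertex number additively across the two factors — indeed $|V(G/\!\sim)|+|V(G\mid_\leftthreetimes\sim)|$ is not $|V(G)|$ in general, so I should instead note $|V(G\mid_\leftthreetimes\sim)|=|V(G)|$ and $|V(G/\!\sim)|\le|V(G)|$, and use that on the \emph{second} leg the vertex count is constant), the inverse of $\zeta$ is the twist of $\eta^{\star_\leftthreetimes-1}$, hence $\Z$-valued.

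The main obstacle I anticipate is the bookkeeping in the last paragraph: making the sign-twist argument clean requires pinning down exactly how $|V(-)|$ behaves on the two legs of $\delta^{(\leftthreetimes)}$, and the naive "$(-1)^{|V|}$ is a convolution character'' is false as stated — one needs the correct twist (on the appropriate leg, or combining the $\tun$-value normalization with scaling). The safe route is to prove everything for $\zeta(\tun)=1$ directly via the finite geometric series, then handle $\zeta(\tun)=-1$ by the observation that $(\epsilon_\delta+2(\epsilon_\delta-\zeta))$-type manipulations or, cleanest, by noting $(-\zeta)$ is not a character but $(-1)^{|E^+(G)|}\zeta(G)$-type twists are — ultimately I would reduce to $c=1$ by the scaling $G\mapsto \zeta(\tun)^{|V(G)|}\zeta(G)$, which lands in $\Z$ precisely because $\zeta(\tun)=\pm1$, apply part (1)'s integrality, and scale back. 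I expect the finiteness-of-the-sum step (nilpotency of $\bar\zeta$ under $\star_\leftthreetimes$ on each homogeneous component) to be the technical heart, resting on the fact that iterating $\delta^{(\leftthreetimes)}$ eventually contracts $G$ down to $\tun^{|V(G)|}$ on which $\bar\zeta$ vanishes.
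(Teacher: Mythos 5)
Your treatment of the case $\zeta(\tun)=1$ is essentially sound: $\deg(G)=|V(G)|-\cc(G)$ is a grading of the bialgebra $(\calF[\bfH],m,\delta^{(\leftthreetimes)})$ whose degree-zero part is spanned by the group-likes $\tun^n$, so $\bar\zeta=\zeta-\epsilon_\delta$ vanishes in degree zero, is locally $\star_\leftthreetimes$-nilpotent, and the finite geometric series gives an integer-valued inverse. The genuine gap is the reduction of the general case to $\zeta(\tun)=1$. The scaling $\zeta\mapsto\tilde\zeta$ with $\tilde\zeta(G)=\zeta(\tun)^{\pm|V(G)|}\zeta(G)$ does produce a character of the algebra, but it is \emph{not} an endomorphism of the convolution monoid $(\Char(\calF[\bfH]),\star_\leftthreetimes)$: the coproduct $\delta^{(\leftthreetimes)}$ is not homogeneous for the vertex count, since $|V(G/\sim)|+|V(G\mid_\leftthreetimes\sim)|=\cl(\sim)+|V(G)|$. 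For instance $\delta^{(\leftthreetimes)}(T_2)=T_2\otimes\tun^2+\tun\otimes T_2$, and the scaled convolution acquires a factor $\zeta(\tun)^{\cl(\sim)}$ that varies from term to term; hence $(\tilde\zeta)^{\star_\leftthreetimes-1}$ is not a rescaling of $\zeta^{\star_\leftthreetimes-1}$ and ``scaling back'' is unjustified. Nor can any grading-type twist repair this: since $\delta^{(\leftthreetimes)}(\tun)=\tun\otimes\tun$, homogeneity forces $\tun$ to sit in degree $0$ for \emph{every} bialgebra grading, so a twist by $c^{\deg(\cdot)}$ (which \emph{is} a convolution automorphism) cannot change the value $\zeta(\tun)$. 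You flag this difficulty yourself, but your ``safe route'' ends with exactly the same vertex-count scaling, so the converse direction of (1) for $\zeta(\tun)\notin\{0,1\}$ and all of (2) in the case $\zeta(\tun)=-1$ remain unproved.

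The paper avoids any normalization: it introduces the grading $\deg=|V|-\cc$, quotes a lemma on graded bialgebras whose degree-zero group-likes are generated by $\tun$ to get (1), and proves (2) by induction on $\deg(G)$, writing $\delta^{(\leftthreetimes)}(G)=G\otimes\tun^{|V(G)|}+\tun^{\cc(G)}\otimes G+\sum G'\otimes G''$ with $\deg(G'),\deg(G'')<\deg(G)$ and solving $\zeta^{\star_\leftthreetimes-1}\star_\leftthreetimes\zeta=\epsilon_\delta$ recursively; the only divisions occurring are by powers of $\zeta(\tun)=\pm1$, which preserves integrality. Your argument is repaired the same way: drop the reduction to $\zeta(\tun)=1$ and run this triangular recursion directly, by induction on $\deg$, for an arbitrary $\zeta$ with $\zeta(\tun)\neq 0$ (resp.\ $\zeta(\tun)=\pm1$ and $\Z$-valued).
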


\begin{proof} 1.  For any hypergraph $G$, let us denote by $\cc(G)$ the number of its connected components. 
We put $\deg(G)=|V(G)|-\cc(G)$. For any hypergraph $G$, for any $\sim\in \eq_c[G]$,
\begin{align*}
\cc(G/\sim)&=\cc(G),&|V(G/\sim)|&=\cl(\sim),\\
\cc(G\mid \sim)&=\cl(\sim),&|V(G\mid \sim)|&=|V(G)|,
\end{align*}
where $\cl(\sim)$ is the number of equivalence classes of $\sim$. Therefore, 
$\deg$ induces a graduation of the bialgebra $(\calF[\bfH],m,\delta^{(\leftthreetimes)})$. 
The result is then a direct consequence of \cite[Lemma 3.9]{Foissy43}, where the family of group-like elements is reduced to $\tun$. \\

2. We proceed by induction on $\deg(G)$. If $\deg(G)=0$, then $\delta^{(\leftthreetimes)}(G)=G\otimes G$
and we deduce that 
\[\zeta^{\star_\leftthreetimes-1}(G)=\dfrac{1}{\zeta(G)}=\pm 1.\]
Let us assume that the result is satisfied for any graph $H$ of degree $<\deg(G)$. 
Then
\[\delta^{(\leftthreetimes)}(G)=G\otimes \tun^{|V(G)|}+\tun^{|\cc(G)|}\otimes G+ \sum G'\otimes G'',\]
with $G'$ and $G''$ are hypergraphs with $\deg(G)',\deg(G'')<n$. 
As $\deg(G)>0$, $G$ has at least one edge, and $\epsilon_\delta(G)=0$. Therefore, we put
\begin{align*}
\zeta^{\star_\leftthreetimes-1}(G)&=-\frac{1}{\zeta(\tun)^{|V(G)|}}\left(\frac{1}{\zeta(\tun)^{\cc(G)}}\zeta(G)+\sum 
\zeta^{\star_\leftthreetimes-1}(G')\zeta(G'')\right)\in \Z,
\end{align*}
as $\zeta(\tun)=\pm 1$ and $\zeta(G'')$, $\zeta^{\star_\leftthreetimes-1}(G')\in \Z$. 
\end{proof}

This can be applied to $\lambda_0$:

\begin{defi}
We denote by $\lambda_\leftthreetimes$ the inverse of $\lambda_0$ for the convolution product$\star_\leftthreetimes$
 associated to $\delta^{(\leftthreetimes)}$. It exists, and for any hypergraph $G$, $\lambda_\leftthreetimes(G)\in \Z$. 
\end{defi}

\begin{prop} \label{prop2.5}
Let $\leftthreetimes \in \{\subset,\cap\}$. For any hypergraph $G$,  $P_\leftthreetimes(G)\in \Z[X]$,
and is a unitary polynomial of degree $|V(G)|$.
Moreover,  the opposite of the  coefficient of $X^{|V(G)|-1}$ in $P_\leftthreetimes(G)$ is:
\begin{itemize}
\item the number of edges of $G$ of cardinality 2 if $\leftthreetimes=\subset$. 
\item $\displaystyle \sum_{e\in E^+(G)} \binom{|e|}{2}$ if $\leftthreetimes=\cap$. 
\end{itemize}
Moreover, for any hypergraph $G$,
\begin{align}
\label{eq1}
P_\leftthreetimes(G)=\sum_{\sim\in \eq_\leftthreetimes[G]} \lambda_\leftthreetimes(G\mid\sim) X^{\cl(\sim)}.
\end{align}
\end{prop}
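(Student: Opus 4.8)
The plan is to use the action $\leftsquigarrow_\leftthreetimes$ together with the observation that $P_0 = P_\leftthreetimes \leftsquigarrow_\leftthreetimes \lambda_0$ to invert the relation, and then to read off the degree and subleading coefficient from the combinatorial description of $P_\leftthreetimes$ in Proposition \ref{prop2.1}.

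First I would establish the integrality and the shape of $P_\leftthreetimes(G)$. Since $\lambda_0 = \epsilon_\delta \circ P_0$ is a character with $\lambda_0(\tun) = 1$, by Proposition \ref{prop2.4} it is invertible for $\star_\leftthreetimes$ with integer-valued inverse $\lambda_\leftthreetimes$; this is exactly the content of the preceding Definition. Now from $P_0 = P_\leftthreetimes \leftsquigarrow_\leftthreetimes \lambda_0 = (P_\leftthreetimes \otimes \lambda_0)\circ \delta^{(\leftthreetimes)}$, applying the inverse character gives $P_\leftthreetimes = P_0 \leftsquigarrow_\leftthreetimes \lambda_\leftthreetimes = (P_0 \otimes \lambda_\leftthreetimes)\circ \delta^{(\leftthreetimes)}$, which, since $P_0(H) = X^{|V(H)|}$, unfolds to
\[
P_\leftthreetimes(G) = \sum_{\sim\in \eq_\leftthreetimes[G]} X^{|V(G/\sim)|}\,\lambda_\leftthreetimes(G\mid_\leftthreetimes\sim) = \sum_{\sim\in \eq_\leftthreetimes[G]} \lambda_\leftthreetimes(G\mid\sim)\, X^{\cl(\sim)},
\]
using $|V(G/\sim)| = \cl(\sim)$. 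This is formula \eqref{eq1}, and it shows at once that $P_\leftthreetimes(G) \in \Z[X]$. I should double-check that the action $\leftsquigarrow_\leftthreetimes$ is really an action of the monoid $\Char(\calF[\bfH])$ (so that $P_0 \leftsquigarrow \lambda_\leftthreetimes \leftsquigarrow \lambda_0 = P_0 \leftsquigarrow (\lambda_\leftthreetimes \star_\leftthreetimes \lambda_0) = P_0 \leftsquigarrow \epsilon_\delta = P_0$ forces $P_\leftthreetimes = P_0 \leftsquigarrow \lambda_\leftthreetimes$); this is guaranteed by the cited results of \cite{Foissy40}.

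Next I would pin down the leading and subleading coefficients, and here I would rather use the colouring interpretation of Proposition \ref{prop2.1} than \eqref{eq1}, because counting is cleanest there. For a hypergraph $G$ on $n$ vertices, $P_\leftthreetimes(G)(N)$ counts maps $f:V(G)\to[N]$ avoiding a forbidden pattern on each nontrivial edge; since there are $N^n$ maps total and the forbidden set is nonempty only when $E^+(G)\neq\emptyset$, an inclusion–exclusion over the edges on which $f$ fails shows $P_\leftthreetimes(G)(N) = N^n - (\text{number of bad edges of size governing the linear deficit})\,N^{n-1} + O(N^{n-2})$. Concretely: for $\leftthreetimes = \cap$, $f$ is "bad on $e$" if $f$ is constant on some pair of vertices of $e$; the number of maps constant on a fixed pair and otherwise free is $N^{n-1}$, and the count of such (edge, pair) data to first order is $\sum_{e\in E^+(G)}\binom{|e|}{2}$, giving subleading coefficient $-\sum_{e\in E^+(G)}\binom{|e|}{2}$. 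For $\leftthreetimes=\subset$, $f$ is "bad on $e$" if $f$ is constant on all of $e$, which forces $|e|-1$ independent coincidences; this contributes at order $N^{n-1}$ only when $|e| = 2$, so the subleading coefficient is minus the number of edges of cardinality $2$. In both cases the coefficient of $X^n$ is $1$, so $P_\leftthreetimes(G)$ is monic of degree $n = |V(G)|$; degree exactness also follows from \eqref{eq1} since the term $\sim\,=\,$ equality of $V(G)$ contributes $\lambda_\leftthreetimes(\tun^n)X^n = X^n$ and no $\sim$ contributes a higher power.

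The main obstacle is organizing the inclusion–exclusion so that the error term is genuinely $O(N^{n-2})$ and no subtle lower-order contributions at order $N^{n-1}$ are missed — in particular verifying that for $\leftthreetimes=\subset$ an edge of size $\geq 3$ contributes nothing at order $N^{n-1}$ (it forces at least two coincidences, hence $N^{n-2}$), and that for $\leftthreetimes=\cap$ distinct pairs from possibly overlapping edges are correctly counted with multiplicity $\sum_e \binom{|e|}{2}$ rather than deduplicated. A clean way to handle this is to expand $P_\leftthreetimes(G)(N) = \sum_{f:V(G)\to[N]} \prod_{e\in E^+(G)}(1 - \mathbf{1}[f \text{ bad on } e])$, expand the product, and sort terms by the dimension of the affine subspace of $[N]^{V(G)}$ they cut out; only the empty selection (dimension $n$) and the single-edge selections achieving a codimension-$1$ locus (dimension $n-1$) survive modulo $N^{n-2}$. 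Finally, having proved the two polynomials in $N$ agree for all $N \in \N_{>0}$, they agree as polynomials, which transfers the leading/subleading data to the formal polynomial $P_\leftthreetimes(G) \in \Z[X]$.
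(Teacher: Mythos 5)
The first half of your argument coincides with the paper's proof: you invert $P_0=P_\leftthreetimes\leftsquigarrow_\leftthreetimes\lambda_0$ into $P_\leftthreetimes=P_0\leftsquigarrow_\leftthreetimes\lambda_\leftthreetimes$ (legitimate, by Proposition \ref{prop2.4} and the cited results of \cite{Foissy40}), unfold it to formula \eqref{eq1}, and get integrality from $\lambda_\leftthreetimes(H)\in\Z$ and monicity of degree $|V(G)|$ from the equality relation, exactly as in the paper. Where you diverge is the coefficient of $X^{|V(G)|-1}$: the paper reads it off \eqref{eq1} directly, noting that the only contributing equivalences have exactly one class $\{x,y\}$ with $G_{\mid_\leftthreetimes\{x,y\}}=\tdeux$ and all other classes singletons, and that $\lambda_\leftthreetimes(\tdeux)=-1$; you instead run an inclusion--exclusion on the colouring count of Proposition \ref{prop2.1}. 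For $\leftthreetimes=\subset$ your version is fine: an edge of size $\geq 3$ forces a locus of codimension $\geq 2$, and distinct $2$-edges are distinct pairs, so only the $2$-edges contribute at order $N^{n-1}$.

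For $\leftthreetimes=\cap$, however, there is a genuine gap. Your organizing claim that only the empty selection and single-edge selections survive modulo $N^{n-2}$ fails as soon as two distinct edges $e\neq e'$ share two vertices $x,y$: the event ``bad on $e$ and bad on $e'$'' contains the codimension-one locus $\{f(x)=f(y)\}$, so terms with $|F|\geq 2$ do contribute at order $N^{n-1}$, with alternating signs. You flagged exactly this point (multiplicity versus deduplication) as the main obstacle, but your proposed resolution goes the wrong way: carrying the inclusion--exclusion out correctly (or simply using $P_\cap=P_{chr}\circ\calF[\Gamma]$ from Proposition \ref{prop2.11}) gives for the subleading coefficient minus the number of \emph{distinct} pairs $\{x,y\}$ contained in at least one edge, i.e.\ minus the number of edges of $\Gamma(G)$, each pair counted once. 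This equals $\sum_{e\in E^+(G)}\binom{|e|}{2}$ only when distinct edges of $G$ meet in at most one vertex; for instance $E^+(G)=\{\{a,b,c\},\{a,b,d\}\}$ gives $P_\cap(G)=X(X-1)(X-2)^2=X^4-5X^3+\cdots$, not a coefficient $-6$. Note that the deduplicated count is also what the paper's own route produces (the pairs $\{x,y\}$ with $G_{\mid_\cap\{x,y\}}=\tdeux$), so the subtlety sits in the final identification with $\sum_e\binom{|e|}{2}$; but as written, your inclusion--exclusion step for $\leftthreetimes=\cap$ asserts a cancellation pattern that is false in general and cannot yield the displayed formula without an additional hypothesis on pairwise edge intersections.
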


\begin{proof} 
By Proposition \ref{prop2.4},
\[P_\leftthreetimes=P_0\leftsquigarrow_\leftthreetimes \lambda_\leftthreetimes.\]
This gives (\ref{eq1}).  For any hypergraph $H$,
$\lambda_\leftthreetimes(H)\in \Z$, which leads to the conclusion that the coefficients of $P_\leftthreetimes(G)$ 
are integers. Moreover, the degree of $\phi_\leftthreetimes(G)$ is smaller that $|V(G)|$. The unique $\sim$ contributing
with a term of degree $|V(G)|$ is the equality of $V(G)$, for which $G(\mid \sim)=\tun^{|V(G)|}$, so
$\lambda_\leftthreetimes(G\mid_\leftthreetimes\sim)=1$: $P$ is unitary of degree $|V(G)|$. 

The equivalences $\sim \in \eq_\leftthreetimes[G]$ contributing with a term $X^{|V(G)|-1}$ have exactly one 
class $\{x,y\}$ of cardinality $2$, the other ones being singleton. The connectedness condition implies that
$G_{\mid_\leftthreetimes \{x,y\}}$ should be the graph $\tdeux$. For such an equivalence $\sim$,
\[\lambda_\leftthreetimes(G\mid_\leftthreetimes \sim)=
\lambda_\leftthreetimes(\tdeux \tun^{|V(G)|-2})=\lambda_\leftthreetimes(\tdeux)=-1.\]
Consequently, the coefficient of $X^{|V(G)|-1}$ is the opposite of the number of such equivalences $\sim$,
that is to say the number of pairs $\{x,y\}$ of $G$ such that $G_{\mid_\leftthreetimes \{x,y\}}=\tdeux$.
This leads directly to the conclusion. 
\end{proof}

See Proposition \ref{prop2.19} for more results on the coefficients of $P_\leftthreetimes(G)$.

\begin{remark}
In general, $P_{\subset,\cap}(H)\notin \Z[X]$, For example,
\[P_{\subset,\cap}(T_3)=X^3-\frac{3}{2}X^2+\frac{1}{2}X.\]
\end{remark}

\begin{prop} \label{prop2.6}
There is no coproduct $\delta^{(\subset,\cap)}$ on $\calF[\bfH]$ such that:
\begin{enumerate}
\item $(\calF[\bfH],m,\Delta^{(\subset,\cap)},\delta^{(\subset,\cap)})$ is a double bialgebra.
\item The counit of $\delta^{(\subset,\cap)}$ is $\epsilon_\delta$.
\item The character $\lambda_0$ is invertible for the convolution $\star$ associated to $\delta^{(\subset,\cap)}$ and
for any hypergraph $G$, $\lambda_0^{\star-1}(G)\in \Z$.
\item For any hypergraph $G$, we can write 
\[\delta^{(\subset,\cap)}(G)=\sum_{G_1,G_2\: \mbox{\scriptsize hypergraphs}} a_{G_1,G_2}(G) G_1\otimes G_2,\]
with $a_{G_1,G_2}(G)\in \Z$ for any $G_1,G_2$.
\end{enumerate}
\end{prop}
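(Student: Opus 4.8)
The plan is to argue by contradiction: suppose such a $\delta^{(\subset,\cap)}$ exists with all four properties. I would then extract numerical consequences by evaluating the double-bialgebra compatibility and the inverse-character formula on the small hypergraphs $T_n$, and derive a contradiction with the non-integrality of $P_{\subset,\cap}$ already observed in the Remark preceding the statement (namely $P_{\subset,\cap}(T_3)=X^3-\tfrac32 X^2+\tfrac12 X$).

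First I would recall the general machinery from \cite{Foissy40}: in any double bialgebra $(\calF[\bfH],m,\Delta^{(\subset,\cap)},\delta^{(\subset,\cap)})$ whose second counit is $\epsilon_\delta$, there is a canonical polynomial invariant, and by \cite[Corollary 3.11]{Foissy40} (in the form used for Proposition \ref{prop2.5}) the morphism $P_{\subset,\cap}$ coincides with $P_0 \leftsquigarrow \lambda_0^{\star-1}$, where the action and the convolution $\star$ are those induced by $\delta^{(\subset,\cap)}$. Concretely this means
\[
P_{\subset,\cap}(G)=\sum_{G_1,G_2} a_{G_1,G_2}(G)\,\lambda_0^{\star-1}(G_2)\,X^{|V(G_1)|},
\]
using property (4) to expand $\delta^{(\subset,\cap)}(G)$ and $P_0(G_1)=X^{|V(G_1)|}$. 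By properties (3) and (4), every coefficient $a_{G_1,G_2}(G)$ and every value $\lambda_0^{\star-1}(G_2)$ is an integer, so the right-hand side lies in $\Z[X]$; hence $P_{\subset,\cap}(G)\in\Z[X]$ for every hypergraph $G$. Taking $G=T_3$ contradicts the explicit computation $P_{\subset,\cap}(T_3)=X^3-\tfrac32X^2+\tfrac12X\notin\Z[X]$, and we are done.

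The main obstacle is justifying the identity $P_{\subset,\cap}=P_0\leftsquigarrow\lambda_0^{\star-1}$ cleanly, since $(\calF[\bfH],m,\Delta^{(\subset,\cap)})$ is only a Hopf algebra, not obviously fitting the exact hypotheses of \cite{Foissy40}. Here the key point is that $P_{\subset,\cap}$, as constructed right before Proposition \ref{prop2.5} from the character $\epsilon_\delta$ via Hilbert polynomials, is characterized as the unique Hopf algebra morphism to $(\K[X],m,\Delta)$ sending $G$ to a polynomial whose value at $1$ is $\epsilon_\delta(G)$; and similarly $P_0$ is the unique such morphism with value $\lambda_0(G)=1$ at $1$. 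Once $\delta^{(\subset,\cap)}$ is assumed to make everything a double bialgebra with second counit $\epsilon_\delta$, the action $\leftsquigarrow$ of $\Char(\calF[\bfH],m,\delta^{(\subset,\cap)})$ on polynomial invariants is defined, it is free and transitive in the sense of \cite[Theorem 3.9 and Corollary 3.11]{Foissy40}, and $P_0 = P_{\subset,\cap}\leftsquigarrow\lambda_0$ because evaluating at $X=1$ gives $\lambda_0$; inverting $\lambda_0$ (possible by hypothesis (3)) yields $P_{\subset,\cap}=P_0\leftsquigarrow\lambda_0^{\star-1}$. The verification that the relevant results of \cite{Foissy40} apply verbatim — i.e. that the bialgebra $(\calF[\bfH],m,\delta^{(\subset,\cap)})$ is graded connected in a way compatible with the grading making $P_0$ homogeneous — is the part that needs care, but it follows from the degree function $\deg(G)=|V(G)|-\cc(G)$ exactly as in the proof of Proposition \ref{prop2.4}, provided $\delta^{(\subset,\cap)}$ respects it, which one reads off from property (4) together with the compatibility with $\Delta^{(\subset,\cap)}$ and the value of the counit $\epsilon_\delta$. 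Alternatively, and perhaps more robustly, one can bypass the action formalism entirely: directly from properties (1)-(4) one shows by induction on $\deg(G)$ that the coefficients of $P_{\subset,\cap}(G)$ are $\Z$-linear combinations of the integers $\lambda_0^{\star-1}(G_2)$, again reaching the contradiction at $T_3$.
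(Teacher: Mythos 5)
Your proposal is correct and follows essentially the same route as the paper: identify $P_{\subset,\cap}$ as the polynomial invariant whose evaluation at $1$ is $\epsilon_\delta$, write $P_0=P_{\subset,\cap}\leftsquigarrow\lambda_0$ and invert $\lambda_0$ to get $P_{\subset,\cap}=P_0\leftsquigarrow\lambda_0^{\star-1}$, then use hypotheses (3) and (4) to deduce $P_{\subset,\cap}(G)\in\Z[X]$ and contradict this with $P_{\subset,\cap}(T_3)=X^3-\tfrac32X^2+\tfrac12X$. Your extra care about the applicability of the results of \cite{Foissy40} (gradedness via $\deg(G)=|V(G)|-\cc(G)$, or the inductive fallback) is a reasonable refinement but does not change the argument, which is the one the paper gives.
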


\begin{proof} Let us assume that such a $\delta^{(\subset,\cap)}$ exists. 
The unique double bialgebra morphism $\phi$ from $(\calF[\bfH],m,\Delta^{(\subset,\cap)},\delta^{(\subset,\cap)})$
to $(\K[X],m,\Delta,\delta)$ is the unique bialgebra morphism $\phi$ from $(\calF[\bfH],m,\Delta^{(\subset,\cap)})$
to $(\K[X],m,\Delta)$ such that $\epsilon_\delta\circ \phi=\epsilon_\delta$: it is $P_{(\subset,\cap)}$. 
The morphism $P_0$ is also a bialgebra morphism from $(\calF[\bfH],m,\Delta^{(\subset,\cap)})$
to $(\K[X],m,\Delta)$. Denoting by $\leftsquigarrow$ the action induced by $\delta^{(\subset,\cap)}$,
\[P_0=P_{(\subset,\cap)}\leftsquigarrow \lambda_0.\]
As $\lambda_0$ is invertible, $P_{(\subset,\cap)}=P_0\leftsquigarrow \lambda_0^{\star-1}$. Therefore, for any hypergraph $G$,
\[P_{(\subset,\cap)}(G)=\sum_{G_1,G_2\: \mbox{\scriptsize hypergraphs}} a_{G_1,G_2}(G) 
\lambda_0^{\star-1}(G_2)X^{|V(G_1)|}\in \Z[X],\]
which is not the case for $G=T_3$.
\end{proof}

\begin{cor}\label{cor2.7}
There is no coproduct $\delta$ making $(\calF[\bfH],m,\Delta^{(\subset,\cap)},\delta)$ a double bialgebra, of the form
\[\delta(G)=\sum_{\sim \in \eq'[G]} G/\sim\otimes G_{\mid_\leftthreetimes \sim},\]
where $\eq'[G]$ is a set of equivalences on $V(G)$ and $\leftthreetimes\in \{\cap,\subset\}$. 
\end{cor}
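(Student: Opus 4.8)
The plan is to deduce this from Proposition~\ref{prop2.6}: I will show that any coproduct $\delta$ of the stated shape automatically satisfies the four conditions of that proposition, which then rules out its existence. Condition~1 is the hypothesis, and condition~4 is immediate, since writing $\delta(G)=\sum_{\sim\in\eq'[G]}G/\sim\otimes G\mid_\leftthreetimes\sim$ on the basis of isoclasses of hypergraphs gives nonnegative integer coefficients $a_{G_1,G_2}(G)$, namely the number of $\sim\in\eq'[G]$ with $G/\sim=G_1$ and $G\mid_\leftthreetimes\sim=G_2$. So the substance is conditions~2 and~3.

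\emph{The counit is $\epsilon_\delta$ (condition~2).} As $\delta$ is multiplicative, its counit $\epsilon'$ is a character of $\calF[\bfH]$. The counit axioms applied to $\tun$ force $\eq'[\tun]\neq\emptyset$, hence it contains the unique relation on a one-point set, so $\delta(\tun)=\tun\otimes\tun$ and $\epsilon'(\tun)=1$. Comparing, in $(\id\otimes\epsilon')\circ\delta(G)=G$, the coefficient of the isoclass of $G$ shows that the equality relation always belongs to $\eq'[G]$ — it is the only $\sim$ for which $G/\sim$ has as many vertices as $G$. Now let $G$ satisfy $E^+(G)\neq\emptyset$ and argue by induction on $|V(G)|$: comparing the coefficient of $\tun^{|V(G)|}$ in $(\epsilon'\otimes\id)\circ\delta(G)=G$, the right-hand side contributes $0$, while on the left only the $\sim\in\eq'[G]$ with $G\mid_\leftthreetimes\sim=\tun^{|V(G)|}$ contribute; for each such $\sim$ no nontrivial edge of $G$ lies inside a class, so every nontrivial edge is split and $E^+(G/\sim)\neq\emptyset$, whence by induction $\epsilon'(G/\sim)=0$ unless $\sim$ is the equality relation. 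This leaves $\epsilon'(G)=0$, so $\epsilon'=\epsilon_\delta$.

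\emph{$\lambda_0$ is invertible with integral inverse (condition~3).} Using multiplicativity and the counit axiom one checks that
\[\delta(G)=G\otimes\tun^{|V(G)|}+\tun^{\cc(G)}\otimes G+\sum_\sim G/\sim\otimes G\mid_\leftthreetimes\sim,\]
the remaining sum running over those $\sim\in\eq'[G]$ distinct from the equality relation and from the connected-components relation $\sim_G$; for each such $\sim$ one has $|V(G/\sim)|<|V(G)|$, and since (by the counit axiom) $\sim_G$ is the only member of $\eq'[G]$ all of whose classes are unions of connected components, some edge of $G$ is split by $\sim$, so $w(G\mid_\leftthreetimes\sim)<w(G)$ where $w(H):=\sum_{e\in E^+(H)}(|e|-1)$. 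As the $\tun^n$ are group-like for $\delta$, the value of $\lambda_0^{\star_\leftthreetimes-1}$ on them is $1$, and the identity $\lambda_0\star_\leftthreetimes\lambda_0^{\star_\leftthreetimes-1}=\epsilon_\delta$ then determines $\lambda_0^{\star_\leftthreetimes-1}(G)$ from its values at $\tun^{|V(G)|}$ and at the $G\mid_\leftthreetimes\sim$ above; this recursion is well-founded for the lexicographic order on $(|V(G)|,w(G))$ and yields integer values since $\lambda_0(\tun)=1$. Thus all four conditions of Proposition~\ref{prop2.6} hold, which is impossible, so no such $\delta$ exists.

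The main obstacle is condition~3: one needs a complexity measure that strictly decreases under \emph{both} operations appearing in the nontrivial part of $\delta$, uniformly for $\leftthreetimes=\subset$ and $\leftthreetimes=\cap$. The obvious candidate $|E^+|$ fails when $\leftthreetimes=\cap$, because $G\mid_\cap\sim$ may break one edge into several; the weighted count $w$ repairs this, since splitting an edge $e$ among several classes replaces its contribution $|e|-1$ by at most $|e|-2$, while the contraction part strictly lowers the number of vertices.
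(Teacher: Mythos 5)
Your argument is correct and follows the same overall strategy as the paper: reduce Corollary \ref{cor2.7} to Proposition \ref{prop2.6} by checking that any coproduct of the prescribed shape satisfies its four hypotheses (conditions 1 and 4 being immediate). Where you differ is in the verification of condition 3. The paper first uses compatibility with the product to show that every class of every $\sim\in\eq'[G]$ lies inside a single connected component of $G$, and then adapts the proof of Proposition \ref{prop2.4}, i.e.\ runs the induction along $\deg(G)=|V(G)|-\cc(G)$, which is controlled precisely because classes sit inside components. You bypass that step and instead drive the recursion for $\lambda_0^{\star-1}$ by the weight $w(H)=\sum_{e\in E^+(H)}(|e|-1)$ (lexicographically with $|V|$), using the counit identity to see that exactly one $\sim\in\eq'[G]$ splits no nontrivial edge while every other $\sim$ strictly lowers $w$ of the right-hand factor, uniformly for $\leftthreetimes=\subset$ and $\leftthreetimes=\cap$; this is a perfectly sound alternative to the degree filtration, and your closing remark about why $|E^+|$ fails but $w$ works is exactly the right point. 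Two small caveats: the identification of the unique ``no edge split'' member of $\eq'[G]$ with $\sim_G$ is not a consequence of the counit axiom alone, as your parenthesis suggests --- it needs multiplicativity (decompose $G$ into connected components, which is the paper's first bullet); this is harmless both because you do invoke multiplicativity and because your recursion only uses the right-hand factors $G\mid_\leftthreetimes\sim$, for which uniqueness of the no-split equivalence already suffices. Also, your recursion only produces a right convolution inverse of $\lambda_0$; a sentence noting that the symmetric recursion on $|V(G)|$ (using the other counit identity, where all contracted factors $G/\sim$ with $\sim$ different from equality have fewer vertices) yields the left inverse would make the invertibility claimed in condition 3 fully two-sided.
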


\begin{proof} Indeed, for such a coproduct:
\begin{itemize}
\item The compatibility with the product implies that if $\sim\in \eq'[G]$, then any class of $\sim$ is included into a single
connected component of $G$.
\item The existence of the counit implies then that the equality of $V(G)$ belongs to $\eq'[G]$, as well as the one which 
classes are the connected components of $G$. Consequently, the counit is $\epsilon_\delta$.
\item Adapting the proof of Proposition \ref{prop2.4}, we obtain the condition on $\lambda_0^{\star-1}$. \qedhere
\end{itemize}\end{proof}

\subsection{Acyclic orientations}

If $G$ is a graph, Stanley's theorem \cite{Stanley1973} gives that
\[P_{chr}(G)(-1)=(-1)^{|V(G)|}\sharp\{\mbox{acyclic orientations of $G$}\}.\]
We here extend this result to $P_\subset(G)$ and $P_\cap(G)$ when $G$ are hypergraphs. \\

\begin{notation}
Let $X$ be a set. Recall that a quasi-order on $X$ is a transitive and reflexive relation $\leq$ on $X$. 
It is called total if for any $x,y\in X$, $x\leq y$ or $y\leq x$ (note that $x\leq y$, $y\leq x$ and $x\neq y$ may happen). 
If $\leq$ is a quasi-order on $X$, we define an equivalence on $X$ by
\begin{align*}
&\forall x,y\in X,&x\sim y&\Longleftrightarrow x\leq y\mbox{ and }y\leq x.
\end{align*}
The number of classes of $\sim$ is denoted by $\cl(\leq)$. The set $X/\sim$ is given an order by
\begin{align*}
&\forall \overline{x},\overline{y}\in X/\sim,&\overline{x}\preceq \overline{y}&\Longleftrightarrow x\leq y.
\end{align*}
\end{notation}

\begin{defi}\label{defi2.8}
Let $G$ be a hypergraph. 
\begin{enumerate}
\item An acyclic orientation of $G$ is a quasi-order $\leq$ on $V(G)$ such that:
\begin{itemize}
\item For any $e\in E^+(G)$, $\leq_{\mid e}$ is a total nontrivial quasi-order on $e$.
\item For any $x,y\in V(G)$ such that $x<y$, there exists a path $(x_0,\ldots,x_k)$ in $G$
with $x=x_0$, $y=x_k$ and $x_0<\ldots<x_k$. 
\item For any $x,y\in V(G)$, if $x\leq y$ and $y\leq x$, then $x,y$ belong to a same edge of $G$. 
\end{itemize} 
\item Let $\leq$ be an acyclic orientation of $G$. 
\begin{itemize}
\item We shall say that $\leq$ is total if for any edge $e$, $\leq_{\mid e}$ is an order (hence, a total order).
\item We shall say that $\leq$ is 1-max if for any edge $e$, the maximal class of $\leq_{\mid e}$ is a singleton.
\end{itemize}
\end{enumerate}
\end{defi}

\begin{remark}
Let $G$ be a graph, considered as a hypergraph and let $\leq$ be an acyclic orientation of $G$. 
By the first point, for any edge $\{x,y\}$ of $G$, $x<y$ or $y<x$: we obtain an orientation of $G$
by orienting any edge $e$ according to $<$. As the vertices in an oriented  path of $G$ are strictly increasing
according to $<$, there is no cycle in this orientation: we recover an acyclic orientation of $G$ in its usual sense.
Conversely, if $G'$ is an acyclic orientation of $G$ in the usual sense, we define a partial order on $V(G)$
by $x\leq y$ if there exists an oriented path from $x$ to $y$ in $G'$. It is not difficult to see that this is an acyclic
orientation of the hypergraph $G$. 
Hence, acyclic orientations of graphs $G$ (seen as hypergraphs) are acyclic orientations in the usual sense.
\end{remark}

\begin{lemma}
For any hypergraph $G$,
for any $\leftthreetimes \in \{\subset,\cap\}$,
\[P_\leftthreetimes(G)(-1)=\sum_{n=1}^{|V(G)|} \sum_{\substack{V(G)=I_1\sqcup \ldots \sqcup I_n\\I_1,\ldots,I_n\neq \emptyset,\\
\forall p\in [n],\: E^+(G_{\mid_\leftthreetimes I_p})=\emptyset}} (-1)^n,\]
and
\[P_{(\subset,\cap)}(G)(-1)=\sum_{n=1}^{|V(G)|} \sum_{\substack{V(G)=I_1\sqcup \ldots \sqcup I_n\\I_1,\ldots,I_n\neq \emptyset,\\
\forall p\in [n],\: E^+(G_{\mid^{(p)} I_p})=\emptyset}} (-1)^n.\]
\end{lemma}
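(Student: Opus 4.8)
The plan is to evaluate the explicit polynomial formula for $P_\leftthreetimes(G)$ obtained in Proposition~\ref{prop2.1} and its proof, then substitute $X=-1$. Recall from the proof of Proposition~\ref{prop2.1} that
\[
P_\leftthreetimes(G)=\sum_{k=1}^\infty \sum_{\substack{V(G)=I_1\sqcup \ldots \sqcup I_k\\I_1,\ldots,I_k\neq \emptyset\\\forall p\in[k],\: E^+(G_{\mid_\leftthreetimes I_p})=\emptyset}} H_k(X),
\]
with $H_k(X)=X(X-1)\cdots(X-k+1)/k!$. So the first and essentially only step is the elementary computation
\[
H_k(-1)=\frac{(-1)(-2)\cdots(-k)}{k!}=(-1)^k.
\]
Plugging this into the formula for $P_\leftthreetimes(G)$ gives directly
\[
P_\leftthreetimes(G)(-1)=\sum_{k=1}^\infty \sum_{\substack{V(G)=I_1\sqcup \ldots \sqcup I_k\\I_1,\ldots,I_k\neq \emptyset\\\forall p\in[k],\: E^+(G_{\mid_\leftthreetimes I_p})=\emptyset}} (-1)^k,
\]
and one observes the inner sum is empty once $k>|V(G)|$ (there is no way to partition $V(G)$ into more than $|V(G)|$ nonempty blocks), so the sum over $k$ truncates at $|V(G)|$, matching the claimed statement (after renaming $k$ to $n$).

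For the second identity, concerning $P_{(\subset,\cap)}$, the plan is identical but starting from the formula established in the proof of the proposition defining $P_{\subset,\cap}$ (the one just before Example~\ref{ex2.1}), namely
\[
P_{\subset,\cap}(G)=\sum_{k=1}^\infty \sum_{\substack{V(G)=I_1\sqcup \ldots \sqcup I_k\\I_1,\ldots,I_k\neq \emptyset\\\forall i\in[k],\: E^+(G_{\mid^{(i)} I_p})=\emptyset}} H_k(X).
\]
Again substitute $H_k(-1)=(-1)^k$ and truncate at $k=|V(G)|$ for the same cardinality reason, yielding the second displayed formula of the lemma. (One should be slightly careful here: the $i$-th factor uses $G_{\mid^{(i)}I_i}$, where the operation $\mid^{(p)}$ depends on the whole ordered sequence $I_1,\ldots,I_k$, as in the remark following Proposition~\ref{prop1.2}; but this is exactly how the statement is phrased, so no reindexing subtlety arises.)

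There is essentially no obstacle: the only nontrivial inputs are already done — the closed formulas for $P_\leftthreetimes$ and $P_{\subset,\cap}$ in terms of Hilbert polynomials — and the remaining content is the one-line evaluation $H_k(-1)=(-1)^k$ together with the trivial observation that a finite set of size $|V(G)|$ has no set-partition into more than $|V(G)|$ parts. So the proof reduces to: recall the two formulas, substitute $X=-1$, compute $H_k(-1)=(-1)^k$, and truncate the range of summation. If anything deserves a word, it is the truncation, but that is immediate.
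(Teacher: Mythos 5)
Your proof is correct, but it takes a different (and more direct) route than the paper. The paper deduces the lemma from the Hopf-algebraic identity $P_\leftthreetimes(G)(-X)=P_\leftthreetimes\circ S_\leftthreetimes(G)$ (compatibility of bialgebra morphisms with antipodes), then expands $S_\leftthreetimes(G)$ by Takeuchi's formula and uses $P_\leftthreetimes(H)(1)=\epsilon_\delta(H)$ to reduce the evaluation at $-1$ to the signed sum over ordered partitions; the same argument is invoked for $P_{\subset,\cap}$. You instead take the Hilbert-polynomial expansions already established in the proofs of Proposition \ref{prop2.1} and of the proposition defining $P_{\subset,\cap}$ (sums over surjective maps, i.e.\ ordered partitions into nonempty blocks satisfying the $E^+=\emptyset$ condition), and simply substitute $X=-1$, using $H_k(-1)=(-1)^k$ and the trivial truncation at $k\leq |V(G)|$. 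Your observation about the operation $\mid^{(p)}$ depending on the whole ordered sequence is the right point to flag, and it causes no trouble since the statement is phrased over ordered partitions. The two arguments carry the same combinatorial content (the signs $(-1)^k$ in Takeuchi's formula are exactly the values $H_k(-1)$), but yours avoids the antipode machinery altogether and works uniformly for $P_{\subset,\cap}$ without separately appealing to the Hopf algebra structure of $(\calF[\bfH],m,\Delta^{(\subset,\cap)})$; what the paper's route buys is an explicit link between the value at $-1$ and the antipode, which is the viewpoint exploited afterwards for the antipode formulas.
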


\begin{proof}
Recall that $P_\leftthreetimes:(\calF[\bfH],m,\Delta^{(\leftthreetimes)})\longrightarrow (\K[X],m,\Delta)$
is a bialgebra morphism, so is a Hopf algebra morphism. Let $G$ be a hypergraph.
\begin{align*}
P_\leftthreetimes(G)(-X)&=S\circ P_\leftthreetimes(G)=P_\leftthreetimes \circ S_\leftthreetimes(G),
\end{align*}
where $S_\leftthreetimes(G)$ is the antipode of the Hopf algebra $(\calF[\bfH],m,\Delta^{(\leftthreetimes)})$
and $S$ the antipode of $(\K[X],m,\Delta)$. Moreover, by Takeuchi's formula \cite{Takeuchi1971},
\begin{align*}
S_\leftthreetimes(G)&=\sum_{n=1}^{|V(G)|} \sum_{\substack{V(G)=I_1\sqcup \ldots \sqcup I_n\\I_1,\ldots,I_n\neq \emptyset}}
(-1)^n G_{\mid_\leftthreetimes I_1}\ldots G_{\mid_\leftthreetimes I_n}.
\end{align*}
Hence,
\begin{align*}
P_\leftthreetimes(G)(-1)&=\sum_{n=1}^{|V(G)|} \sum_{\substack{V(G)=I_1\sqcup \ldots \sqcup I_n\\I_1,\ldots,I_n\neq \emptyset}}
(-1)^n P_\leftthreetimes(G_{\mid_\leftthreetimes I_1})(1)\ldots P_\leftthreetimes(G_{\mid_\leftthreetimes I_n})(1)\\
&=\sum_{n=1}^{|V(G)|} \sum_{\substack{V(G)=I_1\sqcup \ldots \sqcup I_n\\I_1,\ldots,I_n\neq \emptyset}}
(-1)^n \epsilon_\delta(G_{\mid_\leftthreetimes I_1})\ldots \epsilon_\delta(G_{\mid_\leftthreetimes I_n})\\
&=\sum_{n=1}^{|V(G)|} \sum_{\substack{V(G)=I_1\sqcup \ldots \sqcup I_n\\I_1,\ldots,I_n\neq \emptyset,\\
\forall p\in [n],\: E^+(G_{\mid_\leftthreetimes I_p})=\emptyset}} (-1)^n.
\end{align*}
The proof is similar for $P_{(\subset,\cap)}$. 
\end{proof}

\begin{theo}\label{theo2.10}
Let $G$ be a hypergraph. 
\begin{align*}
P_\subset(G)(-1)&=\sum_{\mbox{\scriptsize $\leq$ acyclic orientation of $G$}} (-1)^{\cl(\leq)},\\
P_\cap(G)(-1)&=(-1)^{|V(G)|}|\{\mbox{total acyclic orientations of $G$}\}|,\\
P_{\subset,\cap}(G)(-1)&=\sum_{\mbox{\scriptsize $\leq$ 1-max acyclic orientation of $G$}} (-1)^{\cl(\leq)}.
\end{align*}
\end{theo}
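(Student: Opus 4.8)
The plan is to start from the formula proved in the preceding lemma, namely that $P_\leftthreetimes(G)(-1)$ (and similarly $P_{\subset,\cap}(G)(-1)$) is a signed count of ordered set partitions $V(G)=I_1\sqcup\cdots\sqcup I_n$ into nonempty blocks on which the relevant induced sub-hypergraph has no nontrivial edge, weighted by $(-1)^n$. The idea is to set up a sign-reversing involution on the set of such ordered partitions, so that after cancellation only partitions coming from a genuine acyclic orientation survive, and each surviving partition contributes the correct sign $(-1)^{\cl(\leq)}$.

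First I would reformulate an ordered set partition $(I_1,\ldots,I_n)$ as a \emph{surjection} $f:V(G)\to[n]$, equivalently a total quasi-order $\leq_f$ on $V(G)$ (with $x\leq_f y\iff f(x)\leq f(y)$) together with a compatible labelling of its classes by $1,\ldots,n$; the condition $E^+(G_{\mid_\leftthreetimes I_p})=\emptyset$ for all $p$ says exactly that no block contains a forbidden configuration: for $\leftthreetimes=\cap$, no two vertices of a common edge lie in the same block (so $\leq_f$ is \emph{total} on every edge, i.e.\ strict), while for $\leftthreetimes=\subset$, no block contains an entire nontrivial edge (so $\leq_f$ restricted to each edge is a \emph{nontrivial} quasi-order). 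The case $\leftthreetimes=\cap$ is then immediate: the restriction map "forget the labels, keep $\leq_f$" is exactly $|V(G)|!$-to-... no — rather, each strict total order on every edge that also satisfies the path and antisymmetry conditions of Definition \ref{defi2.8} corresponds to choosing $n=|V(G)|$ (since a total acyclic orientation has all classes singletons), and the $|V(G)|$ blocks can be labelled in only... here one must check that a total acyclic orientation of $G$ is the same data as a linear extension compatible with $G$, and that the number of surjections $f$ inducing it is $1$ after accounting that acyclicity forces $\leq_f$ to be a genuine partial order whose linear extensions are counted; I would instead argue directly that the surjections $f$ with all $E^+(G_{\mid_\cap f^{-1}(i)})=\emptyset$ which are \emph{not} refinements forced by a total acyclic orientation cancel in pairs, leaving $(-1)^{|V(G)|}$ times the number of total acyclic orientations.

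For the main cases $\leftthreetimes=\subset$ and the mixed case, the key step is the involution. Given a surjection $f$ whose induced quasi-order $\leq_f$ is \emph{not} an acyclic orientation, exactly one of the last two bullets of Definition \ref{defi2.8} fails: either there exist $x<_f y$ with no strictly increasing path from $x$ to $y$, or there exist $x,y$ in different edges with $x\sim_f y$. In either situation I would locate a canonical place to "merge two consecutive blocks" or "split a block into two" — for instance, merging $I_p$ and $I_{p+1}$ when the order distinction between them is not \emph{forced} by an edge, and splitting otherwise — thereby pairing $f$ with $f'$ having $n\pm1$ blocks and opposite sign, while preserving the no-forbidden-block condition. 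This is the standard type of argument (as in Hopf-algebraic proofs of Stanley's theorem), but the hard part will be making the merge/split rule well-defined and genuinely involutive in the hypergraph setting, because "forced by an edge" is more delicate than in the graph case: for $\leftthreetimes=\subset$ a block may legally contain many vertices of an edge (just not all of them), so the criterion for when two adjacent blocks can be merged without creating a block containing a full nontrivial edge must be stated carefully, and similarly the 1-max condition in the mixed case must be shown to be exactly the surviving invariant. I would handle this by choosing, among all "mergeable/splittable" positions, the one of smallest index $p$ satisfying a precise local criterion, and then verifying that this choice is symmetric under the involution.

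Once the involution is established, the fixed points are precisely the surjections $f$ for which $\leq_f$ satisfies all three bullets of Definition \ref{defi2.8}, i.e.\ $\leq_f$ is an acyclic orientation, together with the extra condition (for $\leftthreetimes=\subset$) that each block is legal, which amounts to $\leq_{\mid e}$ being nontrivial on each edge — already built into acyclicity — and (for the mixed case) the extra condition that the top block of each edge is a singleton, i.e.\ $\leq$ is 1-max. Finally I would observe that for a fixed acyclic orientation $\leq$, the surjections $f$ realising it as $\leq_f$ are in bijection with the linear extensions of the induced partial order on $V(G)/\!\sim$, so summing $(-1)^n=(-1)^{\cl(\leq)}$ over all of them would overcount — hence I would instead build the involution to also cancel all non-trivial relabellings of a given $\leq$ except a single canonical one, so that each acyclic orientation contributes exactly $(-1)^{\cl(\leq)}$ once; in the $\cap$-case $\cl(\leq)=|V(G)|$, recovering the stated formula. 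The remaining verifications (that merges/splits preserve legality, that the canonical choice is involutive, that fixed points are exactly as described) are routine once the local rule is pinned down, so I would present the local rule first and then check these points in turn.
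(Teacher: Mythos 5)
Your starting point is right (the preceding lemma expressing $P_\leftthreetimes(G)(-1)$ and $P_{\subset,\cap}(G)(-1)$ as alternating sums over ordered partitions of $V(G)$ with admissible blocks), and your translation of the admissibility conditions into properties of the induced total quasi-order is correct. But the proof has a genuine gap: the sign-reversing involution that is supposed to carry the entire argument is never constructed. You describe it only as ``merge two adjacent blocks when the distinction between them is not forced by an edge, split otherwise'', you yourself flag that making this rule well-defined and involutive in the hypergraph setting is ``the hard part'', and you then defer it. Worse, you correctly notice that the naive fixed-point analysis fails --- a single acyclic orientation $\leq$ is realised by many admissible ordered partitions (one for each compatible total ordering), so the survivors would overcount --- and your remedy is to ask the still-unconstructed involution to additionally cancel all but one canonical representative per orientation. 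That extra cancellation is precisely the nontrivial content of the theorem: for a fixed acyclic orientation $\leq$, the alternating sum of $(-1)^{\#\mathrm{blocks}}$ over all admissible orderings inducing $\leq$ must equal $(-1)^{\cl(\leq)}$, and this is an order-polynomial/Ehrhart reciprocity statement. The paper obtains it by grouping the ordered partitions into pairs (acyclic orientation, compatible total quasi-order) and invoking the duality principle for the strict Ehrhart polynomial of the quotient poset evaluated at $-1$; it does not prove it by an involution, so you cannot appeal to it implicitly, and calling the remaining work ``routine once the local rule is pinned down'' is unjustified --- the local rule \emph{is} the theorem.

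Two further points would need real work even with a candidate rule in hand. First, in the $\subset$ case a block may legitimately contain several vertices of an edge (just not all of them) and the surviving quasi-orders can have nontrivial classes, so $\cl(\leq)<|V(G)|$ is possible; this is exactly where the graph-case involution arguments (Stanley/Blass--Sagan style) do not transfer verbatim, and your sketch gives no criterion distinguishing ``mergeable'' from ``forced'' positions that provably respects admissibility. Second, in the mixed case the admissibility condition $E^+(G_{\mid^{(p)} I_p})=\emptyset$ depends on the \emph{order} of the blocks (it involves edges contained in $I_1\sqcup\ldots\sqcup I_p$), so a merge or split of adjacent blocks does not obviously preserve it, and the identification of the fixed points with 1-max orientations is asserted rather than proved. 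Your treatment of the $\cap$ case is likewise left unfinished (the argument is abandoned mid-sentence and replaced by another unspecified cancellation); in the paper this case follows from the general grouping by simply identifying which pairs survive, namely those whose orientation is total, forcing all classes to be singletons. As it stands, your text is a plausible strategy outline, not a proof.
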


\begin{proof}
Let $\leq$ be an acyclic orientation of the hypergraph $G$ and let $\leq'$ be a linear extension of $\leq$:
$\leq'$ is a total quasi-order on $V(G)$ such that 
\begin{align*}
&\forall x,y\in V(G),&x\leq y&\Longrightarrow x\leq' y,\\
&&x\leq y\mbox{ and }y\leq x&\Longleftrightarrow x\leq' y\mbox{ and }y\leq' x.
\end{align*}
Let $I_1,\ldots,I_k$ be the classes of $\sim'$, indexed in such a way that for any $(x_1,\ldots,x_k)\in I_1\times \ldots \times I_k$,
$x_1\leq'\ldots \leq' x_k$. For any nontrivial edge $e\in E(G)$, $\leq_{\mid e}$ is a nontrivial total quasiorder,
so is equal to $\leq'_{\mid e}$ which in turn is nontrivial. As a consequence, no nontrivial edge is included
in a single class of $\sim'$: for any $p\in [k]$, $E^+(G_{\mid_\subset I_k})=\emptyset$. 

If $\leq$ is a quasi-order on a set $X$, a linear extension of $\leq$ is a total quasi-order $\leq'$ on the same set $X$,
such that 
\begin{align*}
&\forall x,y\in X,&
x\leq y\mbox{ and }y\leq x&\Longleftrightarrow x\leq' y\mbox{ and }y\leq' x,\\
&&x\leq y&\Longrightarrow x\leq' y.
\end{align*}
We put
\begin{align*}
A&=\{(\leq,\leq')\mid\mbox{$\leq$ acyclic orientation of $G$,\: $\leq'$ linear extension of $\leq$}\},\\
B&=\{(I_1,\ldots,I_k)\mid V(G)=I_1\sqcup \ldots \sqcup I_n,\:I_1,\ldots,I_n\neq \emptyset,\:
\forall p\in [n],\: E^+(G_{\mid_\subset I_p})=\emptyset\},
\end{align*}
and, with the preceding notations, we obtain a map
\[\iota:\left\{\begin{array}{rcl}
A&\longrightarrow&B\\
(\leq,\leq')&\longrightarrow&(I_1,\ldots,I_k).
\end{array}\right.\]

Let us prove that $\iota$ is injective. If $\iota(\leq,\leq')=\iota(\preceq,\preceq')$, then the classes of 
$\leq'$ and $\preceq'$ are the same, and in the same order:  $\leq'=\preceq'$. Let us assume that $x <y$.
As $\leq$ is an acyclic orientation of $G$, there exists a path $(x=x_0,\ldots,x_k=y)$ in $G$,
with $x_0<\ldots <x_k$. Then $x_0<'\ldots <' x_k$, so $x_0\prec'\ldots \prec'x_k$.
Let $p\in [k]$. $x_{p-1}$ and $x_p$ are in the same edge $e\in E(G)$. As $\preceq_{\mid e}$ is a total quasi-order,
$\preceq_{\mid e}=\preceq_{\mid e}'$, so $x_{p-1}\prec x_p$. By transitivity, $x\prec y$. By symmetry,
$\prec=<$, so $\preceq=\leq$.\\

Let us prove that $\iota$ is surjective. Let $(I_1,\ldots, I_n)\in B$. We define a total quasi-order $\leq'$ on $V(G)$ 
by $x\leq' y$ if $x\in I_p$ and $y\in I_q$, with $p\leqslant q$. We then define a partial quasi-order $\leq$ on $V(G)$
by $x\leq y$ if there exists a path $(x=x_0,\ldots,x_k=y)$ in $G$ with for any $p\in [k]$, $x_{p-1}<'x_p$.
Then $\leq'$ is a linear extension of $\leq$, and it is not difficult to prove that $\leq$ is an acyclic orientation of $G$. 
Moreover, $\iota(\leq,\leq')=(I_1,\ldots,I_k)$. \\

Therefore,
\begin{align*}
P_\subset(G)(-1)&=\sum_{(I_1,\ldots,I_n)\in B} (-1)^k=\sum_{(\leq,\leq')\in A}(-1)^{\cl(\leq)}\\
&=\sum_{\mbox{\scriptsize $\preceq$ acyclic orientation of $G$}} 
\left(\sum_{\mbox{\scriptsize $\preceq'$ linear extension of $\preceq$}}(-1)^{|V(G)/\sim|}\right).
\end{align*}
Let $\preceq$ be the partial order on $V(G)/\sim$ induced by $\leq$ and $\mathrm{Hasse}(\preceq)$ its Hasse graph. Then,
by the duality principle \cite[Corollary 4.7]{Foissy43}, for any acyclic orientation $\leq$ of $G$,
\begin{align*}
\sum_{\mbox{\scriptsize $\preceq'$ linear extension of $\preceq$}}(-1)^{|V(G)/\sim|}
&= \Ehr_{Str}(\mathrm{Hasse}(\preceq))(-1)= (-1)^{\cl(\leq)},
\end{align*}
where $\Ehr_{Str}$ is the strict Ehrhart polynomial \cite[Proposition 4.4]{Foissy43}. Hence, 
\begin{align*}
P_\subset(G)(-1)&=\sum_{\mbox{\scriptsize $\leq$ acyclic orientation of $G$}} (-1)^{\cl(\leq)}.
\end{align*}

Let us now consider $P_\cap$. We put 
\[B_\cap=\{(I_1,\ldots,I_n)\mid V(G)=I_1\sqcup \ldots \sqcup I_n,\:I_1,\ldots,I_n\neq \emptyset,\:
\forall p\in [n],\: E^+(G_{\mid_\cap I_p})=\emptyset\}.\]
If $(I_1,\ldots,I_k) \in B_\cap$, then for any $I$, $E^+(G_{\mid_\cap I_p})\subset E^+(G_{\mid_\subset I_p})=\emptyset$,
so $(I_1,\ldots,I_k)\in B$. We proved that $B_\cap\subseteq B$. We put $A_\cap=\iota^{-1}(B_\cap)$. 
If $(\preceq,\preceq') \in A_\cap$ then for any edge $e$ of $G$,
for any class $C$ of $\preceq'$, $C\cap e$ is $\emptyset$ or is a singleton. Therefore, $\preceq_{\mid e}=\preceq'_{\mid  e}$ 
(as $\preceq_{\mid e}$ is total), so $\preceq_{\mid e}$ is a total order: $\preceq$ is a total acyclic orientation of $G$.
Conversely, let $\preceq$ is a total acyclic orientation of $G$ and $\preceq'$ a linear extension of $\preceq$.
If $x\sim'_y$, then $x$ and $y$ belong to a same edge of $G$, and then $x\sim'_{\mid e} y$ and,
as $\sim_{\mid e}=\sim'_{\mid e}$ is a total order, $x=y$ and finally $\preceq'$ is a total order. 
Hence, $I_1,\ldots,I_n$ are singletons. Therefore, obviously $(I_1,\ldots,I_n)\in B_\cap$. 
We obtain
\[P_\cap(G)=\sum_{\mbox{\scriptsize $\leq$ total acyclic orientation of $G$}} (-1)^{\cl(\leq)}.\]
Let $\leq$ be a total acyclic orientation of $G$. If $x\sim y$, then $x$ and $y$ belong to a common edge $e$ of $G$.
As $\leq_{\mid e}$ is a total order, $x=y$, so the classes of $\leq$ are singleton and $\cl(\leq)=|V(G)|$, 
which gives the result. \\

Let us finally consider $P_{\cap,\subset}$. 
\[B_{\cap,\subset}=\{(I_1,\ldots,I_n)\mid V(G)=I_1\sqcup \ldots \sqcup I_n,\:I_1,\ldots,I_n\neq \emptyset,\:
\forall p\in [n],\: E^+(G_{\mid_\cap^{(p)} I_p})=\emptyset\}.\]
If $(I_1,\ldots,I_k) \in B_{\cap,\subset}$, then for any $I$, $E^+(G_{\mid I_p})\subset E^+(G_{\mid^{(i)} I_p})
=\emptyset$,
so $(I_1,\ldots,I_k)\in B$. We proved that $B_{\cap,\subset}\subseteq B$. We put $A_{\cap,\subset}=\iota^{-1}(B_{\cap,\subset})$. 
If $(\preceq,\preceq') \in A_{\cap,\subset}$, then for any $p$, for any edge $e$ included in $I_1\sqcup \ldots \sqcup I_p$,
$e\cap I_p$ is empty or is a singleton. Hence, for any  edge $e$, the maximal class of $e$ (for $\preceq$ or for 
$\preceq'$, as they coincide on $e$), is a singleton, so $\preceq$ is 1-max. Conversely, 
Let $\preceq$ be a 1-max acyclic orientation of $G$ and $\preceq'$ be a linear extension of $\preceq$.
Let $1\leq p\leq n$ and $e$ be a nonempty edge of $G_{\mid_\cap^{(p)} I_p}$. There exists an edge $f$ such that 
$f\subseteq I_1\sqcup \ldots\sqcup I_p$ and $e=f\cap I_p$. As $\preceq$ is 1-max, the maximal class of $f$ is a singleton,
so $f\cap I_p$ is a singleton: we obtain that $e$ is trivial, so $G_{\mid^{(p)} I_p}$ has no non trivial edge.
Therefore, $(\preceq,\preceq')\in A_{\subset,\cap}$. We finally get
\begin{align*}
P_{\subset,\cap}(G)(-1)&=\sum_{\mbox{\scriptsize $\leq$ acyclic 1-max orientation of $G$}} (-1)^{\cl(\leq)}. \qedhere
\end{align*}\end{proof}

Let us give another interpretation for $P_\cap$.

\begin{notation}
Let $G$ be a hypergraph. We associate to $G$ a graph $\Gamma(G)$, with $V(\Gamma(G))=V(G)$
and $E(\Gamma(G))$ is the set of pairs $\{x,y\}$ such that there exists $e\in E(G)$, $\{x,y\} \subseteq e$.
In particular, if $G$ is a graph, $\Gamma(G)=G$. This defines a species morphism  from $\bfH$ to the species
of simple graphs $\bfG_s$. 
\end{notation}

\begin{prop}\label{prop2.11}
The map $\Gamma:(\bfH,m,\Delta^{(\cap,\cap)},\delta^{(\cap)})\longrightarrow (\bfG_s,m,\Delta,\delta)$
is a morphism of twisted bialgebra with a contraction-extraction coproduct.
Moreover, $P_\cap=P_{chr}\circ \calF[\Gamma]$.
\end{prop}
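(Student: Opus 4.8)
The plan is to verify directly that the species morphism $\Gamma$ intertwines each structure map on the two sides, and then to read off the identity $P_\cap=P_{chr}\circ\calF[\Gamma]$ from the uniqueness of the double bialgebra morphism into $(\K[X],m,\Delta,\delta)$.

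The one elementary fact on which everything rests is this: for distinct vertices $x,y$ of a hypergraph $G$, the pair $\{x,y\}$ is an edge of $\Gamma(G)$ if and only if $x$ and $y$ lie in a common edge of $G$. Hence a sequence $(x_0,\dots,x_k)$ is a path in $G$ exactly when, for each $i$, either $x_{i-1}=x_i$ or $\{x_{i-1},x_i\}\in E(\Gamma(G))$; in particular $G$ is connected iff $\Gamma(G)$ is connected, and $E^+(\Gamma(G))=\emptyset$ iff $E^+(G)=\emptyset$. I would begin by recording this, then observe that $\Gamma(GG')=\Gamma(G)\Gamma(G')$ is immediate from $E(GG')=E(G)\sqcup E(G')$, and prove the key identity for induced substructures: for $I\subseteq V(G)$ one has $\Gamma(G_{\mid_\cap I})=\Gamma(G)_{\mid I}$. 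Indeed the edges of $G_{\mid_\cap I}$ are the sets $e\cap I$ with $e\in E(G)$, so for $x,y\in I$ the pair $\{x,y\}$ lies in some edge of $G_{\mid_\cap I}$ iff it lies in some edge of $G$; and for a (simple) graph the $\cap$-induced subgraph is the usual induced subgraph. This shows $\Gamma$ sends $\Delta^{(\cap,\cap)}$ to $\Delta$ and the counit $\varepsilon$ to $\varepsilon$.

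Next I would treat the contraction--extraction coproduct. From the identity above together with multiplicativity, $\Gamma(G\mid_\cap\sim)=\prod_{C\in V(G)/\sim}\Gamma(G_{\mid_\cap C})=\prod_C\Gamma(G)_{\mid C}=\Gamma(G)\mid\sim$. For the contraction, since $E(G/\sim)=\{\pi_\sim(e)\mid e\in E(G)\}$, two distinct classes $\overline{x}\neq\overline{y}$ are adjacent in $\Gamma(G/\sim)$ iff there are $x'\sim x$, $y'\sim y$ lying in a common edge of $G$, which is exactly adjacency of $\overline{x},\overline{y}$ in $\Gamma(G)/\sim$; so $\Gamma(G/\sim)=\Gamma(G)/\sim$ under the identification $V(\Gamma(G))=V(G)$. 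Finally, applying the connectedness criterion of the first step to $G_{\mid_\cap C}$ together with $\Gamma(G_{\mid_\cap C})=\Gamma(G)_{\mid C}$ shows that $G_{\mid_\cap C}$ is connected iff $\Gamma(G)_{\mid C}$ is; hence $\sim\in\eq_\cap[G]$ iff $\sim$ indexes a term of $\delta$ on $\Gamma(G)$. Combining these term by term in $\delta^{(\cap)}(G)=\sum_{\sim\in\eq_\cap[G]}G/\sim\otimes G\mid_\cap\sim$, and noting that the counits $\epsilon_\delta$ agree (by the no-edge criterion), gives that $\Gamma$ is a morphism of twisted bialgebras with contraction--extraction coproduct; applying the bosonic Fock functor then yields a morphism of double bialgebras $\calF[\Gamma]\colon(\calF[\bfH],m,\Delta^{(\cap)},\delta^{(\cap)})\to(\calF[\bfG_s],m,\Delta,\delta)$.

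For the last assertion, $P_{chr}$ is the unique double bialgebra morphism $(\calF[\bfG_s],m,\Delta,\delta)\to(\K[X],m,\Delta,\delta)$, so $P_{chr}\circ\calF[\Gamma]$ is a double bialgebra morphism $(\calF[\bfH],m,\Delta^{(\cap)},\delta^{(\cap)})\to(\K[X],m,\Delta,\delta)$; by the uniqueness of such a morphism (\cite[Theorem 3.9]{Foissy40}, as recalled before Proposition \ref{prop2.1}) it must equal $P_\cap$. Alternatively one can argue directly from Proposition \ref{prop2.1}: $P_\cap(G)(N)$ counts the maps $f\colon V(G)\to[N]$ injective on every edge, i.e.\ with $f(x)\neq f(y)$ whenever $\{x,y\}\in E(\Gamma(G))$, which is exactly the number of proper $N$-colourings of $\Gamma(G)$, so $P_\cap(G)$ and $P_{chr}(\Gamma(G))$ agree at every positive integer and hence as polynomials. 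The only point needing a little care in the whole argument is the matching of the indexing sets of the two coproducts $\delta$, i.e.\ the equivalence of the two connectedness conditions, which is precisely where the path/walk correspondence of the first step is used; everything else is bookkeeping.
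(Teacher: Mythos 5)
Your proposal is correct and follows essentially the same route as the paper: verify that $\Gamma$ commutes with products, with $\cap$-restriction (so with $\Delta^{(\cap,\cap)}$), and with contractions/extractions via the equivalence $\sim\in\eq_\cap[G]\Leftrightarrow\sim\in\eq_c[\Gamma(G)]$, then apply the Fock functor and invoke uniqueness of the double bialgebra morphism to $\K[X]$ to get $P_\cap=P_{chr}\circ\calF[\Gamma]$. Your added details (the path correspondence justifying the connectedness equivalence, and the alternative counting argument via Proposition \ref{prop2.1}) are sound but do not change the structure of the argument.
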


\begin{proof}
Obviously, $\Gamma$ is an algebra morphism. 
Let $G\in \calH[X]$ be a hypergraph and $I\subset X$. Then $\Gamma(G_{\mid_\cap I})=\Gamma(G)_{\mid I}$.
If $\sim \in \eq[X]$, then $\sim\in \eq_\cap[G]$ if, and only if, $\sim \in \eq_c[\Gamma(G)]$. Moreover, if this holds,
\begin{align*}
\Gamma(G/\sim)&=\Gamma(G)/\sim,&
\Gamma(G\mid_\cap \sim)&=\Gamma(G)\mid_\cap \sim.
\end{align*}
This implies that $\Gamma$ is a coalgebra morphism.
As a consequence, for any nonunitary commutative bialgebra $V$, the map $\calF_V[\Gamma]:\calF_V[\bfH]
\longrightarrow \calF_V[\bfG]$ is a double bialgebra morphism.
In the particular case $V=\K$, by unicity of the unique double bialgebra morphism from $\calF[\bfH]$ to $\K[X]$,
\[P_\cap=P_{chr}\circ \calF[\Gamma]. \qedhere\]
\end{proof}

Therefore, by Stanley's theorem: 

\begin{prop}
For any hypergraph $G$,
\[P_\cap(G)(-1)=(-1)^{|V(G)|}\sharp\{\mbox{acyclic orientations of $\Gamma(G)$}\}.\]
\end{prop}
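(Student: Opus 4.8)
The plan is to deduce this immediately from Proposition \ref{prop2.11} together with Stanley's theorem. By Proposition \ref{prop2.11}, $P_\cap=P_{chr}\circ \calF[\Gamma]$, so for any hypergraph $G$ one has the equality of polynomials $P_\cap(G)=P_{chr}(\Gamma(G))$ in $\K[X]$. Evaluating at $X=-1$ and applying Stanley's theorem \cite{Stanley1973} to the graph $\Gamma(G)$ gives
\[P_\cap(G)(-1)=P_{chr}(\Gamma(G))(-1)=(-1)^{|V(\Gamma(G))|}\sharp\{\mbox{acyclic orientations of $\Gamma(G)$}\}.\]
Since $V(\Gamma(G))=V(G)$ by the definition of $\Gamma$, we have $|V(\Gamma(G))|=|V(G)|$, and the statement follows.

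Alternatively, one could argue directly from Theorem \ref{theo2.10}, which already gives $P_\cap(G)(-1)=(-1)^{|V(G)|}|\{\mbox{total acyclic orientations of $G$}\}|$. It would then suffice to exhibit a bijection between the total acyclic orientations of the hypergraph $G$ and the acyclic orientations of the graph $\Gamma(G)$ in the usual sense (equivalently, as described in the remark following Definition \ref{defi2.8}). A total acyclic orientation of $G$ is in particular a partial order $\leq$ on $V(G)$ whose restriction to each edge of $G$ is a total order; restricting attention to the pairs $\{x,y\}$ that are edges of $\Gamma(G)$ recovers an acyclic orientation of $\Gamma(G)$, and conversely any acyclic orientation of $\Gamma(G)$ yields such a $\leq$ by declaring $x\leq y$ whenever there is a directed path from $x$ to $y$. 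The only point to check is that the path and acyclicity conditions match up, which is immediate because, by construction of $\Gamma$, a sequence of vertices is a path in $G$ if and only if it is a path in $\Gamma(G)$.

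There is no real obstacle here: the substance has already been done in Proposition \ref{prop2.11} (the fact that $\Gamma$ is a morphism of twisted bialgebras with contraction-extraction coproduct, hence induces a double bialgebra morphism after applying $\calF$, hence intertwines the two chromatic polynomials) and in Stanley's theorem, which is quoted in the text. If one preferred the second route, the only mild care needed is the verification that the bijection respects the "there exists an increasing path" clauses of Definition \ref{defi2.8}, but this reduces to the observation on paths stated above.
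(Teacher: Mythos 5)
Your first argument is exactly the paper's: the proposition is stated as an immediate consequence of Proposition \ref{prop2.11} ($P_\cap=P_{chr}\circ \calF[\Gamma]$) combined with Stanley's theorem applied to $\Gamma(G)$, noting $V(\Gamma(G))=V(G)$. The proposal is correct and takes essentially the same route; the alternative sketch via Theorem \ref{theo2.10} is unnecessary but harmless.
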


\begin{remark} For any hypergraph $G$, $P_\cap(G)$ is the chromatic polynomial of a graph. This is generally not the case for $P_\subset(G)$.
By Example \ref{ex2.1}, $P_\subset(T_n)=X^n-X$: if $n\geq 3$, this is not the chromatic polynomial of a graph, as for such a polynomial,
the non-zero coefficients form a connected sequence. Similarly, $P_{\subset,\cap}(G)$ is generally not the chromatic polynomial of a graph,
as they are generally not with integral coefficients. 
\end{remark}

\subsection{Antipodes}

From \cite[Corollary 2.3]{Foissy40}:

\begin{cor}\label{cor2.13}
For $\leftthreetimes \in \{\cap,\subset\}$, let us denote by $S_\leftthreetimes$ the antipode of $(\calF[\bfH],m,\Delta^{(\leftthreetimes,\leftthreetimes)})$. For any hypergraph $G$,
\begin{align*}
S_\subset(G)&=\sum_{\sim\in \eq_\subset[G]}\left(\sum_{\mbox{\scriptsize $\leq$ acyclic orientation of $G/\sim$}} (-1)^{\cl(\leq)}\right)
G\mid_\subset \sim,\\
S_\subset(G)&=\sum_{\sim\in \eq_\cap[G]}(-1)^{\cl(\sim)}\sharp\{\mbox{$\leq$ acyclic orientation of $G/\sim$}\}
G\mid_\cap \sim.
\end{align*}
\end{cor}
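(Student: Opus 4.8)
The plan is to apply the general antipode formula for double bialgebras, which the excerpt recalled: for $(\calF[\bfH],m,\Delta^{(\leftthreetimes)},\delta^{(\leftthreetimes)})$ with unique polynomial invariant $P_\leftthreetimes$, one has $S_\leftthreetimes=({P_\leftthreetimes}_{\mid X=-1}\otimes \id)\circ \delta^{(\leftthreetimes)}$. Expanding $\delta^{(\leftthreetimes)}(G)=\sum_{\sim\in \eq_\leftthreetimes[G]} G/\sim\otimes G\mid_\leftthreetimes \sim$, this gives immediately
\[S_\leftthreetimes(G)=\sum_{\sim\in \eq_\leftthreetimes[G]} P_\leftthreetimes(G/\sim)(-1)\; G\mid_\leftthreetimes \sim.\]
So the only remaining task is to substitute the evaluations $P_\subset(G/\sim)(-1)$ and $P_\cap(G/\sim)(-1)$ that were computed in Theorem \ref{theo2.10}.

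First I would invoke \cite[Corollary 2.3]{Foissy40} (equivalently the formula displayed in the introduction) to get the boxed expression $S_\leftthreetimes(G)=\sum_{\sim\in \eq_\leftthreetimes[G]} P_\leftthreetimes(G/\sim)(-1)\, G\mid_\leftthreetimes \sim$; this step is purely formal, just unwinding $\delta^{(\leftthreetimes)}$. Second, for $\leftthreetimes=\subset$, I substitute the first line of Theorem \ref{theo2.10}, namely $P_\subset(G/\sim)(-1)=\sum_{\leq\text{ acyclic orientation of }G/\sim}(-1)^{\cl(\leq)}$, which yields exactly the stated formula for $S_\subset(G)$. Third, for $\leftthreetimes=\cap$, I substitute the second line of Theorem \ref{theo2.10}, $P_\cap(G/\sim)(-1)=(-1)^{|V(G/\sim)|}|\{\text{total acyclic orientations of }G/\sim\}|$, and I use that $|V(G/\sim)|=\cl(\sim)$ (the number of classes of $\sim$) to rewrite the sign as $(-1)^{\cl(\sim)}$; this produces the stated formula for the $\cap$ case. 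I would also note in passing that the statement's second displayed equation is misprinted as $S_\subset$ but should read $S_\cap$.

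There is essentially no obstacle here: the corollary is a direct consequence of already-proven results (the antipode formula from \cite{Foissy40} and the evaluation at $-1$ from Theorem \ref{theo2.10}), and the only nontrivial bookkeeping is the identification $|V(G/\sim)|=\cl(\sim)$, which is immediate from the definition of $G/\sim$ as having vertex set $V(G)/\sim$. One small point worth a sentence is that the sum over $\sim\in\eq_\leftthreetimes[G]$ already guarantees each $G/\sim$ is a genuine hypergraph to which $P_\leftthreetimes$ and Theorem \ref{theo2.10} apply, so no further connectivity check is needed. I would keep the whole proof to two or three lines, essentially "apply \cite[Corollary 2.3]{Foissy40} and Theorem \ref{theo2.10}, noting $|V(G/\sim)|=\cl(\sim)$."
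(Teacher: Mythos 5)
Your proposal is correct and is precisely the paper's route: the corollary is stated as an immediate consequence of the antipode formula $S_\leftthreetimes=({P_\leftthreetimes}_{\mid X=-1}\otimes \id)\circ \delta^{(\leftthreetimes)}$ from \cite[Corollary 2.3]{Foissy40} together with the evaluations at $-1$ of Theorem \ref{theo2.10}, exactly as you unwind it, and your bookkeeping $|V(G/\sim)|=\cl(\sim)$ and the remark that the second displayed line should read $S_\cap$ are both right. The only point worth adding is that the orientations counted in the $\cap$-formula are the \emph{total} acyclic orientations of $G/\sim$ (equivalently, acyclic orientations of $\Gamma(G/\sim)$), as your substitution from Theorem \ref{theo2.10} makes explicit.
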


We cannot use the formalism of double bialgebras for the antipode of $(\calF[\bfH],m,\Delta^{(\subset,\cap)})$, 
which we simply denote by $S$. We shall use Takeuchi's formula \cite{Takeuchi1971}: for any nonempty hypergraph $G$,
\begin{align}\label{eq2}
S(G)&=\sum_{k=1}^{|V(G)|}(-1)^k\sum_{\substack{V(G)=I_1\sqcup \ldots \sqcup I_k,\\ I_1,\ldots,I_k\neq \emptyset}}
G_{\mid^{(1)}I_1}\ldots G_{\mid^{(k)}I_k}.
\end{align}
Let us consider the hypergraphs  appearing in this sum. For such a hypergraph $H$,
$V(H)=V(G)$, and the nonempty edges of $H$ are sets of the form $e\cap I_{\theta(e)}$,
where $e$ is a nonempty edge of $G$  and $\theta(e)\in [k]$. This leads to the following definition:

\begin{defi}
Let $G$ be a nonempty hypergraph, $\sim \in \eq[V(G)]$ and $\theta:E(G)\setminus \{\emptyset\}\longrightarrow V(G)/\sim$
be a map such that for any nonempty edge $e$ of $G$, $e\cap \theta(e)\neq \emptyset$.
\begin{enumerate}
\item We denote by $G\mid_\theta \sim$ the graph such that
\begin{align*}
V(G\mid_\theta \sim)&=V(G),&
E(G\mid_\theta \sim)&=\{e\cap \theta(e)\mid e\in E(G)\setminus \{\emptyset\}\}\cup\{\emptyset\}.
\end{align*}
\item We denote by $G/_\theta \sim$ the oriented graph such that $V(G/_\theta \sim)=V(G)/\sim$ 
and with set of arcs defined by the following:
for any edge $e\in E(G)$, for any $\pi \in V(G)/\sim$ such that $\pi\cap e\in \emptyset$ and $\pi\neq \theta(e)$,
there is an arc from $\pi$ to $\theta(e)$ in $G/_\theta \sim$.
\item We shall write that  $(\sim,\theta)\in \eq_{\subset,\cap}[G]$ if the connected components of $G\mid_\theta \sim$
are the classes of $\sim$ and if the oriented graph $G/_\theta \sim$ is acyclic.
\end{enumerate}
\end{defi}

\begin{prop}\label{prop2.15}
For any nonempty hypergraph $G$,
\[S(G)=\sum_{(\sim,\theta)\in \eq_{\subset,\cap}[G]} (-1)^{\cl(\sim)} G\mid_\theta \sim.\]
\end{prop}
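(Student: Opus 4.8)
The plan is to start from Takeuchi's formula (\ref{eq2}) and collect terms according to the isomorphism class of the hypergraph $H$ that appears. As already observed before the statement of Definition, any $H$ occurring in the sum has $V(H)=V(G)$ and its nonempty edges are of the form $e\cap I_{\theta(e)}$ for a nonempty edge $e$ of $G$ and an index $\theta(e)\in[k]$ with $e\cap I_{\theta(e)}\neq\emptyset$. So the first step is to rewrite the sum over ordered set partitions $(I_1,\ldots,I_k)$ as a sum over pairs $(\sim,\theta)$, where $\sim$ is the equivalence whose classes are the blocks $I_1,\ldots,I_k$, together with $\theta:E(G)\setminus\{\emptyset\}\longrightarrow V(G)/\sim$ selecting which block "survives" in each edge, and then an ordering (i.e.\ a total order, or rather a total quasi-order) compatible with the data. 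The hypergraph $G\mid_\theta\sim$ depends only on $(\sim,\theta)$, not on the chosen ordering, so the coefficient of a fixed $G\mid_\theta\sim$ in $S(G)$ will be $(-1)^{\cl(\sim)}$ times the signed count of orderings of the blocks that are \emph{consistent}, where a choice of order $I_1,\ldots,I_k$ is consistent precisely when, for every edge $e$ of $G$, the block $\theta(e)$ is the one with the largest index among the blocks meeting $e$ (this is exactly the condition making $E(G_{\mid^{(p)}I_p})$ match the edges $e\cap\theta(e)$ with $\theta(e)=I_p$).

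The second step is to analyze, for fixed $(\sim,\theta)$, the alternating sum $\sum_k(-1)^k\cdot\#\{\text{consistent orderings with }k\text{ blocks}\}$ — but of course the number of blocks is $\cl(\sim)$, fixed; the genuine subtlety is that in Takeuchi's formula the $I_p$ must be \emph{nonempty}, whereas collapsing a finer ordered partition to $\sim$ can merge blocks. So I would instead group the ordered partitions $(I_1,\ldots,I_k)$ first by the equivalence $\sim$ they refine to (i.e.\ by which vertices end up distinguished) — but actually the cleaner bookkeeping is: fix the \emph{finest} equivalence relation, namely let $\sim$ range over \emph{all} equivalences and $\theta$ over all admissible selection maps, and observe that an ordered set partition with no empty blocks and distinct-looking blocks corresponds to a pair $(\sim,\theta)$ plus a linear extension of a certain induced relation. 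Concretely, the data $(\sim,\theta)$ induces the oriented graph $G/_\theta\sim$ on $V(G)/\sim$: an arc $\pi\to\theta(e)$ whenever $\pi\cap e\neq\emptyset$ and $\pi\neq\theta(e)$; a consistent ordering of the blocks is exactly a linear extension of the transitive closure of this relation. Hence the signed count of consistent orderings is $\sum_{\leq'}(-1)^{\cl(\sim)}$ over linear extensions $\leq'$ of the preorder generated by $G/_\theta\sim$, which by the same duality/Ehrhart-reciprocity argument used in the proof of Theorem \ref{theo2.10} (the duality principle, \cite[Corollary 4.7]{Foissy43}) equals $(-1)^{\cl(\sim)}$ if $G/_\theta\sim$ is acyclic and $0$ otherwise — matching exactly the acyclicity condition in $\eq_{\subset,\cap}[G]$.

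The third step handles the connectedness condition. When $(\sim,\theta)$ is such that the classes of $\sim$ are \emph{not} the connected components of $G\mid_\theta\sim$, i.e.\ some block $C$ of $\sim$ is disconnected inside $G\mid_\theta\sim$, I want the total contribution to vanish. The mechanism is the same as in the counit computation in the proof of Theorem \ref{theo1.4} and in Takeuchi-type antipode formulas for graphs: either such $(\sim,\theta)$ never arises from an ordered partition with nonempty blocks in a way that survives — more precisely, if $C$ splits as $C'\sqcup C''$ with no edge of $G\mid_\theta\sim$ meeting both, then the orderings where $C$ is a single block can be matched (sign-reversingly) with orderings where $C'$ and $C''$ are separate consecutive blocks, producing the same hypergraph $G\mid_\theta\sim$ but with opposite signs — so these contributions cancel in pairs, leaving only the $(\sim,\theta)$ with $\sim$ equal to the connected-components equivalence of $G\mid_\theta\sim$, i.e.\ exactly $\eq_{\subset,\cap}[G]$. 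Combining the three steps gives $S(G)=\sum_{(\sim,\theta)\in\eq_{\subset,\cap}[G]}(-1)^{\cl(\sim)}\,G\mid_\theta\sim$.

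The main obstacle I anticipate is getting the bookkeeping of the bijection between ordered set partitions and triples (equivalence, selection map, linear extension) exactly right, in particular making the two cancellation phenomena — blocks that can be merged (forcing $\sim$ to be as fine as the surviving edges allow, controlled by $\theta$) and blocks that are internally disconnected (forcing $\sim$ to be as coarse as the connected components) — interact correctly without double-counting. Once the correct index set is identified, the acyclicity half of the argument is a verbatim reuse of the duality principle already invoked for Theorem \ref{theo2.10}, and the connectedness half is a standard sign-reversing involution; neither is hard, but stating the involution cleanly on ordered partitions with the edge-maximum constraint is the delicate point.
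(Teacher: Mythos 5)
Your plan assembles the right ingredients (regrouping Takeuchi's formula by pairs $(\sim,\theta)$, the oriented graph $G/_\theta\sim$, Ehrhart reciprocity for acyclicity, a cancellation mechanism for connectedness), but the way you combine them has a genuine gap, located in Step 2. With your Step 1 bookkeeping the blocks of the ordered partition \emph{are} the classes of $\sim$, so every consistent ordering attached to a fixed pair $(\sim,\theta)$ has exactly $\cl(\sim)$ blocks and the same sign; hence your ``signed count of consistent orderings'' is $(-1)^{\cl(\sim)}$ times the \emph{number} of linear extensions of the poset generated by $G/_\theta\sim$, which is usually bigger than $1$. Concretely, take $G$ with vertices $a,b,c$ and unique nontrivial edge $\{a,b\}$, $\sim$ the discrete equivalence and $\theta(\{a,b\})=\overline{b}$: there are three consistent orderings, so this pair contributes $-3\,T_1^3$ in your grouping, whereas the Proposition requires it to contribute $-T_1^3$ (one checks directly that the total coefficient of $T_1^3$ in $S(G)$ is $-2$, coming from the two admissible $\theta$'s). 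The Ehrhart/duality collapse to $(-1)^{\cl(\sim)}$ that you invoke from the proof of Theorem \ref{theo2.10} only works when the sum runs over extensions \emph{with ties allowed} (blocks that are unions of $\sim$-classes), the sign being $(-1)^{\text{number of blocks of the extension}}$, not the constant $(-1)^{\cl(\sim)}$; with a constant sign no reciprocity argument can turn the number of linear extensions into $1$.

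This error cannot be absorbed by your Step 3: in the example above the pairs whose classes are the connected components contribute $-6$ before any cancellation, while the correct total is $-2$, so any sign-reversing involution must also pair away orderings attached to ``good'' pairs against merged (disconnected-block) configurations. Thus ``the bad pairs cancel among themselves, leaving only the pairs with $\sim$ equal to the connected-components equivalence'' is not what happens, and the actual content of an involution proof --- showing that exactly one ordered partition per pair of $\eq_{\subset,\cap}[G]$ survives (and that none survives when $G/_\theta\sim$ has a cycle) --- is missing. Conversely, if one reads your Step 2 as summing over extensions with ties (which is the paper's route), then those tied configurations are precisely the coarser pairs your Step 3 wants to cancel, and the two steps would process the same terms twice. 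The paper avoids the dilemma by a different bookkeeping: each ordered partition is attached to the pair $(\sim,\theta)$ in which $\sim$ is the connected-components equivalence of the resulting hypergraph (so blocks are unions of classes), the inner alternating sum is identified with the strict Ehrhart polynomial of $G/_\theta\sim$ evaluated at $-1$, and the duality principle gives $(-1)^{\cl(\sim)}$ in the acyclic case and $0$ otherwise; connectedness is built into the indexing and no separate involution is needed. Your proposal could be repaired either by adopting that grouping, or by designing one global merge/split involution (say at the first eligible position) and carrying out the fixed-point analysis in full --- but as written, neither half of the argument is correct on its own.
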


\begin{proof}
Let us denote by $\eq'_{\subset,\cap}[G]$ the set of pairs $(\sim,\theta)$ such that the connected components of $G\mid_\theta \sim$
are the classes of $\sim$. Rewriting (\ref{eq2}), we obtain that
\[S(G)=\sum_{(\sim,\theta)\in \eq'_{\subset,\cap}[G]} \left(
\sum_{\substack{V(G)=I_1\sqcup \ldots \sqcup I_k,\\ I_1,\ldots,I_k\neq \emptyset,\\
G_{\mid^{(1)}_{I_1}}\ldots G_{\mid^{(k)}_{I_k}}=G\mid_\theta \sim}}(-1)^k\right) G\mid_\theta \sim.\]
For any $(\sim,\theta) \in \eq'_{\subset,\cap}[G]$, we put
\[P(\sim,\theta)(X)=\sum_{\substack{V(G)=I_1\sqcup \ldots \sqcup I_k,\\ I_1,\ldots,I_k\neq \emptyset,\\
G_{\mid^{(1)}_{I_1}}\ldots G_{\mid^{(k)}_{I_k}}=G\mid_\theta \sim}} H_k(X) \in \K[X], \]
in such a way that (\ref{eq2}) is rewritten as
\[S(G)=\sum_{(\sim,\theta)\in \eq'_{\subset,\cap}[G]}  P(\sim,\theta)(-1)G\mid_\theta \sim.\]
By definition, for any $N\in \N$, $P(G,\sim)(N)$ is the number of maps $f:V(G)\longrightarrow [N]$ such that
\[G_{\mid^{(1)}_{f^{-1}(1)}}\ldots G_{\mid^{(N)}_{f^{-1}(N)}}=G\mid_\theta \sim.\]
We denote by $\calA_N$ the set of such maps $f$ and by $\calB_N$ the set of maps $g:V(G)/\sim \longrightarrow [N]$
such that if $(\pi_1,\pi_2)$ is an arc of $G/_\theta \sim$, then $g(\pi_1)<g(\pi_2)$.
 
Let us now define a bijection between $\calA_N$ and $\calB_N$. Let $f\in \calA_N$. If $x\sim y$, then by definition of 
$\eq_{\subset,\cap}[G]$, $x$ and $y$ are in the same connected component of $G\mid_\theta \sim$, so they necessarily
belong to the same $f^{-1}(i)$ and finally $f(x)=f(y)$. Therefore, $f$ induces a map $\overline{f}:V(G)/\sim\longrightarrow [N]$
such that for any $x\in V(G)$, $\overline{f}(\overline{x})=f(x)$. Let us prove that $\overline{f}\in \calB_N$. 
If $(\pi_1,\pi_2)$ is an arc of $G/_\theta \sim$, there exists an edge $e$ of $G$, such that $e\cap \pi_1\neq \emptyset$,
$e\cap \pi_2\neq \emptyset$, $\pi_1\neq \pi_2=\theta(e)$. As $e\cap \pi_2$ is an edge of $G\mid_\theta \sim=
G_{\mid^{(1)}_{f^{-1}(1)}}\ldots G_{\mid^{(N)}_{f^{-1}(N)}}$, necessarily 
\[\overline{f}(\pi_2)=\max f_{\mid e},\]
and $\overline{f}(\pi_1)<\overline{f}(\pi_2)$, so $\overline{f}\in \calB_N$. We have defined a map
\[\left\{\begin{array}{rcl}
\calA_N&\longrightarrow&\calB_N\\
f&\longmapsto&\overline{f}.
\end{array}\right.\]
It is obviously injective. Let $\overline{f}\in \calB_N$ and let $f:V(G)\longrightarrow [N]$ be the unique map such that 
for any $x\in V(G)$, $f(x)=\overline{f}(\overline{x})$. Let $e$ be a nonempty edge of $G$. By construction of
$G/_\theta \sim$, the maximum of $f$ over $e$ is obtained on $e\cap \theta(e)$, so the contribution of $e$ to the edges of 
$G_{\mid^{(1)}_{f^{-1}(1)}}\ldots G_{\mid^{(N)}_{f^{-1}(N)}}$ is $e\cap \theta(e)$: we obtain that 
$G_{\mid^{(1)}_{f^{-1}(1)}}\ldots G_{\mid^{(N)}_{f^{-1}(N)}}=G\mid_\theta \sim$. As a conclusion,
$f\in \calA_N$ and $\calA_N$ and $\calB_N$ are in bijection.

As a conclusion, $P(\sim,\theta)(X)$ is the strict Ehrhart polynomial $\Ehr_{str}(G/_\theta\sim)$ of the oriented graph $G\mid_\theta \sim$.
If this oriented graph is acyclic, by the duality principle \cite[Corollary 4.7]{Foissy43}, then 
\[P(\sim,\theta)(-X)=(-1)^{|V(G)/\sim|} \Ehr(G/_\theta \sim)(X),\]
where $\Ehr(G/_\theta \sim)$ is the Ehrhart polynomial of $G/_\theta \sim$. In particular, $\Ehr(G/_\theta \sim)(1)$
is the number of maps $f:V(G)/\sim \longrightarrow [1]$ such that for any arc $(\pi_1,\pi_2)$ of $G/_\theta\sim$,
$f(\pi_1)\leq f(\pi_2)$: this is obviously $1$. As a consequence, if $(\sim,\theta)\in \eq'_{\subset,\cap}[G]$,
then $P(\sim,\theta)(-1)=(-1)^{\cl(\sim)}$. Otherwise, $G/_\theta \sim$ is not acyclic, so $\Ehr_{str}(\sim,\theta)(X)=0$,
which implies that $P(G/_\theta \sim)(-1)=0$. The results immediately follows. \end{proof}

\begin{remark}
As $(\calF[\bfH],m,\Delta^{(\cap,\subset)})$ is the coopposite of $(\calF[\bfH],m,\Delta^{(\subset,\cap)})$, its antipode
is $S^{-1}$. Moreover, as $(\calF[\bfH],m)$ is commutative, $S$ is involutive, so $S^{-1}=S$ and the antipode
of $(\calF[\bfH],m,\Delta^{(\cap,\subset)})$ and $(\calF[\bfH],m,\Delta^{(\subset,\cap)})$ are the same. 
\end{remark}

\subsection{Coefficients of the chromatic polynomials}

\begin{notation}
Let $G$ be a hypergraph. For any $i,j\geq 1$, we denote by $\calN_G(i,j)$
the set of hypergraphs $G'$ of $G$ such that
\begin{align*}
V(G')&=V(G),&E^+(G')&\subset E^+(G),&\cc(G')&=i,&|E^+(G')|&=j.
\end{align*}
We denote by $N_G(i,j)$ the cardinality of $\calN_G(i,j)$. 
\end{notation}

\begin{lemma}\label{lem2.17}
Recall that  $\lambda_\subset$ is the inverse of $\lambda_0$ for the convolution product $\star_\subset$ induced by
$\delta^{(\subset)}$. For any hypergraph $G$,
\[\lambda_\subset(G)=\sum_{j\geq 0} (-1)^j N_G(\cc(G),j).\]
\end{lemma}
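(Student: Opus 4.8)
The plan is to introduce the linear form $\nu$ on $\calF[\bfH]$ defined by $\nu(G)=\sum_{j\geq 0}(-1)^j N_G(\cc(G),j)$ and to prove that it coincides with $\lambda_\subset$. Unfolding the definition of $N_G(i,j)$,
\[\nu(G)=\sum_{\substack{K\text{ sub-hypergraph of }G,\\ V(K)=V(G),\ E^+(K)\subseteq E^+(G),\ \cc(K)=\cc(G)}}(-1)^{|E^+(K)|},\]
and I will call such a $K$ a \emph{full spanning sub-hypergraph} of $G$. Since $\lambda_0$ is invertible for $\star_\subset$ by the first item of Proposition \ref{prop2.4}, it is enough to show that $\nu$ is a right convolution inverse of $\lambda_0$, that is $\lambda_0\star_\subset\nu=\epsilon_\delta$; then $\nu=\lambda_0^{\star_\subset-1}=\lambda_\subset$ and the formula follows.

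Since $\delta^{(\subset)}(G)=\sum_{\sim\in\eq_\subset[G]}G/\sim\otimes G\mid_\subset\sim$ and $\lambda_0$ is identically $1$, the identity $\lambda_0\star_\subset\nu=\epsilon_\delta$ reads
\[\sum_{\sim\in\eq_\subset[G]}\nu(G\mid_\subset\sim)=\epsilon_\delta(G)\qquad\text{for every hypergraph }G.\]
To prove it I would reorganize the set of \emph{all} sub-hypergraphs $K$ of $G$ with $V(K)=V(G)$ and $E^+(K)\subseteq E^+(G)$, grouping them by the equivalence $\sim_K\in\eq[V(G)]$ whose classes are the connected components of $K$. Every edge of $K$ lies inside a single class of $\sim_K$, so no edge of $K$ crosses two classes and each restriction $K_{\mid_\subset C}$ is the corresponding connected component of $K$; as $K_{\mid_\subset C}$ is a sub-hypergraph of $G_{\mid_\subset C}$ with the same vertex set, $G_{\mid_\subset C}$ is connected as well, hence $\sim_K\in\eq_\subset[G]$. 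Thus the sub-hypergraphs of $G$ of the above type are partitioned according to $\sim_K$, with blocks indexed inside $\eq_\subset[G]$.

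Next, for a fixed $\sim\in\eq_\subset[G]$, I claim that the block $\{K:\sim_K=\sim\}$ is exactly the set of full spanning sub-hypergraphs of $G\mid_\subset\sim$. Indeed $E^+(G\mid_\subset\sim)=\{e\in E^+(G)\mid e\subseteq C\text{ for some class }C\text{ of }\sim\}$ is precisely the set of edges of $G$ internal to $\sim$, and $\cc(G\mid_\subset\sim)$ equals the number of classes of $\sim$ because every $G_{\mid_\subset C}$ is connected; conversely, a sub-hypergraph $K'$ of $G\mid_\subset\sim$ with $V(K')=V(G)$ has no edge crossing two classes, so its components refine the classes of $\sim$, and it has as many components as $\sim$ has classes if and only if each class of $\sim$ is connected in $K'$, i.e. if and only if $\sim_{K'}=\sim$. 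Consequently $\sum_{K:\sim_K=\sim}(-1)^{|E^+(K)|}=\nu(G\mid_\subset\sim)$, and summing over $\sim\in\eq_\subset[G]$, since each $K$ as above lies in exactly one block,
\[\sum_{\sim\in\eq_\subset[G]}\nu(G\mid_\subset\sim)=\sum_{K}(-1)^{|E^+(K)|}=\sum_{T\subseteq E^+(G)}(-1)^{|T|},\]
which equals $1$ if $E^+(G)=\emptyset$ and $0$ otherwise, that is $\epsilon_\delta(G)$. This establishes $\lambda_0\star_\subset\nu=\epsilon_\delta$, hence $\nu=\lambda_\subset$.

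The only step requiring real care is this reorganization: checking that $\sim_K$ always belongs to $\eq_\subset[G]$, and in particular the precise identification of the block over $\sim$ with the full-spanning-subgraph count of $G\mid_\subset\sim$, which rests on the equality $E^+(G\mid_\subset\sim)=\{e\in E^+(G):e\text{ internal to }\sim\}$ together with the fact (valid exactly because $\sim\in\eq_\subset[G]$) that $G\mid_\subset\sim$ has one connected component per class of $\sim$. The remaining manipulations are formal, using only the shape of $\delta^{(\subset)}$, the value of $\lambda_0$, and the already proven invertibility of $\lambda_0$.
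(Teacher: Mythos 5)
Your proof is correct and follows essentially the same route as the paper: introduce the linear form $G\mapsto\sum_{j\geq 0}(-1)^jN_G(\cc(G),j)$, show it is a convolution inverse of $\lambda_0$ by checking $\lambda_0\star_\subset\nu=\epsilon_\delta$, and establish that identity via the bijection between subsets $F\subseteq E^+(G)$ and pairs consisting of $\sim\in\eq_\subset[G]$ (the equivalence whose classes are the connected components of $(V(G),F)$) together with an element of $\calN_{G\mid_\subset\sim}(\cl(\sim),j)$. Your grouping of spanning sub-hypergraphs by $\sim_K$ is exactly the paper's ``obvious bijection,'' just spelled out in more detail.
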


\begin{proof}
We define $\mu\in \calF[\bfH]^*$ by 
\[\mu(G)=\sum_{j\geq 0} (-1)^j N_G(\cc(G),j),\]
for any hypergraph $G$. 
Let us prove that for any hypergraph $G$, $\lambda_0\star_\subset \mu(G)=\epsilon_\delta(G)$. 
\begin{align*}
\lambda_0\star_\subset \mu(G)&=\sum_{\sim\in \eq_\subset[G]}\mu(G\mid_\subset \sim)\\
&=\sum_{\sim\in \eq_\subset[G]}\sum_{j\geq 0} (-1)^j N_G(\cl(\sim),j).
\end{align*}
There is an obvious bijection
\[\{F\subset E^+(G)\mid [F|=j\}\longrightarrow \bigsqcup_{\sim\in \eq_\subset[G]}\calN_{G\mid_\subset \sim}(\cl(\sim),j)\]
which sends $F\subset E^+(G)$ to the hypergraph $(V(G),F)$, belonging to $\calN_{G\mid_\subset \sim}(\cl'\sim),j)$
where $\sim$ is the equivalence on $V(G)$ which classes are the connected components of the hypergraph $(V(G),F)$.  Hence,
\begin{align*}
\lambda_0\star_\subset \mu(G)&=\sum_{F\subseteq E^+(G)}
(-1)^{|F|}\\
&=\begin{cases}
1\mbox{ if }|E^+(G)|=\emptyset,\\
0\mbox{ otherwise}
\end{cases}\\
&=\epsilon_\delta(G).\qedhere
\end{align*}
 \end{proof}

Let us deduce the following description of the coefficients of $P_\subset(G)$,
which can be found in \cite{Borowiecki2000,Tomescu98}:

\begin{prop}\label{prop2.19}
For any $i\geq 1$, the coefficient $a_i$ of $X^i$ in $P_\subset(G)$ is
\[a_i=\sum_{j\geq 0} (-1)^j N_G(i,j).\]
\end{prop}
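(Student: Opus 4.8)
The plan is to reduce the statement to the two results already in hand: the expansion $P_\subset(G)=\sum_{\sim\in\eq_\subset[G]}\lambda_\subset(G\mid_\subset\sim)X^{\cl(\sim)}$ from equation (\ref{eq1}) of Proposition \ref{prop2.5}, and the formula $\lambda_\subset(H)=\sum_{j\geq 0}(-1)^jN_H(\cc(H),j)$ from Lemma \ref{lem2.17}. First I would read off from (\ref{eq1}) that
\[a_i=\sum_{\substack{\sim\in\eq_\subset[G]\\ \cl(\sim)=i}}\lambda_\subset(G\mid_\subset\sim).\]
For $\sim\in\eq_\subset[G]$ one has $G\mid_\subset\sim=\prod_{C\in V(G)/\sim}G_{\mid_\subset C}$ with each factor connected, so the connected components of $G\mid_\subset\sim$ are exactly the $G_{\mid_\subset C}$ and $\cc(G\mid_\subset\sim)=\cl(\sim)$. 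Hence Lemma \ref{lem2.17} applies with $\cc(G\mid_\subset\sim)=i$, and substituting gives
\[a_i=\sum_{j\geq 0}(-1)^j\sum_{\substack{\sim\in\eq_\subset[G]\\ \cl(\sim)=i}}N_{G\mid_\subset\sim}(i,j).\]

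So everything comes down to the combinatorial identity $N_G(i,j)=\sum_{\sim\in\eq_\subset[G],\ \cl(\sim)=i}N_{G\mid_\subset\sim}(i,j)$, which I would prove by a bijection $\calN_G(i,j)\cong\bigsqcup_{\sim}\calN_{G\mid_\subset\sim}(i,j)$. Given $G'\in\calN_G(i,j)$, send it to $G'$ itself, recorded in the summand indexed by the equivalence $\sim_{G'}$ whose classes are the connected components of $G'$ (so $\cl(\sim_{G'})=\cc(G')=i$). I would check that $\sim_{G'}\in\eq_\subset[G]$: for a class $C$ of $\sim_{G'}$, the hypergraph $G'_{\mid_\subset C}$ is the connected component $C$ of $G'$, hence connected, and since $E^+(G')\subseteq E^+(G)$ and $V(G'_{\mid_\subset C})=C=V(G_{\mid_\subset C})$, adding edges shows $G_{\mid_\subset C}$ is connected too. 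The same remark — every nontrivial edge of $G'$ lies inside one class of $\sim_{G'}$ — gives $E^+(G')\subseteq E^+(G\mid_\subset\sim_{G'})$, so $G'\in\calN_{G\mid_\subset\sim_{G'}}(i,j)$, the numbers of connected components and of nontrivial edges of $G'$ being intrinsic to $G'$ and unchanged by the ambient hypergraph. Injectivity is immediate. For surjectivity, given $\sim\in\eq_\subset[G]$ with $\cl(\sim)=i$ and $G'\in\calN_{G\mid_\subset\sim}(i,j)$, one first checks $G'\in\calN_G(i,j)$ using $E^+(G\mid_\subset\sim)\subseteq E^+(G)$, and then that the equivalence $\sim_{G'}$ attached to $G'$ equals $\sim$: each edge of $G'$ lies in a $\sim$-class, so (running along paths in $G'$) $\sim_{G'}$ refines $\sim$, and as both partitions have $i$ classes, a pigeonhole count — each nonempty $\sim$-class is a union of at least one $\sim_{G'}$-class — forces them to coincide.

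I expect this last point — the bijection, and within it the verification that $\sim_{G'}$ lies in $\eq_\subset[G]$ and the pigeonhole step establishing surjectivity — to be the only substantial part of the argument; the rest is bookkeeping with (\ref{eq1}) and Lemma \ref{lem2.17}. Once the identity $N_G(i,j)=\sum_{\sim\in\eq_\subset[G],\ \cl(\sim)=i}N_{G\mid_\subset\sim}(i,j)$ is established, summing over $j$ with the signs $(-1)^j$ yields $a_i=\sum_{j\geq 0}(-1)^jN_G(i,j)$, as claimed.
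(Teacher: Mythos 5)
Your proposal is correct and follows essentially the same route as the paper: extract $a_i$ from equation (\ref{eq1}), apply Lemma \ref{lem2.17} using $\cc(G\mid_\subset\sim)=\cl(\sim)$, and conclude via the bijection $\calN_G(i,j)\cong\bigsqcup_{\sim}\calN_{G\mid_\subset\sim}(i,j)$ given by sending $G'$ to itself indexed by the partition into its connected components. The paper simply calls this bijection obvious, whereas you verify it (membership of $\sim_{G'}$ in $\eq_\subset[G]$, injectivity, and the pigeonhole argument for surjectivity), which is a sound elaboration of the same argument.
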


\begin{proof}
 From Proposition \ref{prop2.5},
\begin{align}
\label{eqPsubset}
P_\subset(G)&=\sum_{\sim\in \eq_\subset[G]}\lambda_\subset(G\mid \sim)X^{\cl(\sim)}.
\end{align}
Consequently, combining with Lemma \ref{lem2.17}, for any $i\geq 1$,
\[a_i=\sum_{\substack{\sim \in \eq_\subset[G],\\ \cl(\sim)=i}} \sum_{j\geq 0} (-1)^j N_{G\mid \sim}(i,j).\]
Moreover, there is an obvious bijection
\begin{align*}
\calN_G(i,j)&\longrightarrow\bigsqcup_{\substack{\sim \in \eq_\subset[G],\\ \cl(\sim)=i}}\calN_{G\mid_\subset \sim}(i,j),
\end{align*}
sending $G'$ to itself, seen as an an element of $\calN_{G\mid_\subset \sim}(i,j)$, where $\sim$ is the equivalence which classes are the connected components of $G'$. Consequently,
\[a_i=\sum_{j\geq 0} N_G(i,j). \qedhere\]
\end{proof}

\begin{remark}
If $G$ is a hypergraph with $n$ vertices, then $N_G(n-1,j)=0$ if $j\neq 1$ and $N_G(n-1,1)$ is the 
number of edges of $G$ of cardinality 2. We recover the result of Proposition \ref{prop2.5}.
\end{remark}

\begin{prop}
We define a map $\varpi$ on $\calF[\bfH]$ by the following:
for any hypergraph $G$,
\[\varpi(G)=\sum_{\sim \in \eq_\subset[G]}
\left(\sum_{j\geq 0}(-1)^j N_{G/\sim}(1,j)\right)G\mid_\subset\sim.\]
Then $\varpi$ is the projector on the space $\prim(\calF[\bfH])$ of primitive elements of $\calF[\bfH]$ which vanishes on
$(1)\oplus \ker(\varepsilon)^2$ (eulerian idempotent). 
Consequently, a basis of $\prim(\calF[\bfH])$ is given by
$(\varpi(G))_{\mbox{\scriptsize $G$ connected hypergraph}}$. 
\end{prop}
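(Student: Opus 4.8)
First I would put $\varpi$ into closed form. Let $\nu\colon\calF[\bfH]\to\K$ be the linear form sending a hypergraph $G$ to the coefficient of $X$ in $P_\subset(G)$. By Proposition~\ref{prop2.19} applied to the hypergraph $G/\sim$, one has $\nu(G/\sim)=\sum_{j\geq0}(-1)^jN_{G/\sim}(1,j)$, so comparing with $\delta^{(\subset)}(G)=\sum_{\sim\in\eq_\subset[G]}G/\sim\otimes(G\mid_\subset\sim)$ gives $\varpi=(\nu\otimes\id)\circ\delta^{(\subset)}$. Thus the statement reduces to: $(\nu\otimes\id)\circ\delta^{(\subset)}$ is the eulerian idempotent $e_1=\log^\star(\id)=\sum_{n\geq1}\frac{(-1)^{n-1}}{n}(\id-u\varepsilon)^{\star n}$ of the connected graded bicommutative Hopf algebra $(\calF[\bfH],m,\Delta^{(\subset)})$, where $\star$ is convolution in $\Endo(\calF[\bfH])$ for $(m,\Delta^{(\subset)})$ and $u\varepsilon$ is the convolution unit $x\mapsto\varepsilon(x)\,1$. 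This is the exact analogue, for $e_1$, of the antipode formula $S_\subset=({P_\subset}_{\mid X=-1}\otimes\id)\circ\delta^{(\subset)}$ of Corollary~\ref{cor2.13}.

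The key step will be the following identity, which is where the double-bialgebra compatibility enters. Writing $\star$ also for the convolution of linear forms on $\calF[\bfH]$ dual to $\Delta^{(\subset)}$, I would prove that for every $\phi\in\calF[\bfH]^*$ and every $n\geq1$,
\[\bigl((\phi\otimes\id)\circ\delta^{(\subset)}\bigr)^{\star n}=(\phi^{\star n}\otimes\id)\circ\delta^{(\subset)},\]
by induction on $n$, the inductive step being the equality
\[m\circ\Bigl((\psi\otimes\id)\circ\delta^{(\subset)}\ \otimes\ (\phi\otimes\id)\circ\delta^{(\subset)}\Bigr)\circ\Delta^{(\subset)}=\bigl((\psi\star\phi)\otimes\id\bigr)\circ\delta^{(\subset)},\]
which is precisely $(\Delta^{(\subset)}\otimes\id)\circ\delta^{(\subset)}=m_{1,3,24}\circ(\delta^{(\subset)}\otimes\delta^{(\subset)})\circ\Delta^{(\subset)}$ read off after applying $\psi\otimes\phi$ to the first two legs. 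Next I would use that $(\epsilon_\delta\otimes\id)\circ\delta^{(\subset)}=\id$ (counit axiom of the coaction $\delta^{(\subset)}$, Theorem~\ref{theo1.4}) and $(\varepsilon\otimes\id)\circ\delta^{(\subset)}=u\varepsilon$ (every summand $G/\sim\otimes(G\mid_\subset\sim)$ with $G\neq1$ has nonempty vertex set in its left factor), so that $\id-u\varepsilon=\bigl((\epsilon_\delta-\varepsilon)\otimes\id\bigr)\circ\delta^{(\subset)}$; summing the identity above with the coefficients of $\log$ then gives
\[e_1=\Bigl(\log^\star(\epsilon_\delta)\otimes\id\Bigr)\circ\delta^{(\subset)},\qquad \log^\star(\epsilon_\delta):=\sum_{n\geq1}\frac{(-1)^{n-1}}{n}(\epsilon_\delta-\varepsilon)^{\star n},\]
which is a well-defined element of $\calF[\bfH]^*$ since $(\epsilon_\delta-\varepsilon)^{\star n}$ vanishes on hypergraphs with fewer than $n$ vertices.

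Then I would identify $\log^\star(\epsilon_\delta)$ with $\nu$. Since $P_\subset$ is a coalgebra morphism, $\phi\mapsto\phi\circ P_\subset$ is a filtered morphism of convolution algebras $(\K[X]^*,\star)\to(\calF[\bfH]^*,\star)$ sending unit to unit, hence commuting with $\log^\star$; and $\epsilon_\delta=\mathrm{ev}_1\circ P_\subset$ because $P_\subset(G)(1)=\epsilon_\delta(G)$ (the character attached to the polynomial invariant $P_\subset$). So $\log^\star(\epsilon_\delta)=\log^\star(\mathrm{ev}_1)\circ P_\subset$. In $\K[X]^*$ a check on the basis $X^n$ gives $\mathrm{ev}_a=\exp^\star(a\,\mathfrak g_1)$ with $\mathfrak g_1\colon f\mapsto[X^1]f$ (indeed $\mathfrak g_1^{\star n}(X^m)=n!\,[m=n]$), whence $\log^\star(\mathrm{ev}_1)=\mathfrak g_1$ and therefore $\log^\star(\epsilon_\delta)=\mathfrak g_1\circ P_\subset=\nu$. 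Combining with the previous paragraph, $\varpi=(\nu\otimes\id)\circ\delta^{(\subset)}=e_1$, which for a connected graded bicommutative Hopf algebra over $\K$ is exactly the projector onto $\prim(\calF[\bfH])$ vanishing on $\K1\oplus\ker(\varepsilon)^2$.

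Finally, for the basis assertion: $e_1$ is an $\varepsilon$-derivation of the commutative algebra $\calF[\bfH]$, so $\varpi$ vanishes on products of nonempty hypergraphs; hence $\varpi(G)=0$ unless $G$ is connected and $\prim(\calF[\bfH])=\im\varpi=\vect\bigl(\varpi(G)\mid G\text{ connected}\bigr)$. For $G$ connected, the summand of $\delta^{(\subset)}(G)$ indexed by the equivalence with a single class contributes $\nu(\tun)\,G=G$, while every other summand $G\mid_\subset\sim$ is disconnected (its number of connected components is $\cl(\sim)\geq2$); so in the basis of isoclasses of hypergraphs $\varpi(G)=G+(\text{disconnected terms})$, which forces $(\varpi(G))_{G\text{ connected}}$ to be linearly independent, hence a basis. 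I expect the only non-routine point to be the inductive identity of the second paragraph: it is what turns $\Delta^{(\subset)}$-convolution powers of $(\phi\otimes\id)\circ\delta^{(\subset)}$ back into $\delta^{(\subset)}$ applied to a convolution power of $\phi$, thereby transporting the $\log^\star$ defining $e_1$ through $\delta^{(\subset)}$; alternatively this whole middle computation is the abstract eulerian-idempotent formula for connected double bialgebras and may be cited from \cite{Foissy40}, after which only Proposition~\ref{prop2.19} is needed.
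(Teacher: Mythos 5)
Your proposal is correct and takes essentially the same route as the paper: the paper's proof simply cites \cite[Proposition 4.1]{Foissy40} for the identification of the infinitesimal character $\ln(\epsilon_\delta)$ with the coefficient of $X$ in $P_\subset$ (made explicit by Proposition \ref{prop2.19}) and \cite[Corollary 4.5]{Foissy40} for the fact that $(\ln(\epsilon_\delta)\otimes \id)\circ \delta^{(\subset)}$ is the eulerian idempotent together with the basis consequence, which are exactly the statements you re-derive by hand via the cointeraction identity and the transport through $P_\subset$. So nothing is missing; your middle computation is an unpacked proof of the cited general results, as you yourself observe at the end.
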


\begin{proof}
By \cite[Proposition 4.1]{Foissy40}, the infinitesimal character $\ln(\epsilon_\delta)$ is given on any hypergraph $G$ by
\[\ln(\epsilon_\delta)(G)=\sum_{j\geq 0} (-1)^j N_G(1,j).\]
We conclude with \cite[Corollary 4.5]{Foissy40}.
\end{proof}

\subsection{Morphisms to quasishuffle algebras}

We assume in this paragraph that $(V,\cdot,\delta_V)$ is a  nonunitary,  commutative and cocommutative bialgebra.
By \cite[Proposition 3.9]{Foissy41}, $\calF_V[\bfH]$ is a bialgebra over $V$, with the coaction $\rho$ described as follows: 
if $G$ is a $V$-decorated hypergraph with $n$ vertices, we arbitrarily index these vertices and we denote by $G(v_1,\ldots,v_n)$ 
the  hypergraph with for any $i$, the $i$-th vertex of $G$ decorated by $v_i$. Then
\[\rho(G(v_1,\ldots,v_n))=G(v_1',\ldots,v_n')\otimes v_1'' \cdot\ldots \cdot v_n''.\]

\begin{notation}
The map $\pi_V:T(V)\longrightarrow \K[X]$ is defined by
\begin{align*}
&\forall v_1,\ldots,v_n \in V,&\pi_V(v_1\ldots v_n)&=\epsilon_V(v_1)\ldots \epsilon_V(v_n)\dfrac{X(X-1)\ldots (X-n+1)}{n!}.
\end{align*}
It is a double bialgebra morphism. 
\end{notation}

By \cite[Theorem 2.7]{Foissy42}:

\begin{prop}\label{prop2.20}
Let $(V,\cdot,\delta_V)$ be  a commutative, not necessarily unitary bialgebra. 
\begin{enumerate}
\item For any hypergraph $G$, we denote by $\mathrm{VC}_\cap(G)$ the set of surjective maps $f:V(G)\longrightarrow [k]$
such that if $x$ and $y$ are two distinct elements of an edge $e\in E(G)$, then $f(x)\neq f(y)$. 
The unique double bialgebra morphism over $V$ from $(\calF_V[\bfH],m,\Delta^{(\cap,\cap)},\delta^{(\cap)})$
to $(T(V),\squplus,\Delta,\delta)$ sends any $V$ decorated hypergraph $G$ to 
\[\Phi_\cap(G)=\sum_{f\in \mathrm{VC}_\cap[G]} \left(\prod_{f(i)=1}^\cdot v_i\right)
\ldots  \left(\prod_{f(i)=\max(f)}^\cdot v_i\right).\]
Moreover, $P_\cap\circ \Theta_V=\pi_V\circ\Phi_\cap$.
\item For any hypergraph $G$, we denote by $\mathrm{VC}_\subset(G)$ the set of surjective maps $f:V(G)\longrightarrow [k]$
such that if $e$ is a nontrivial edge of $G$, then $f$ takes at least two different values on $e$. 
The unique double bialgebra morphism over $V$ from $(\calF_V[\bfH],m,\Delta^{(\subset,\subset)},\delta^{(\subset)})$
to $(T(V),\squplus,\Delta,\delta)$ sends any $V$ decorated hypergraph $G$ to 
\[\Phi_\subset(G)=\sum_{f\in \mathrm{VC}_\subset[G]} \left(\prod_{f(i)=1}^\cdot v_i\right)
\ldots  \left(\prod_{f(i)=\max(f)}^\cdot v_i\right).\]
Moreover, $P_\subset\circ \Theta_V=\pi_V\circ\Phi_\subset$.
\end{enumerate}
\end{prop}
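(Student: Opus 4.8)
The plan is to derive both parts from \cite[Theorem 2.7]{Foissy42}. We apply it to the double bialgebra over $V$ formed by $(\calF_V[\bfH],m,\Delta^{(\leftthreetimes)},\delta^{(\leftthreetimes)})$ equipped with the $V$-coaction $\rho$ recalled above; it is indeed a double bialgebra over $V$, for each $\leftthreetimes\in\{\subset,\cap\}$, by \cite[Proposition 3.9]{Foissy41}, and it is connected, graded by the number of vertices. By \cite[Theorem 2.7]{Foissy42} there is then a unique morphism $\Phi_\leftthreetimes$ of double bialgebras over $V$ from $\calF_V[\bfH]$ to $(T(V),\squplus,\Delta,\delta)$, and it is the unique morphism of coalgebras from $(\calF_V[\bfH],\Delta^{(\leftthreetimes)})$ to $(T(V),\Delta)$ (the latter with the deconcatenation coproduct) whose length-one component is the linear map $\lambda=(\epsilon_\delta\otimes\id)\circ\rho:\calF_V[\bfH]\longrightarrow V$. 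Since $\epsilon_\delta$ does not depend on $\leftthreetimes$, neither does $\lambda$: the morphisms $\Phi_\cap$ and $\Phi_\subset$ will differ only through the coproducts $\Delta^{(\cap)}$ and $\Delta^{(\subset)}$.

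I would then unwind this description. From the tensor-coalgebra formula, for any nonempty $V$-decorated hypergraph $G$,
\[\Phi_\leftthreetimes(G)=\sum_{k\geq 1}\lambda^{\otimes k}\left(\big(\tdelta^{(\leftthreetimes)}\big)^{(k-1)}(G)\right),\]
the $k$-th term lying in the component $V^{\otimes k}$ of words of length $k$ in $T(V)$. Two elementary computations then finish the job. First, for a $V$-decorated hypergraph $H$ with vertices decorated by $w_1,\dots,w_m$, the definitions of $\rho$ and of $\epsilon_\delta$ (from the proof of Theorem~\ref{theo1.4}) together with the counit axiom for $\delta_V$ give $\lambda(H)=w_1\cdots w_m$ (product taken in $V$) when $E^+(H)=\emptyset$, and $\lambda(H)=0$ otherwise. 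Second, iterating the reduced coproduct with the help of Lemma~\ref{lemmecombi} gives $\big(\tdelta^{(\leftthreetimes)}\big)^{(k-1)}(G)=\sum G_{\mid_\leftthreetimes I_1}\otimes\cdots\otimes G_{\mid_\leftthreetimes I_k}$, the sum running over ordered set partitions $V(G)=I_1\sqcup\cdots\sqcup I_k$ into nonempty blocks. Substituting, the nonzero terms are exactly those with $E^+(G_{\mid_\leftthreetimes I_j})=\emptyset$ for all $j$; writing $f(i)=j$ when $i\in I_j$, this says precisely $f\in\mathrm{VC}_\leftthreetimes[G]$ (for $\leftthreetimes=\cap$: no edge of $G$ meets any $I_j$ in two vertices; for $\leftthreetimes=\subset$: no nontrivial edge of $G$ is contained in any $I_j$), and the corresponding word is $\big(\prod_{f(i)=1}^{\cdot}v_i\big)\cdots\big(\prod_{f(i)=\max f}^{\cdot}v_i\big)$. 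This is the announced formula in both cases.

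For the last assertion $P_\leftthreetimes\circ\Theta_V=\pi_V\circ\Phi_\leftthreetimes$ I would invoke the universal property of $\K[X]$: as $\calF_V[\bfH]$ is a connected graded double bialgebra, \cite[Theorem 3.9]{Foissy40} provides a \emph{unique} morphism of double bialgebras from $(\calF_V[\bfH],m,\Delta^{(\leftthreetimes)},\delta^{(\leftthreetimes)})$ to $(\K[X],m,\Delta,\delta)$. Both $P_\leftthreetimes\circ\Theta_V$ — the composite of the double bialgebra morphisms $\Theta_V$ (the last Proposition of Section~1) and $P_\leftthreetimes$ — and $\pi_V\circ\Phi_\leftthreetimes$ — the composite of $\Phi_\leftthreetimes$ (a morphism of double bialgebras once the $V$-comodule structure is forgotten) and $\pi_V$ — are such morphisms, hence they coincide. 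The calculations above are routine; the one step calling for real care is the precise matching of conventions with \cite[Theorem 2.7]{Foissy42}, namely that the universal morphism into $T(V)$ is the coalgebra morphism for deconcatenation whose length-one component is $(\epsilon_\delta\otimes\id)\circ\rho$ — once that is granted, both parts of the proposition follow from the bookkeeping sketched here.
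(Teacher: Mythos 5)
Your proposal is correct and takes essentially the same route as the paper: the explicit formula for $\Phi_\leftthreetimes$ is exactly what \cite[Theorem 2.7]{Foissy42} provides (you merely unwind the iterated reduced coproduct and the map $(\epsilon_\delta\otimes \id)\circ \rho$, a computation the paper leaves implicit in the citation), and the identity $P_\leftthreetimes\circ \Theta_V=\pi_V\circ \Phi_\leftthreetimes$ is obtained, as in the paper, by noting that both composites are double bialgebra morphisms to $(\K[X],m,\Delta,\delta)$ and invoking the uniqueness of such a morphism.
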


\begin{proof}
Let $\leftthreetimes\in \{\cap,\subset\}$. 
Both maps $P_\leftthreetimes\circ \Theta_V$ and $\pi_V\circ\Phi_\leftthreetimes$ are double bialgebra morphisms from
$(\calF_V[\bfH],m,\Delta^{(\leftthreetimes)},\delta^{(\leftthreetimes)})$ to $(\K[X],m,\Delta,\delta)$.
By unicity of such a morphism, they are equal. 
\end{proof}

Even without the double bialgebra structure, we can define a Hopf algebra morphism for $\Delta^{(\cap,\subset)}$,
with \cite[Theorem 2.3]{Foissy42}:

\begin{prop}\label{prop2.21}
Let $(V,\cdot)$ be  a commutative, not necessarily unitary algebra. 
 For any hypergraph $G$, we denote by $\mathrm{VC}_{\cap,\subset}(G)$ the set of surjective maps $f:V(G)\longrightarrow [k]$
such that if $e$ is a nontrivial edge of $G$, then $\max\{f(x)\mid x\in e\}$ is obtained in exactly one element of $e$.
The following defines a Hopf algebra morphism  from $(\calF_V[\bfH],m,\Delta^{(\cap,\subset)})$
to $(T(V),\squplus,\Delta)$: for any $V$-decorated  hypergraph $G$,
\[\Phi_{\cap,\subset}(G)=\sum_{f\in \mathrm{VC}_{\cap,\subset}[G]} \left(\prod_{f(i)=1}^\cdot v_i\right)
\ldots  \left(\prod_{f(i)=\max(f)}^\cdot v_i\right).\]
Moreover, $P_{\cap,\subset}\circ \Theta_V=\pi_V\circ\Phi_{\cap,\subset}$.
\end{prop}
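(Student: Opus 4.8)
The statement bundles two assertions: that $\Phi_{\cap,\subset}$ is a morphism of Hopf algebras, and the identity $P_{\cap,\subset}\circ\Theta_V=\pi_V\circ\Phi_{\cap,\subset}$. For the first, the quickest route is to feed the relevant algebra datum into \cite[Theorem 2.3]{Foissy42}, which produces a Hopf algebra morphism from $(\calF_V[\bfH],m,\Delta^{(\cap,\subset)})$ to $(T(V),\squplus,\Delta)$ together with an explicit expansion of it as a sum of iterated coproducts; the remaining task is then combinatorial, namely to recognise that expansion as the colouring sum defining $\Phi_{\cap,\subset}$. I would instead verify directly that $\Phi_{\cap,\subset}$ is a \emph{bialgebra} morphism: since both $\calF_V[\bfH]$ and $T(V)$ are graded and connected, this upgrades automatically to a Hopf algebra morphism (with antipode given by Takeuchi's formula \cite{Takeuchi1971}). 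So the plan is (i) check multiplicativity, (ii) check comultiplicativity, (iii) deduce the Hopf statement, (iv) prove the ``Moreover''.

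For (i), write $G=G_1G_2$ with $V(G_1),V(G_2)$ disjoint. A surjection $f:V(G)\to[k]$ restricts to $f_{\mid V(G_1)}$ and $f_{\mid V(G_2)}$, which after the order-preserving relabelling of their images become surjections $\overline{f_1}:V(G_1)\to[k_1]$ and $\overline{f_2}:V(G_2)\to[k_2]$. Since $E^+(G_1G_2)=E^+(G_1)\sqcup E^+(G_2)$ and the condition ``the maximum of $f$ on each nontrivial edge is attained exactly once'' is local to each edge, one has $f\in\mathrm{VC}_{\cap,\subset}(G)$ if and only if $\overline{f_1}\in\mathrm{VC}_{\cap,\subset}(G_1)$ and $\overline{f_2}\in\mathrm{VC}_{\cap,\subset}(G_2)$. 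Moreover the ways of interleaving the levels of $\overline{f_1}$ and $\overline{f_2}$ so as to recover $f$ — where two levels may be identified, which fuses the two corresponding letters into their product $\prod^{\cdot}$ — are exactly the terms of the quasi-shuffle $w_{\overline{f_1}}\squplus w_{\overline{f_2}}$ of the associated words. Hence $\Phi_{\cap,\subset}(G_1G_2)=\Phi_{\cap,\subset}(G_1)\squplus\Phi_{\cap,\subset}(G_2)$, and $\Phi_{\cap,\subset}$ sends the empty hypergraph to the empty word.

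Step (ii) is the heart of the matter. Write $\Delta_{T(V)}$ for the deconcatenation coproduct of $T(V)$; the right-hand side $(\Phi_{\cap,\subset}\otimes\Phi_{\cap,\subset})\circ\Delta^{(\cap,\subset)}(G)$ is, after expansion, a sum over $I\subseteq V(G)$ of terms (valid colouring of the induced sub-hypergraph on one part) $\otimes$ (valid colouring of the induced sub-hypergraph on the complementary part), one leg carrying a $\mid_\subset$-restriction and the other a $\mid_\cap$-restriction. I would set up a bijection between the terms of $\Delta_{T(V)}\circ\Phi_{\cap,\subset}(G)$ — pairs $(f,j)$ with $f\in\mathrm{VC}_{\cap,\subset}(G)$ of image $[k]$ and $0\le j\le k$, the word $w_f=u_1\cdots u_k$ (with $u_\ell=\prod_{f(i)=\ell}^{\cdot}v_i$) deconcatenating as $u_1\cdots u_j\otimes u_{j+1}\cdots u_k$ — and the terms above: the map sends $(f,j)$ to the subset $I$ cut out by the relevant colour-range, the restriction of $f$ to $I$, and the restriction of $f$ to $V(G)\setminus I$ shifted down to land on an initial segment; a short sign/order bookkeeping pins down which part is $I$ and which leg gets $\mid_\subset$ versus $\mid_\cap$. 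The one genuinely delicate point — and the \emph{main obstacle} — is that a plain restriction of a valid colouring need not remain valid, so one must check that the two restrictions produced really belong to the relevant $\mathrm{VC}_{\cap,\subset}$. This is exactly why the $\subset$/$\cap$ split is forced: an edge of the $\subset$-leg is an honest edge $e$ of $G$ entirely contained in its vertex set (so $f$ already satisfies the condition on it), while an edge of the $\cap$-leg has the form $e\cap J$, whose $f$-maximum equals the overall $f$-maximum of $e$ precisely when that maximum lies among the colours kept on $J$ (and is then attained once), and $e\cap J$ is empty otherwise. The inverse map glues $g$ and $h$ by re-shifting, and one checks the glued colouring is surjective and valid using that each edge of $G$ either lies entirely in one part or meets the other. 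Matching words through this bijection gives $\Delta_{T(V)}\circ\Phi_{\cap,\subset}=(\Phi_{\cap,\subset}\otimes\Phi_{\cap,\subset})\circ\Delta^{(\cap,\subset)}$.

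Then (iii) is formal. For (iv), the formula for $P_{\cap,\subset}$ proved earlier in this section, after grouping surjections by the cardinality of their image, reads $P_{\cap,\subset}(G)=\sum_{f\in\mathrm{VC}_{\cap,\subset}(G)}H_{\max(f)}(X)$; hence $P_{\cap,\subset}(\Theta_V(G))=\bigl(\prod_{x\in V(G)}\epsilon_V(v_x)\bigr)\sum_{f\in\mathrm{VC}_{\cap,\subset}(G)}H_{\max(f)}(X)$. On the other side, $\pi_V$ sends the word $u_1\cdots u_{\max(f)}$ attached to $f$ to $\epsilon_V(u_1)\cdots\epsilon_V(u_{\max(f)})H_{\max(f)}(X)=\bigl(\prod_{x\in V(G)}\epsilon_V(v_x)\bigr)H_{\max(f)}(X)$, since $\epsilon_V$ is an algebra map and each $v_x$ appears in exactly one letter $u_\ell$. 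Summing over $f$ yields $\pi_V(\Phi_{\cap,\subset}(G))=P_{\cap,\subset}(\Theta_V(G))$, which is the asserted identity.
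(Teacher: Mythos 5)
Your route is genuinely different from the paper's: the paper gives no written argument for Proposition \ref{prop2.21} and simply obtains it from \cite[Theorem 2.3]{Foissy42}, which packages the existence of a Hopf algebra morphism to the quasishuffle bialgebra $(T(V),\squplus,\Delta)$ together with its compatibility with $\pi_V$, leaving only the identification of the resulting expansion with the colouring sum (the same computation as for $P_{\subset,\cap}$ earlier in the section). Your plan --- multiplicativity via the quasi-shuffle of level sets, comultiplicativity via a bijection between deconcatenations of valid colourings and pairs of valid colourings of the two induced sub-hypergraphs, then the ``Moreover'' by evaluating $\pi_V$ on each word --- is self-contained, and steps (i), (iii), (iv) are correct as written. (For (iv), note that you use $\epsilon_V$, so $V$ must carry the bialgebra structure of the surrounding paragraph, not merely the algebra structure stated in the proposition; that imprecision is the paper's, not yours.)

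The point you defer to ``a short sign/order bookkeeping'' in step (ii) is, however, exactly where the content lies, and it does not come out the way the statement is literally phrased. With the letters of $w_f$ written in increasing colour order, a deconcatenation at position $j$ cuts $V(G)$ into the low-colour part $J=f^{-1}([j])$ and the high-colour part $I=V(G)\setminus J$; the restriction of $f$ to $J$ is valid for $G_{\mid_\subset J}$ (an edge entirely inside $J$ keeps its unique maximum), while the restriction to $I$ is valid for $G_{\mid_\cap I}$ (the maximum of $f$ on an edge meeting $I$ is automatically attained in $I$, exactly once). Hence the prefix leg is forced to be the $\subset$-leg and the suffix leg the $\cap$-leg, so your bijection proves $\Delta_{T(V)}\circ\Phi_{\cap,\subset}=(\Phi_{\cap,\subset}\otimes\Phi_{\cap,\subset})\circ\Delta^{(\subset,\cap)}$, i.e.\ compatibility with $\Delta^{(\subset,\cap)}$, in agreement with the earlier proposition stating that $P_{\subset,\cap}$ is a Hopf algebra morphism on $(\calF[\bfH],m,\Delta^{(\subset,\cap)})$. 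With $\Delta^{(\cap,\subset)}$ taken literally the identity fails: for $G=T_3$ with vertices $x,y,z$, the term $(v_x\cdot v_y)\otimes v_z$ occurs in $\Delta_{T(V)}\circ\Phi_{\cap,\subset}(T_3)$ (from the colouring $x,y\mapsto 1$, $z\mapsto 2$), but not in $(\Phi_{\cap,\subset}\otimes\Phi_{\cap,\subset})\circ\Delta^{(\cap,\subset)}(T_3)$, since $(T_3)_{\mid_\cap\{x,y\}}$ has the nontrivial edge $\{x,y\}$, on which the constant colouring is forbidden. So you must either prove the statement for $\Delta^{(\subset,\cap)}$ (the two coproducts are coopposite, and the paper is loose about this order elsewhere) or reverse the order of the letters in $\Phi_{\cap,\subset}$; leaving the choice to ``bookkeeping'' hides the only delicate point. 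Apart from making this explicit --- and writing out the inverse gluing map, including the case where an edge meets the high part in a single vertex --- your argument goes through.
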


\section{Multi-complexes}

\subsection{Definition}

Recall that a multiset is a map 
$X:S\longrightarrow \N\setminus \{0\}$, where $S$ is a set, called the support of $X$ and denoted by $\supp(X)$.
For any $x\in\supp(X)$, $X(x)$ is the multiplicity of $x$ in $X$.  
Multisets are usually seen as "sets with repetitions of elements": for example, the multiset 
\[X:\left\{\begin{array}{rcl}
\{a,b,c\}&\longrightarrow&\N\setminus\{0\}\\
a&\longmapsto&1\\
b&\longmapsto&3\\
c&\longmapsto&2
\end{array}\right.\]
is represented by $\{a,b,b,b,c,c\}$. If $X$ and $Y$ are two multisets, $X\subseteq Y$ if
$\supp(X)\subseteq \supp(Y)$ and for any $x\in \supp(X)$, $X(x)\leq Y(x)$. 
For example, $\{a,a,b,b,b,c\}\subseteq \{a,a,a,b,b,b,c,c,c,d\}$. \\

The notion of multi-complexes is introduced in \cite{Iovanov2022}. Let us give a slightly modified definition, adapted to our setting:

\begin{defi}
A multi-complex is a triple $C=(V(C),E(C),\leq_C)$, where:
\begin{itemize}
\item $V(C)$ is a finite set, called the set of vertices of $C$.
\item $E(C)$ is a multiset of multisets, such that:
\begin{itemize}
\item For any $e\in E(C)$, the support $\supp(e)$ of $e$ is a subset of $V(C)$.
\item For any $x\in V(C)$, $\{x\}$ is an element of the multiset $E(C)$ of multiplicity $1$.
\item $\emptyset$ is an element of the multiset $E(C)$ of multiplicity 1.
\end{itemize}
The elements of $E(C)$ are called the edges of $C$. 
\item $\leq_C$ is a partial order on the multiset $E(C)$ such that:
\begin{itemize}
\item For any $x\in E(C)$, for any $e\in E(C)$, $\{x\}\leq_C e$ if, and only if, $x\in \supp(e)$. 
\item For any $e\in E(C)$, $\emptyset \leq_C e$.
\item For any $e,f\in E(C)$, if $e\leq_C f$, then $e\subset f$. 
\end{itemize}
\end{itemize}
For any finite set $X$, the set of multi-complexes $C$ with $V(C)=X$ is denoted by $\calMC[X]$,
and the vector space generated by $\calMC[X]$ is denoted by $\bfMC[X]$. 
Then $\calMC$ is a set species and $\bfMC$ is a species.
\end{defi}

\begin{example}\label{excomplexe} Here is a multicomplex $C$. 
We put $V(C)=\{a,b,c,d\}$, and
\[E(C)=\left\{\begin{array}{c}
\emptyset,\{a\},\{b\},\{c\},\{d\},\\
\{a,b\}, \{a,c\},\{a,c\},\{b,d\},\{c,d\},\{a,b,c\},\{a,a,c\},\{b,b,d\}
\end{array}\right\},\]
with the partial order given by its Hasse graph:
\[\xymatrix{
&\{a,b,c\}&\{a,a,c\}&\{b,b,d\}\\
\{a,b\}\ar@{-}[ru]&\{a,c\}\ar@{-}[u]&\{a,c\}\ar@{-}[u]&\{b,d\}\ar@{-}[u]&\{c,d\}\\
\{a\}\ar@{-}[u]\ar@{-}[ru]\ar@{-}[rru]
&\{b\}\ar@{-}[lu]\ar@{-}[rru]&
\{c\}\ar@{-}[lu]\ar@{-}[u]\ar@{-}[rru]&\{d\ar@{-}[u]\ar@{-}[ru]\}\\
&&\emptyset \ar@{-}[llu] \ar@{-}[lu] \ar@{-}[u] \ar@{-}[ru]}\]
\end{example}

Hypergraphs are multi-complexes, with $\leq$ given by the inclusion; note that in this case, the edges of $C$ are sets, and the multiset of edges is also a set. 
Simplicial complexes and $\Delta$-complexes are also multi-complexes, see \cite{Iovanov2022}.

\subsection{Hopf algebraic structures multi-complexes}

A Hopf algebra of multi-complexes is introduced in \cite{Iovanov2022}. Let us lift this to the twisted level. 
Let $X$ and $Y$ be two disjoint sets. If $C\in \calMC[X]$ and $D\in \calMC[Y]$, the multi-complex $CD\in \calMC[X\sqcup Y]$
is defined by
\begin{align*}
V(CD)&=X\sqcup Y,&E(CD)&=E(C)\sqcup E(Y),
\end{align*}
and for any $e,f\in E(CD)$, 
\[e\leq_{CD}f\mbox{ if ($e,f\in E(C)$ and $e\leq_C f$) or ($e,f\in E(D)$ and $e\leq_D f$)}.\]
This defines a associative, commutative product $m$ on $\bfMC$, which unit is the empty multi-complex.\\

For any finite sets $X\subset Y$ and for any multi-complex $C\in \calMC[Y]$, we define $C_{\mid X}$ by
\begin{align*}
V(C_{\mid X})&=X,&E(C_{\mid X})&=\{e\in E(C)\mid \supp(C)\subset X\},&
\leq_{C_{\mid X}}&=(\leq_C)_{\mid E(C_{\mid X})}.
\end{align*}
This is indeed a multi-complex. We then define a coproduct $\Delta$ on $\bfMC$ by the following:
for any finite sets $X$ and $Y$, for any $C\in \calMC[X\sqcup Y]$,
\[\Delta_{X,Y}(X)=C_{\mid X}\otimes C_{\mid Y}.\]

\begin{prop}
$(\bfMC,m,\Delta)$ is a twisted bialgebra. Moreover, $\calF[\bfMC]$ is the Hopf algebra of multi-complexes of \cite{Iovanov2022}.
\end{prop}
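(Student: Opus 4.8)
The plan is to verify the bialgebra axioms for $(\bfMC,m,\Delta)$ directly, following exactly the same pattern as the proof of Proposition \ref{prop1.2} for hypergraphs, since the restriction map $C\mapsto C_{\mid X}$ on multi-complexes behaves just like $G\mapsto G_{\mid_\subset X}$ on hypergraphs. First I would record the analogue of Lemma \ref{lemmecombi}: for $X\subseteq Y\subseteq Z$ and $C\in\calMC[Z]$ one has $(C_{\mid Y})_{\mid X}=C_{\mid X}$, because an edge $e$ satisfies $\supp(e)\subseteq X$ iff it satisfies both $\supp(e)\subseteq X$ and $\supp(e)\subseteq Y$, and the induced partial order restricts transitively. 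This immediately gives coassociativity of $\Delta$: for $C\in\calMC[I\sqcup J\sqcup K]$ both $(\Delta_{I,J}\otimes\id)\circ\Delta_{I\sqcup J,K}(C)$ and $(\id\otimes\Delta_{J,K})\circ\Delta_{I,J\sqcup K}(C)$ equal $C_{\mid I}\otimes C_{\mid J}\otimes C_{\mid K}$. The counit is $\varepsilon(C)=1$ if $V(C)=\emptyset$ and $0$ otherwise; the counit axioms are trivial since $C_{\mid\emptyset}$ is the empty multi-complex and $C_{\mid V(C)}=C$.

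Next I would check that $\Delta$ is an algebra morphism with respect to the Cauchy tensor product, again mirroring Proposition \ref{prop1.2}. The key point is that when $C\in\calMC[I']$ and $D\in\calMC[J']$ with $I'\sqcup J'=I\sqcup J$, every edge of $CD$ lies entirely in $I'$ or entirely in $J'$, and the partial order $\leq_{CD}$ has no relations between edges of $C$ and edges of $D$; hence $(CD)_{\mid I}=C_{\mid I\cap I'}\,D_{\mid I\cap J'}$ and similarly for $J$, which is exactly the statement that $\Delta_{I,J}\circ m_{I',J'}$ factors through $(m\otimes m)\circ(\id\otimes c\otimes\id)\circ(\Delta\otimes\Delta)$, with $c$ the symmetry of the species category. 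That $m$ is associative, commutative, and unital is immediate from the definitions of $V(CD)$, $E(CD)$, $\leq_{CD}$, and the fact that disjoint union of sets and multisets is associative and commutative.

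For the second sentence, that $\calF[\bfMC]$ recovers the Hopf algebra of \cite{Iovanov2022}, I would apply the bosonic Fock functor $\calF$ to $(\bfMC,m,\Delta)$: by the general theory of \cite{Aguiar2010} this yields a graded connected bialgebra (graded by $|V(C)|$), hence a Hopf algebra, whose underlying vector space has basis the isoclasses of multi-complexes, whose product is disjoint union, and whose coproduct is $\Delta(C)=\sum_{X\subseteq V(C)}C_{\mid X}\otimes C_{\mid V(C)\setminus X}$. One then simply matches this description with the definition in \cite{Iovanov2022} — the only point requiring a remark is that our notion of induced sub-multi-complex (keep the edges whose support is contained in $X$, with the restricted order) coincides with theirs, which is exactly how \cite{Iovanov2022} defines restriction.

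I expect the routine verifications to present no real obstacle; the one place to be slightly careful is the bookkeeping of the partial order $\leq_C$ under restriction and under disjoint union, namely confirming that $(\leq_C)_{\mid E(C_{\mid X})}$ is again a legitimate multi-complex order (the three bulleted conditions in the definition of multi-complex) and that no order relations are created or destroyed inappropriately; but each of these follows directly because restriction only deletes edges and never alters comparabilities among the surviving ones, and disjoint union places the two order ideals side by side with no cross-relations. Thus the proof is essentially a transcription of the hypergraph argument with $G_{\mid_\subset I}$ replaced by $C_{\mid I}$, together with the extra (trivial) check that the order datum transports correctly.
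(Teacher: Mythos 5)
Your proposal is correct and follows exactly the route the paper intends: the paper's proof is literally ``Similar to the proof of Proposition \ref{prop1.2}'', i.e.\ a transcription of the hypergraph argument for $\Delta^{(\subset,\subset)}$ with $G_{\mid_\subset I}$ replaced by $C_{\mid I}$, which is what you carry out, including the only genuinely new point (that the order datum $\leq_C$ restricts and disjoint-unions correctly) and the identification of $\calF[\bfMC]$ with the Hopf algebra of \cite{Iovanov2022} via the bosonic Fock functor.
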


\begin{proof}
Similar to the proof of Proposition \ref{prop1.2}.
\end{proof}

Let us now define an extraction-contraction coproduct on $\bfMC$. \\

Let $C$ be a multi-complex, and let $x,y\in V(C)$. A path from $x$ to $y$ is a sequence $(x_0,\ldots,x_k)$ of vertices of $C$
such that:
\begin{itemize}
\item $x_0=x$ and $x_k=y$.
\item For any $i\in [k]$, there exists $e\in E(C)$ such that $x_{i-1},x_i \in e$.
\end{itemize}
We shall say that $C$ is connected if for any $x,y\in V(C)$, there exists a path from $x$ to $y$ in $C$. \\

Let $X$ be a finite set, $\sim\in \eq[X]$ and $C\in \calMC[X]$. 
\begin{enumerate}
\item We shall say that $\sim \in \eq_c[C]$ if for any 
$\varpi\in X/\sim$, $C_{\mid \varpi}$ is connected.
\item We denote by $X\mid \sim$ the multi-complex defined by
\begin{align*}
V(C\mid \sim)&=V(C),\\
E(C\mid \sim)&=\{e\in E(C)\mid \forall x,y\in \supp(e),\: x\sim y\},\\
\leq_{C\mid \sim}&=(\leq_C)_{\mid E(C\mid \sim)}.
\end{align*}
In other words,
\[C\mid \sim=\prod_{\varpi \in X/\sim} C_{\mid \varpi}.\]
\item We denote by $X/\sim$ the multi-complex defined by
\begin{align*}
V(C/\sim)&=V(V)/\sim,\\
E(C/\sim)&=\{\pi(e)\mid e\in E(C)\},
\end{align*}
where $\pi_\sim:V(C)\longrightarrow V(C)/\sim$ is the canonical surjection. It is noticeable
that $E(C/\sim)$ is a multiset, that is to say we distinguish all the $\pi(e)$, $e\in E(C)$, in $E(C/\sim)$,
except for the trivial edges (which are $\empty$ and the singletons, which remains of multiplicity 1). 
In other terms, if $\overline{e}$ is a multiset of support included in $V(C)/\sim$, its multiplicity in $E(C/\sim)$
is the sum of the multiplicities of the edges $e \in E(C/\sim)$ such that $\pi(e)=\overline{e}$. 
The partial order on $E(C/\sim)$ is defined by 
\[\pi_\sim(e)\leq_{C/\sim} \pi_\sim(f)\Longleftrightarrow e\leq_C f.\]
\end{enumerate}

\begin{example}
Let us consider the multi-complex $C$ of Example \ref{excomplexe} again. 
Let $\sim$ be the equivalence which classes are $\{a,b\}$ and $\{c,d\}$. Because its classes are edges of $C$,
$\sim\in \eq_c[C]$. Moreover, 
$V(C\mid \sim)=\{a,b,c,d\}$, and
\[E(C\mid \sim)=\left\{\begin{array}{c}
\emptyset,\{a\},\{b\},\{c\},\{d\},\\
\{a,b\}, \{b,d\}\end{array}\right\},\]
with the partial order given by its Hasse graph:
\[\xymatrix{
\{a,b\}&&&\{c,d\}\\
\{a\}\ar@{-}[u]&\{b\}\ar@{-}[lu]&
\{c\}\ar@{-}[ru]&\{d\}\ar@{-}[u]\\
&&\emptyset \ar@{-}[llu] \ar@{-}[lu] \ar@{-}[u] \ar@{-}[ru]
}\]
Moreover, $V(C/\sim)=\{\overline{a},\overline{c}\}$, and
\[E(C)=\left\{\begin{array}{c}
\emptyset,\{\overline{a}\},\{\overline{c}\},\\
\{\overline{a},\overline{a}\}, \{\overline{a},\overline{c}\},\{\overline{a},\overline{c}\},\{\overline{a},\overline{c}\},
\{\overline{c},\overline{c}\},\{\overline{a},\overline{a},\overline{c}\},\{\overline{a},\overline{a},\overline{c}\},
\{\overline{a},\overline{a},\overline{c}\}
\end{array}\right\},\]
with the partial order given by its Hasse graph:
\[\xymatrix{
&\{\overline{a},\overline{a},\overline{c}\}&\{\overline{a},\overline{a},\overline{c}\}&\{\overline{a},\overline{a},\overline{c}\}\\
\{\overline{a},\overline{a}\}\ar@{-}[ru]&\{\overline{a},\overline{c}\}\ar@{-}[u]&\{\overline{a},\overline{c}\}\ar@{-}[u]&\{\overline{a},\overline{c}\}\ar@{-}[u]&\{\overline{c},\overline{c}\}\\
&\{\overline{a}\}\ar@{-}[lu]\ar@{-}[u]\ar@{-}[ru]\ar@{-}[rru]&
&\{\overline{c}\ar@{-}[u]\ar@{-}[ru]\ar@{-}[llu]\ar@{-}[lu]\}\\
&&\emptyset \ar@{-}[lu] \ar@{-}[ru]}\]
\end{example}

\begin{theo}
For any multi-complex $C\in \calMC[X]$ and for any $\sim\in \eq[X]$, we put
\[\delta_\sim(C)=\begin{cases}
C/\sim\otimes \:C\mid\sim\mbox{ if }\sim\in \eq_c[C],\\
0\mbox{ otherwise}.
\end{cases}\]
This defines a contraction-extraction coproduct on $\bfMC$ in  the sense of \cite{Foissy41}, 
compatible with $m$ and $\Delta$. 
\end{theo}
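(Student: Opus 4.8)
The plan is to mirror the proof of Theorem~\ref{theo1.4} step by step: isolate the combinatorial content in an analogue of Lemma~\ref{lem1.5}, then check the defining axioms of a contraction-extraction coproduct in the sense of \cite{Foissy41} together with the compatibility with $\Delta$. The one genuinely new feature, compared with the hypergraph case, is that an edge of a multi-complex is a \emph{multiset}, that $E(C)$ is itself a multiset, and that a multi-complex carries a partial order on its edges; so every identity between multi-complexes below must be read as an equality of vertex sets, of multisets of edges (counted with multiplicities), and of partial orders. What keeps this tractable is that paths in $C$, hence connectedness, the set $\eq_c[C]$, and the classes of any $\sim$, depend only on the \emph{supports} of the edges of $C$. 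Consequently all the connectivity statements below are literally those of the $\leftthreetimes=\subset$ case of Lemma~\ref{lem1.5}, and only the bookkeeping for multiplicities and orders requires a separate, routine verification.

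Concretely, I would first establish for $C\in\calMC[X]$ and $\sim\leqslant\sim'\in\eq[X]$ the four statements of Lemma~\ref{lem1.5}: $(1)$ $(C/\sim')/\overline{\sim}=C/\sim$; $(2)$ $\sim\in\eq_c[C]$ iff the connected components of $C\mid\sim$ are the classes of $\sim$; $(3)$ if $\sim\in\eq_c[C]$, the connected components of $C/\sim$ are the $\pi_\sim$-images of those of $C$; and $(4)$ $\sim'\in\eq_c[C]$ and $\overline{\sim}\in\eq_c[C/\sim']$ if and only if $\sim\in\eq_c[C]$ and $\sim'\in\eq_c[C\mid\sim]$, in which case $(C/\sim')\mid\overline{\sim}=(C\mid\sim)/\sim'$. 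For $(1)$ the vertex sets agree since $(X/\sim')/\overline{\sim}=X/\sim$, while $\pi_{\overline{\sim}}\circ\pi_{\sim'}=\pi_\sim$ shows that any multiset of vertices has the same multiplicity in $E((C/\sim')/\overline{\sim})$ and in $E(C/\sim)$ --- namely the total multiplicity in $E(C)$ of the edges $e$ mapping onto it --- and that the orders agree because $\pi_{\overline{\sim}}(\pi_{\sim'}(e))\leq\pi_{\overline{\sim}}(\pi_{\sim'}(f))\iff e\leq_C f\iff\pi_\sim(e)\leq\pi_\sim(f)$; trivial edges are pinned down by the multi-complex axioms and contribute nothing new. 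Statements $(2)$, $(3)$ and the ``iff'' part of $(4)$ are verbatim the corresponding parts of Lemma~\ref{lem1.5}. For the final identity of $(4)$ one checks, exactly as in the $\subset$ case there, that both $E((C/\sim')\mid\overline{\sim})$ and $E((C\mid\sim)/\sim')$ equal, as ordered multisets, $\{\pi_{\sim'}(e)\mid e\in E(C),\ \supp(e)\text{ lies in one class of }\sim\}$, using $\sim\leqslant\sim'$.

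Granting this lemma, I would reproduce the four parts of the proof of Theorem~\ref{theo1.4}. For coassociativity, compute $(\delta_\sim\otimes\id)\circ\delta_{\sim'}(C)$ and $(\id\otimes\delta_{\sim'})\circ\delta_\sim(C)$ for $\sim\leqslant\sim'$, using $(1)$ on the first tensor factor and $(4)$ on the others, so the two agree. For multiplicativity, the connectedness condition kills every $\sim\neq\sim_X\sqcup\sim_Y$ for $C\in\calMC[X]$, $D\in\calMC[Y]$, and for $\sim=\sim_X\sqcup\sim_Y$ both $/\sim$ and $\mid\sim$ distribute over the product (using $E(CD)=E(C)\sqcup E(D)$ and that no edge of $CD$ meets both $X$ and $Y$), giving $\delta_\sim(CD)=\delta_{\sim_X}(C)\,\delta_{\sim_Y}(D)$. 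For compatibility with $\Delta$, for $C\in\calMC[X\sqcup Y]$ and $\sim=\sim_X\sqcup\sim_Y$ one has $(C/\sim)_{\mid X/\sim_X}=(C_{\mid X})/\sim_X$, $(C\mid\sim)_{\mid X}=C_{\mid X}\mid\sim_X$ (and symmetrically for $Y$), and $\sim\in\eq_c[C]$ iff $\sim_X\in\eq_c[C_{\mid X}]$ and $\sim_Y\in\eq_c[C_{\mid Y}]$, which yields $(\Delta\otimes\id)\circ\delta=m_{1,3,24}\circ(\delta\otimes\delta)\circ\Delta$ exactly as in Theorem~\ref{theo1.4} for $\leftthreetimes=\subset$. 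Finally I would take the counit $\epsilon_\delta(C)=1$ if $|\supp(e)|\leqslant 1$ for every $e\in E(C)$ and $\epsilon_\delta(C)=0$ otherwise: if $\sim\in\eq_c[C]$ has a class of size $\geqslant 2$ then that connected piece contains an edge of support $\geqslant 2$, so $\epsilon_\delta(C\mid\sim)=0$ unless $\sim$ is the equality of $X$ (for which $C/\sim=C$), giving $(\id\otimes\epsilon_\delta)\circ\delta=\id$; dually $\epsilon_\delta(C/\sim)=0$ unless every edge of $C$ is supported in one $\sim$-class, which together with $\sim\in\eq_c[C]$ forces $\sim$ to be the equivalence whose classes are the connected components of $C$ (for which $C\mid\sim=C$), giving $(\epsilon_\delta\otimes\id)\circ\delta=\id$.

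The point I expect to be the real obstacle is not any individual step but the cumulative care needed to transport the multiset multiplicities and the partial orders correctly through every identity --- in particular, verifying once and for all that the relation $\pi_\sim(e)\leq_{C/\sim}\pi_\sim(f)\iff e\leq_C f$ is a well-defined partial order obeying the multi-complex axioms (so that $C/\sim$ is genuinely a multi-complex), and that it is the order produced on both sides of $(C/\sim')\mid\overline{\sim}=(C\mid\sim)/\sim'$ in part $(4)$ of the lemma. After that setup, every remaining argument is the hypergraph one with ``$|e|\geqslant 2$'' replaced by ``$|\supp(e)|\geqslant 2$''.
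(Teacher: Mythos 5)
Your proposal is correct and follows exactly the route the paper takes: the paper's own proof is simply ``similar to the proof of Theorem \ref{theo1.4}'', and your argument carries that out, transporting Lemma \ref{lem1.5} and the four verifications (coassociativity, multiplicativity, compatibility with $\Delta$, counit) to multi-complexes while tracking multiplicities and the partial orders. The extra bookkeeping you flag (well-definedness of $\leq_{C/\sim}$ and the convention that only trivial edges keep multiplicity $1$) is indeed the only new content, and your handling of it, including the counit $\epsilon_\delta$ detecting edges of support of cardinality at least $2$, is consistent with the paper's definitions.
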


\begin{proof}
Similar to the proof of Theorem \ref{theo1.4}. 
\end{proof}

\section{Link with hypergraphs}

\begin{defi}
Let $C$ be a multi-complex. We define the hypergraph $\kappa(C)$ by
\begin{align*}
V(\kappa(C))&=V(C),&
E(\kappa(C))&=\supp\{\supp(e)\mid e\in E(C)\}.
\end{align*}
In other words, $\kappa(C)$ is obtained from $C$ by forgetting the partial order $\leq_C$ and the multiplicities
in the edges and in $E(C)$. This defines a species morphism $\kappa:\bfMC\longrightarrow \bfH$.
\end{defi}

The following is obtained by direct verifications:

\begin{prop}
$\kappa:(\bfMC,m,\Delta)\longrightarrow (\bfH,m,\Delta^{(\subset)})$ is a twisted bialgebra morphism.
Moreover, it is compatible with the contraction-extraction coproducts $\delta$ and $\delta^{(\subset)}$. 
\end{prop}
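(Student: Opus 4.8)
The plan is to reduce the entire statement to a single local identity: for every multi-complex $C\in\calMC[Y]$ and every subset $X\subseteq Y$,
\[\kappa(C_{\mid X})=\kappa(C)_{\mid_\subset X}.\]
This is immediate from the definitions, since the edges of $C_{\mid X}$ are the $e\in E(C)$ with $\supp(e)\subseteq X$, so the edges of $\kappa(C_{\mid X})$ are exactly the sets $\supp(e)$ with $e\in E(C)$ and $\supp(e)\subseteq X$, that is the edges of $\kappa(C)$ contained in $X$, and both sides have vertex set $X$. Granting this, I would first check that $\kappa$ is an algebra morphism: for disjoint $X,Y$ and $C\in\calMC[X]$, $D\in\calMC[Y]$ one has $E(CD)=E(C)\sqcup E(D)$, and since the supports coming from $C$ and from $D$ meet only in $\emptyset$, taking supports and forgetting multiplicities gives $E(\kappa(CD))=E(\kappa(C))\cup E(\kappa(D))=E(\kappa(C)\kappa(D))$. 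Then, since $\Delta_{X,Y}(C)=C_{\mid X}\otimes C_{\mid Y}$ and $\Delta^{(\subset)}_{X,Y}(G)=G_{\mid_\subset X}\otimes G_{\mid_\subset Y}$, the local identity yields $(\kappa\otimes\kappa)\circ\Delta=\Delta^{(\subset)}\circ\kappa$ directly; moreover $\kappa$ carries the empty multi-complex to the empty hypergraph, so it intertwines the counits. Naturality of $\kappa$ is part of the hypothesis that it is a species morphism. This settles the twisted bialgebra morphism statement.

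For the contraction-extraction coproducts I would proceed in three steps. First, a multi-complex $D$ and the hypergraph $\kappa(D)$ have literally the same paths, since $(x_0,\dots,x_k)$ is a path in $D$ iff for each $i$ some $e\in E(D)$ has $\{x_{i-1},x_i\}\subseteq\supp(e)$, iff some edge of $\kappa(D)$ contains $\{x_{i-1},x_i\}$; hence $D$ is connected iff $\kappa(D)$ is. Combined with the local identity applied to each class $\varpi$ of $\sim$ (so $\kappa(C)_{\mid_\subset\varpi}=\kappa(C_{\mid\varpi})$), this gives $\sim\in\eq_c[C]\iff\sim\in\eq_\subset[\kappa(C)]$ for every $\sim\in\eq[V(C)]$. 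Second, by multiplicativity of $\kappa$ and the local identity, $\kappa(C\mid\sim)=\prod_{\varpi}\kappa(C_{\mid\varpi})=\prod_{\varpi}\kappa(C)_{\mid_\subset\varpi}=\kappa(C)\mid_\subset\sim$. Third, $\kappa(C/\sim)=\kappa(C)/\sim$: both have vertex set $V(C)/\sim$, and since $\supp(\pi_\sim(e))=\pi_\sim(\supp(e))$, the edge set of $\kappa(C/\sim)$ is $\{\pi_\sim(\supp(e))\mid e\in E(C)\}=\{\pi_\sim(f)\mid f\in E(\kappa(C))\}$, which is the edge set of $\kappa(C)/\sim$. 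Putting these together, for $\sim\in\eq_c[C]$ both sides of $(\kappa\otimes\kappa)\circ\delta_\sim=\delta^{(\subset)}_\sim\circ\kappa$ evaluate to $\kappa(C)/\sim\otimes\kappa(C)\mid_\subset\sim$, while for $\sim\notin\eq_c[C]$ both vanish; summing over $\sim$ gives the compatibility. Finally, $\kappa(C)$ has a nontrivial edge iff $C$ has an edge whose support has at least two elements, so $\kappa$ intertwines the counits $\epsilon_\delta$ as well.

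The computation is entirely routine; the only point that requires a moment's care is the third step above, the commutation of $\kappa$ with the contraction $C\mapsto C/\sim$. There $E(C/\sim)$ is genuinely a multiset — the multiplicity of a class $\overline{e}$ being the sum of the multiplicities of the edges $e$ with $\pi_\sim(e)=\overline{e}$ — and one must observe that $\kappa$, which discards both the partial order and all multiplicities, collapses this bookkeeping to the plain set $\{\pi_\sim(f)\mid f\in E(\kappa(C))\}$, so that the potentially delicate multiplicity count in $C/\sim$ becomes invisible after applying $\kappa$. One should also first record that the counit of $(\bfMC,m,\delta)$ is, exactly as for $\bfH$, the linear form sending a multi-complex all of whose edges have support of cardinality at most $1$ to $1$, and every other multi-complex to $0$; this is what makes the last sentence of the proof meaningful.
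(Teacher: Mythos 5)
Your proposal is correct and amounts to exactly the direct verification the paper leaves to the reader: the key local identity $\kappa(C_{\mid X})=\kappa(C)_{\mid_\subset X}$, the preservation of paths and hence of connectedness, and the compatibilities $\kappa(C\mid\sim)=\kappa(C)\mid_\subset\sim$ and $\kappa(C/\sim)=\kappa(C)/\sim$ (where, as you note, forgetting multiplicities absorbs the multiset bookkeeping in $C/\sim$) are precisely the checks needed, and your handling of the counits is consistent with the counit constructed in the multi-complex analogue of Theorem \ref{theo1.4}. No gaps; this is the same argument, written out.
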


As a consequence, the unique double bialgebra morphism from $\calF[\bfMC]$ to $\K[X]$ is
$P_\subset \circ \calF[\kappa]$. \\

From \cite[Corollary 2.3]{Foissy40}:

\begin{cor}
Let us denote by $S$ the antipode of $(\calF[\bfMC],m,\Delta)$.
For any mutli-complex $C$,
\begin{align*}
S(C)&=\sum_{\sim\in \eq_{c}[C]}\left(\sum_{\mbox{\scriptsize $\leq$ acyclic orientation of $\kappa(C/\sim)$}} (-1)^{\cl(\leq)}\right)
C\mid \sim.
\end{align*}
\end{cor}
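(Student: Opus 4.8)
The plan is to combine the antipode formula for double bialgebras from \cite[Corollary 2.3]{Foissy40} with the bialgebra morphism $\kappa$ and the already-established formula for the antipode of $(\calF[\bfH],m,\Delta^{(\subset)})$ from Corollary \ref{cor2.13}. Recall that \cite[Corollary 2.3]{Foissy40} expresses the antipode $S$ of a connected graded double bialgebra $(B,m,\Delta,\delta)$ in terms of the unique polynomial invariant $P$: namely $S=(P_{\mid X=-1}\otimes \id)\circ \delta$. In our situation, $(\calF[\bfMC],m,\Delta,\delta)$ is a double bialgebra (by the previous theorem, and since the bosonic Fock functor sends double twisted bialgebras to double bialgebras), so $S=(Q_{\mid X=-1}\otimes \id)\circ \delta$, where $Q$ is its unique polynomial invariant.

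First I would identify $Q$. By the preceding remark in this section, the unique double bialgebra morphism from $\calF[\bfMC]$ to $\K[X]$ is $Q=P_\subset\circ \calF[\kappa]$, since $\kappa:(\bfMC,m,\Delta)\longrightarrow(\bfH,m,\Delta^{(\subset)})$ is a twisted bialgebra morphism compatible with the contraction-extraction coproducts, hence $\calF[\kappa]$ is a double bialgebra morphism, and composing with $P_\subset$ gives a double bialgebra morphism to $\K[X]$, which is unique. Therefore $Q_{\mid X=-1}(C)=P_\subset(\kappa(C))(-1)$ for any multi-complex $C$. By Theorem \ref{theo2.10}, this equals $\sum_{\leq}(-1)^{\cl(\leq)}$, the sum being over acyclic orientations $\leq$ of the hypergraph $\kappa(C)$.

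Next I would unfold $\delta$. By construction, $\delta(C)=\sum_{\sim\in\eq_c[C]}C/\sim\otimes C\mid\sim$. Applying $(Q_{\mid X=-1}\otimes\id)$ and substituting the identification of $Q_{\mid X=-1}$ gives
\[
S(C)=\sum_{\sim\in\eq_c[C]}\left(\sum_{\mbox{\scriptsize $\leq$ acyclic orientation of $\kappa(C/\sim)$}}(-1)^{\cl(\leq)}\right)C\mid\sim,
\]
which is exactly the claimed formula. The one point requiring a small check is that $Q_{\mid X=-1}$ applied to the left tensor factor $C/\sim$ is legitimately evaluated as $P_\subset(\kappa(C/\sim))(-1)$; this is immediate from $Q=P_\subset\circ\calF[\kappa]$ and the fact that $\kappa$ is applied factorwise under $\delta$.

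The main obstacle is essentially bookkeeping rather than a genuine difficulty: one must be careful that the double bialgebra hypotheses of \cite[Corollary 2.3]{Foissy40} apply to $\calF[\bfMC]$ — that is, that $(\calF[\bfMC],m,\Delta,\delta)$ is connected graded as a double bialgebra, which follows since $\Delta$ is graded by the number of vertices and $\delta$ is graded by $|V(C)|-\cc(C)$ exactly as in the proof of Proposition \ref{prop2.4} — and that the compatibility of $\kappa$ with the two coproducts (stated in the previous proposition) indeed yields that $\calF[\kappa]$ intertwines both coproducts, so that the composite $P_\subset\circ\calF[\kappa]$ is the polynomial invariant. Once these functoriality statements are invoked, the corollary is a one-line substitution.
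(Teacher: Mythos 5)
Your argument is exactly the route the paper takes: it invokes the antipode formula $S=(P_{\mid X=-1}\otimes \id)\circ \delta$ of \cite[Corollary 2.3]{Foissy40}, identifies the unique polynomial invariant of $\calF[\bfMC]$ as $P_\subset\circ \calF[\kappa]$ via the preceding proposition, and evaluates at $X=-1$ using Theorem \ref{theo2.10}, so the proposal is correct and matches the paper's proof.
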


By \cite[Corollary 4.5]{Foissy40}:

\begin{prop}
We define a map $\varpi$ on $\calF[\bfMC]$ by the following:
for any multi-complex $C$,
\[\varpi(C)=\sum_{\sim \in \eq_c[C]}
\left(\sum_{j\geq 0}(-1)^j N_{\kappa(C/\sim)}(1,j)\right)C\mid\sim.\]
Then $\varpi$ is the projector on the space $\prim(\calF[\bfMC])$ of primitive elements of $\calF[\bfMC]$ which vanishes on
$(1)\oplus \ker(\varepsilon)^2$ (eulerian idempotent). 
Consequently, a basis of $\prim(\calF[\bfMC])$ is given by
$(\varpi(C))_{\mbox{\scriptsize $C$ connected multi-complex}}$. 
\end{prop}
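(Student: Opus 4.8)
The plan is to mimic the proof of the corresponding statement for $\calF[\bfH]$ given earlier, transporting the relevant infinitesimal character along the double bialgebra morphism $\calF[\kappa]\colon\calF[\bfMC]\to\calF[\bfH]$. First I would record that $(\calF[\bfMC],m,\Delta)$ is a graded connected commutative Hopf algebra (graded by the number of vertices) and that $\calF[\kappa]$ respects this grading. Next, the $\delta$-counit $\epsilon_\delta$ takes the value $1$ on a multi-complex $C$ precisely when every edge of $C$ has support of cardinality at most $1$, i.e. precisely when $\kappa(C)$ has no nontrivial edge; this is exactly what makes the counit axioms for $\delta$ hold, by the argument used for Theorem~\ref{theo1.4} (contracting along the connected-components equivalence, resp. along the equality of vertices, only creates edges whose support is a single point, because the support of $\pi_\sim(e)$ is $\pi_\sim(\supp(e))$). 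Hence $\epsilon_\delta^{\bfMC}=\epsilon_\delta^{\bfH}\circ\calF[\kappa]$. Since $\calF[\kappa]$ is a morphism of graded connected Hopf algebras, precomposition with it is a unital algebra morphism for the convolution products attached to $\Delta$ and to $\Delta^{(\subset)}$, so it commutes with the logarithm and
\[\ln(\epsilon_\delta^{\bfMC})=\ln(\epsilon_\delta^{\bfH})\circ\calF[\kappa].\]

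By \cite[Proposition~4.1]{Foissy40}, the infinitesimal character $\ln(\epsilon_\delta^{\bfH})$ of $\calF[\bfH]$ is given on a hypergraph $G$ by $\sum_{j\geq 0}(-1)^jN_G(1,j)$, so the displayed identity yields $\ln(\epsilon_\delta^{\bfMC})(C)=\sum_{j\geq 0}(-1)^jN_{\kappa(C)}(1,j)$ for every multi-complex $C$. Then I would invoke \cite[Corollary~4.5]{Foissy40} for the double bialgebra $(\calF[\bfMC],m,\Delta,\delta)$: it identifies the Eulerian idempotent as $(\ln(\epsilon_\delta^{\bfMC})\otimes\id)\circ\delta$, and states that its values on a system of representatives of a basis of the indecomposable quotient $\ker(\varepsilon)/\ker(\varepsilon)^2$ form a basis of $\prim(\calF[\bfMC])$. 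Unfolding $\delta(C)=\sum_{\sim\in\eq_c[C]}C/\sim\otimes C\mid\sim$ and applying $\ln(\epsilon_\delta^{\bfMC})\otimes\id$ gives exactly the stated formula for $\varpi(C)$; and since the product is the disjoint union, the indecomposable isoclasses of multi-complexes are precisely the connected ones, which is the last assertion.

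The only step requiring genuine (if routine) care, and the main obstacle, is the identification $\epsilon_\delta^{\bfMC}=\epsilon_\delta^{\bfH}\circ\calF[\kappa]$ used above: the $\delta$-counit for multi-complexes is not written out explicitly in the text (the theorem merely says ``similar to the proof of Theorem~\ref{theo1.4}''), so one must pin down the correct definition and check both axioms $(\epsilon_\delta\otimes\id)\circ\delta=\id$ and $(\id\otimes\epsilon_\delta)\circ\delta=\id$. The observation that settles it is that contracting a connected sub-multi-complex destroys all nontrivial edges — a nontrivial edge $e$ would contribute $\pi_\sim(e)$, whose support collapses to a point whenever $\supp(e)$ lies in one class — exactly as for hypergraphs, so that $\epsilon_\delta$ depends only on the underlying hypergraph $\kappa(C)$. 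Everything else is a formal consequence of the cited results of \cite{Foissy40}.
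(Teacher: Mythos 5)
Your proposal is correct and follows essentially the same route as the paper, which (for the hypergraph analogue and, implicitly, here) simply combines \cite[Proposition 4.1]{Foissy40} with \cite[Corollary 4.5]{Foissy40} after transporting the infinitesimal character along the double bialgebra morphism $\calF[\kappa]$. The only things you add are made-explicit versions of what the paper leaves implicit, namely the identification $\epsilon_\delta^{\bfMC}=\epsilon_\delta^{\bfH}\circ \calF[\kappa]$ (value $1$ exactly when every edge has support of cardinality at most one, which is what the counit verification \emph{\`a la} Theorem \ref{theo1.4} forces, since contraction only collapses supports) and the compatibility of precomposition with the convolution logarithm, both of which are correct.
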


\bibliographystyle{amsplain}
\bibliography{biblio}

\end{document}